\numberwithin{equation}{subsection}
\newcommand{\sqsp}{\renewcommand{\baselinestretch}{1.15}\tiny\normalsize}
\newtheorem{theorem}[subsection]{Theorem}
\newtheorem{lemma}[subsection]{Lemma}
\newtheorem{proposition}[subsection]{Proposition}
\newtheorem{corollary}[subsection]{Corollary}
\theoremstyle{definition}
\newtheorem{definition}[subsection]{Definition}
\newtheorem{example}[subsection]{Example}
\newtheorem{remark}[subsection]{Remark}
\newcommand{\bk}{\mathbf{k}}
\newcommand{\bZ}{\mathbf{Z}}
\newcommand{\oct}{\mathbf{O}}
\newcommand{\xbar}{\overline{x}}
\newcommand{\ybar}{\overline{y}}
\newcommand{\muop}{\mu^{op}}
\newcommand{\cyclicsum}{{\circlearrowright\,}}
\newcommand{\bracket}{[,...\,,]}
\DeclareMathOperator{\Real}{Re}
\begin{document}

\title{On $n$-ary Hom-Nambu and Hom-Maltsev algebras}
\author{Donald Yau}

\begin{abstract}
Hom-alternative and Hom-Jordan algebras are shown to give rise to Hom-Nambu algebras of arities $2^{k+1} + 1$.  The class of $n$-ary Hom-Maltsev algebras is studied.  Multiplicative $n$-ary Hom-Nambu-Lie algebras are shown to be $n$-ary Hom-Maltsev algebras. Examples of ternary Hom-Maltsev algebras that are not ternary Hom-Nambu-Lie algebras are given.  Ternary Hom-Maltsev algebras are shown to arise from composition algebras.
\end{abstract}

\keywords{Hom-Nambu algebra, Hom-Nambu-Lie algebra, Hom-Jordan triple system, Hom-Lie triple system, Hom-Jordan algebra, Hom-alternative algebra, Hom-Maltsev algebra.}

\subjclass[2000]{17A40, 17A42, 17A75, 17C50, 17D05, 17D10}

\address{Department of Mathematics\\
    The Ohio State University at Newark\\
    1179 University Drive\\
    Newark, OH 43055, USA}
\email{dyau@math.ohio-state.edu}

\date{\today}
\maketitle

\sqsp

\section{Introduction}

Algebras with $n$-ary compositions of Lie and Jordan types are important in Lie and Jordan theories, geometry, analysis, and physics.  Most simple Lie algebras can be obtained from Jordan triple systems through the Kantor-Koecher-Tits construction \cite{kantor1,koecher,tits}.  On the other hand, Lie triple systems give rise to $\bZ/2\bZ$-graded Lie algebras \cite{jacobson1,lister}, and they are exactly the kind of Lie algebras associated to symmetric spaces.  In geometry and analysis, Jordan triple systems are used in the classification of symmetric spaces \cite{bertram,chu,kaup1,kaup2,loos1,loos2}.

In physics $n$-ary algebras appear in a number of different contexts.  For instance, Nambu mechanics \cite{nambu} involves $n$-ary Nambu algebras, in which the $n$-ary composition is a derivation.  This derivation property, called the $n$-ary Nambu identity, generalizes the Jacobi identity in Lie algebras.  In particular, Lie triple systems are ternary Nambu algebras with two further properties.  Lie triple systems can be used to construct solutions of the Yang-Baxter equation \cite{okubo}.  Superconformal algebras can be realized by Jordan triple systems \cite{gun,gh,gh2}.  An $n$-ary Nambu algebra whose product is anti-symmetric is called an $n$-ary Nambu-Lie algebra.   Ternary Nambu-Lie algebras with some extra structures appear in the work of Bagger and Lambert on $M$-branes \cite{bl}.  Other applications of $n$-ary algebras in physics are discussed in, e.g., \cite{bg1,bg2,kerner2,plet}.

Hom-type generalizations of $n$-ary Nambu(-Lie) algebras, called $n$-ary Hom-Nambu(-Lie) algebras, were introduced by Ataguema, Makhlouf, and Silvestrov in \cite{ams} .  Each $n$-ary Hom-Nambu(-Lie) algebra has $n-1$ linear twisting maps, which appear in a twisted generalization of the $n$-ary Nambu identity called the $n$-ary Hom-Nambu identity (see Definition \ref{def:homnambulie}).  If the twisting maps are all equal to the identity, one recovers an $n$-ary Nambu(-Lie) algebra.  The twisting maps provide a substantial amount of freedom in manipulating  Nambu(-Lie) algebras.  For example, in \cite{ams} it is demonstrated that some ternary Nambu-Lie algebras can be regarded as ternary Hom-Nambu-Lie algebras with non-identity twisting maps.


Binary Hom-Nambu-Lie algebras, usually called Hom-Lie algebras, originated in \cite{hls} in the study of $q$-deformations of the Witt and the Virasoro algebras.  Hom-associative algebra was defined in \cite{ms}.  Hom-Lie algebras are to Hom-associative algebras as Lie algebras are to associative algebras \cite{ms,yau}.  Other Hom-type algebras can be found in \cite{mak} - \cite{ms4} and \cite{yau3,yau4,yau12,yau13}.  Hom-quantum groups and the Hom-Yang-Baxter equations have been studied by the author in \cite{yau5} - \cite{yau11}.

Let us recall several known constructions of $n$-ary Hom-Nambu(-Lie) algebras.  In \cite{ams} it is shown that $n$-ary Nambu(-Lie) algebras can be twisted along self-morphisms to yield $n$-ary Hom-Nambu(-Lie) algebras.  This twisting construction is a generalization of a result about $G$-Hom-associative algebras due to the author \cite{yau2}.  In \cite{ams1} it is shown that ternary Virasoro-Witt algebras \cite{cfz} can be $q$-deformed into ternary Hom-Nambu-Lie algebras.  In \cite{ams2} it is shown that a ternary Hom-Nambu-Lie algebra can be obtained from a Hom-Lie algebra together with a compatible linear map and a trace function.

Some further properties of $n$-ary Hom-Nambu(-Lie) algebras were established by the author in \cite{yau13}.  A unique feature of Hom-type algebras is that they are closed under twisting by suitably defined self-morphisms.  For example, it is shown in \cite{yau13} that the category of $n$-ary Hom-Nambu(-Lie) algebras is closed under twisting by self-weak morphisms.  If one begins with an $n$-ary Nambu(-Lie) algebra, then this closure property reduces to the twisting construction for $n$-ary Hom-Nambu(-Lie) algebras in \cite{ams}.  Therefore, conceptually one can regard the category of $n$-ary Hom-Nambu(-Lie) algebras as an extension of the category of $n$-ary Nambu(-Lie) algebras that has this closure property.

Furthermore, it is proved in \cite{yau13} that every multiplicative $n$-ary Hom-Nambu algebra yields a sequence of Hom-Nambu algebras of exponentially higher arities (see Theorem \ref{thm:higher}).  Going in the other direction, it is shown in \cite{yau13} that, under suitable conditions, an $n$-ary Hom-Nambu(-Lie) algebra reduces to an $(n-1)$-ary Hom-Nambu(-Lie) algebra.  Hom-Jordan and Hom-Lie triple systems were also defined in \cite{yau13}, which generalize Jordan and Lie triple systems \cite{jacobson1,lister,meyberg}.  A Hom-Lie triple system is automatically a ternary Hom-Nambu algebra, but it is usually not a ternary Hom-Nambu-Lie algebra because its ternary product is not assumed to be anti-symmetric.  It is proved in \cite{yau13} that Hom-Lie triple systems, and hence ternary Hom-Nambu algebras, can be obtained from Hom-Jordan triple systems (see Theorem \ref{thm:jtslts}), ternary totally Hom-associative algebras \cite{ams}, multiplicative Hom-Lie algebras, and Hom-associative algebras.


There are two main purposes in this paper.  First, we show that Hom-alternative and Hom-Jordan algebras yield Hom-Nambu algebras of arities $2^{k+1} + 1$.   In fact, we show that multiplicative Hom-Jordan algebras have underlying Hom-Jordan triple systems, generalizing a well-known fact about Jordan algebras and Jordan triple systems.  Then we combine it with results from \cite{yau12,yau13} to realize Hom-Nambu algebras using Hom-alternative and Hom-Jordan algebras.  These results provide a large class of Hom-Nambu algebras that are usually not Hom-Nambu-Lie algebras.  Second, we introduce and study $n$-ary Hom-Maltsev algebras, which include multiplicative $n$-ary Hom-Nambu-Lie algebras.  A description of the rest of this paper follows.

In section \ref{sec:prelim} we recall some properties of Hom-Jordan algebras and Hom-Jordan triple systems.  In particular, we recall from \cite{yau12} that multiplicative Hom-alternative algebras are Hom-Jordan admissible (Theorem \ref{thm:homalt}).  Also we recall from \cite{yau13} that Hom-Jordan triple systems give rise to Hom-Lie triple systems through the Meyberg construction (Theorem \ref{thm:jtslts}).

It is well-known that Jordan algebras have underlying Jordan triple systems (Corollary \ref{cor1:hj}).  In section \ref{sec:homjordan} we show that there is a Hom-type analogue of this fact, i.e., that multiplicative Hom-Jordan algebras have underlying multiplicative Hom-Jordan triple systems (Theorem \ref{thm:hjhjts}).  The proof of this result is an intricate calculation involving the left multiplication operators in a Hom-Jordan algebra.

It is observed in section \ref{sec:homn} that multiplicative Hom-Jordan algebras have underlying multiplicative Hom-Lie triple systems, and hence multiplicative ternary Hom-Nambu algebras (Corollary \ref{cor2:hj}). Furthermore, every multiplicative Hom-Jordan algebra gives rise to a sequence of multiplicative Hom-Nambu algebras of arities $2^{k+1}+1$ (Corollary \ref{cor1:higher}).  The author showed in \cite{yau12} that multiplicative Hom-alternative algebras are Hom-Jordan admissible.  It follows that multiplicative Hom-alternative algebras also yield multiplicative Hom-Jordan triple systems (Corollary \ref{cor3:hj}) and multiplicative Hom-Nambu algebras of arities $2^{k+1}+1$ (Corollaries \ref{cor4:hj} and \ref{cor2:higher}).

In section \ref{sec:maltsev} we define $n$-ary Hom-Maltsev algebras. They include Pozhidaev's $n$-ary Maltsev algebras \cite{poz} (when the twisting maps are all equal to the identity) and the author's Hom-Maltsev algebras \cite{yau12} (when $n=2$).  Maltsev algebras were introduced by Maltsev \cite{maltsev}, who called them Moufang-Lie algebras.  Lie algebras are examples of Maltsev algebras, but there are plenty of non-Lie Maltsev algebras \cite{myung}.  Maltsev algebras are important in the geometry of smooth loops \cite{kerdman,kuzmin,maltsev,nagy,sabinin}.  We show that the category of $n$-ary Hom-Maltsev algebras is closed under twisting by self-morphisms (Theorem \ref{thm1:hommaltsev}).  Then we show that ternary Hom-Maltsev algebras can be constructed from composition algebras with involution (Corollary \ref{cor3:hommaltsev}).  As an example, we show that the octonion algebra carries many ternary Hom-Maltsev algebra structures that are not ternary Hom-Nambu-Lie algebras (Example \ref{ex:octonion}).

Two further properties of $n$-ary Hom-Maltsev algebras are established in section \ref{sec:homnambu}.  These results are Hom-type generalizations of Pozhidaev's results \cite{poz} about $n$-ary Maltsev algebras.  First,  we show that every multiplicative $n$-ary Hom-Nambu-Lie algebra is also an $n$-ary Hom-Maltsev algebra (Theorem \ref{thm:liemaltsev}).   Second, we show that under some conditions $n$-ary Hom-Maltsev algebras give rise to $(n-k)$-ary Hom-Maltsev algebras (Theorem \ref{thm2:reduction}).  Using this result, we show how Hom-Maltsev algebras arise from composition algebras with involution (Corollaries \ref{cor2:reduction} and \ref{cor3:reduction}).

\section{Hom-Jordan algebras and Hom-Jordan triple systems}
\label{sec:prelim}

The purpose of this section is to recall some basic properties of Hom-Jordan algebras \cite{yau12} and Hom-Jordan triple systems \cite{yau13}.

\subsection{Conventions}

Throughout this paper we work over a fixed field $\bk$ of characteristic $0$.  The dimension of a $\bk$-module $V$ is denoted by $\dim(V)$.  If $V$ is a $\bk$-module and $f \colon V \to V$ is a linear map, then $f^n$ denotes the composition of $n$ copies of $f$ with $f^0 = Id$.  For $i \leq j$, elements $x_i,\ldots,x_j \in V$ and maps $f,f_k,\ldots,f_l \colon V \to V$ with $j-i=l-k$, we use the abbreviations
\begin{equation}
\label{xij}
\begin{split}
x_{i,j} &= (x_i,x_{i+1},\ldots,x_j),\\
f(x_{i,j}) &= (f(x_i), f(x_{i+1}),\ldots, f(x_j)),\\
f_{k,l}(x_{i,j}) &= (f_k(x_i),f_{k+1}(x_{i+1}),\ldots,f_l(x_j)).
\end{split}
\end{equation}
For $i > j$, the symbols $x_{i,j}$, $f(x_{i,j})$, and $f_{k,l}(x_{i,j})$ denote the empty sequence.  For a bilinear map $\mu \colon V^{\otimes 2} \to V$, we often write $\mu(x,y)$ as the juxtaposition $xy$.  Denote by $\muop$ the opposite map, $\muop(x,y) = \mu(y,x)$.


Let us begin with the following basic definitions.

\begin{definition}
\label{def:nhomalgebra}
Let $n \geq 2$ be an integer.
\begin{enumerate}
\item
An \textbf{$n$-ary Hom-algebra} $(V,\bracket,\alpha)$ with $\alpha=(\alpha_1,\ldots,\alpha_{n-1})$ \cite{ams} consists of a $\bk$-module $V$, an $n$-linear map $\bracket \colon V^{\otimes n} \to V$, and linear maps $\alpha_i \colon V \to V$ for $i = 1, \ldots , n-1$, called the \textbf{twisting maps}.
\item
An $n$-ary Hom-algebra $(V,\bracket,\alpha)$ is said to be \textbf{multiplicative} if the twisting maps are all equal, i.e., $\alpha_1 = \cdots = \alpha_{n-1} = \alpha$, and $\alpha \circ \bracket = \bracket \circ \alpha^{\otimes n}$.
\item
A \textbf{weak morphism} $f \colon V \to U$ of $n$-ary Hom-algebras is a linear map of the underlying $\bk$-modules such that $f \circ \bracket_V = \bracket_U \circ f^{\otimes n}$.  A \textbf{morphism} of $n$-ary Hom-algebras is a weak morphism such that $f \circ (\alpha_i)_V = (\alpha_i)_U \circ f$ for $i = 1, \ldots n-1$ \cite{ams}.
\item
The $n$-ary product $\bracket$ in an $n$-ary Hom-algebra $V$ is said to be \textbf{anti-symmetric} if
\[
[x_1,\ldots,x_n] = \epsilon(\sigma)[x_{\sigma(1)},\ldots,x_{\sigma(n)}]
\]
for all $x_i \in V$ and permutations $\sigma$ on $n$ letters, where $\epsilon(\sigma)$ is the signature of $\sigma$.
\end{enumerate}
\end{definition}

An $n$-ary Hom-algebra $V$ in which all the twisting maps are equal, as in the multiplicative case, will be denoted by $(V,\bracket,\alpha)$, where $\alpha$ is the common value of the twisting maps.  An $n$-ary Hom-algebra is called \textbf{binary} or \textbf{ternary} if $n=2$ or $3$, respectively.  An \textbf{$n$-ary algebra} in the usual sense is a $\bk$-module $V$ with an $n$-linear map $\bracket \colon V^{\otimes n} \to V$.  We consider an $n$-ary algebra $(V,\bracket)$ also as an $n$-ary Hom-algebra $(V,\bracket,Id)$ in which all $n-1$ twisting maps are the identity map.  Also, in this case a weak morphism is the same thing as a morphism, which agrees with the usual definition of a morphism of $n$-ary algebras.


Next we recall the Hom-type generalizations of associative, alternative, Jordan, and flexible algebras.

\begin{definition}
\label{def:homalg}
\begin{enumerate}
\item
A \textbf{Hom-algebra} is a binary Hom-algebra $(A,\mu,\alpha)$.
\item
The \textbf{Hom-associator} of a Hom-algebra $(A,\mu,\alpha)$ \cite{ms} is the trilinear map $as_A \colon A^{\otimes 3} \to A$ defined as
\begin{equation}
\label{homassociator}
as_A = \mu \circ (\mu \otimes \alpha - \alpha \otimes \mu).
\end{equation}
A \textbf{Hom-associative algebra} \cite{ms} is a Hom-algebra whose Hom-associator is equal to $0$.
\item
A \textbf{Hom-alternative algebra} \cite{mak} is a Hom-algebra whose Hom-associator is anti-symmetric.
\item
A \textbf{Hom-Jordan algebra} \cite{yau12} is a Hom-algebra $(A,\mu,\alpha)$ such that $\mu = \muop$ (commutativity) and the \textbf{Hom-Jordan identity}
\begin{equation}
\label{homjordanid}
as_A(x^2,\alpha(y),\alpha(x)) = 0
\end{equation}
is satisfied for all $x,y \in A$, where $x^2 = \mu(x,x)$.
\item
A \textbf{Hom-flexible algebra} \cite{ms} is a Hom-algebra $(A,\mu,\alpha)$ that satisfies
\[
as_A(x,y,x) = 0
\]
for all $x,y \in A$.
\end{enumerate}
\end{definition}

It follows immediately from the definitions that Hom-associative algebras are also Hom-alternative algebras, which are themselves Hom-flexible algebras.  Also, the commutativity of the multiplication implies that Hom-Jordan algebras are also Hom-flexible algebras.

Construction results and examples of Hom-associative, Hom-alternative, and Hom-Jordan algebras can be found in \cite{yau2}, \cite{mak}, and \cite{yau12}, respectively.

\begin{remark}
In \cite{mak} Makhlouf defined a Hom-Jordan algebra as a commutative Hom-algebra satisfying $as_A(x^2,y,\alpha(x)) = 0$, which becomes our Hom-Jordan identity \eqref{homjordanid} if $y$ is replaced by $\alpha(y)$. Using Makhlouf's definition of a Hom-Jordan algebra, Hom-alternative algebras are not Hom-Jordan admissible, although Hom-associative algebras are still Hom-Jordan admissible \cite{mak}.
\end{remark}

It is well-known that alternative algebras are Jordan admissible \cite{schafer}.  Using the definition in \eqref{homjordanid}, the Hom-version is also true.  In fact, multiplicative Hom-alternative algebras are Hom-Jordan admissible in the following sense.

\begin{theorem}[\cite{yau12}]
\label{thm:homalt}
Let $(A,\mu,\alpha)$ be a multiplicative Hom-alternative algebra.  Then $A$ is Hom-Jordan admissible, in the sense that the Hom-algebra
\[
A^+ = (A,\ast,\alpha)
\]
is a multiplicative Hom-Jordan algebra, where $\ast = (\mu + \muop)/2$.
\end{theorem}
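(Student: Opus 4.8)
The goal is to show that if $(A,\mu,\alpha)$ is multiplicative Hom-alternative, then $A^+=(A,\ast,\alpha)$ with $\ast=(\mu+\muop)/2$ is a multiplicative Hom-Jordan algebra. Commutativity of $\ast$ is immediate from its definition, and multiplicativity of $A^+$ (i.e. $\alpha\circ\ast=\ast\circ\alpha^{\otimes 2}$) follows at once from the multiplicativity of $A$, since $\alpha\circ\mu=\mu\circ\alpha^{\otimes 2}$ forces $\alpha\circ\muop=\muop\circ\alpha^{\otimes 2}$, and $\ast$ is a linear combination of $\mu$ and $\muop$. So the entire content is the Hom-Jordan identity $as_{A^+}(x^{\ast 2},\alpha(y),\alpha(x))=0$, where $x^{\ast 2}=\ast(x,x)=\mu(x,x)=x^2$ (the factor $1/2$ in $\ast$ cancels when both arguments coincide, using $\mu=\muop$ is \emph{not} needed here: $\ast(x,x)=\tfrac12(\mu(x,x)+\mu(x,x))=x^2$).

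**Key steps.** First I would unwind $as_{A^+}$ in terms of $\mu$. Writing $z=x^2$, $u=\alpha(y)$, $v=\alpha(x)$, we have $8\,as_{A^+}(z,u,v)$ expanded as a sum of eight terms of the form $(\mu^{\pm})(\,(\mu^{\pm})(\cdot,\cdot),\alpha(\cdot)\,) - (\mu^{\pm})(\alpha(\cdot),(\mu^{\pm})(\cdot,\cdot)\,)$, i.e. a combination of ordinary Hom-associators $as_A$ with various orderings of $z,u,v$. The standard linearization trick for Jordan-admissibility of alternative algebras is to replace the single variable $x$ (appearing squared) by a general element, or rather to polarize $x^2$; but here I would instead work directly, grouping the eight terms into Hom-associators $as_A(-,-,-)$ evaluated at permutations of $(z,u,v)$ and their reversals. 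Using that $as_A$ is anti-symmetric (the Hom-alternative hypothesis), every $as_A$ at a permutation $\sigma$ of a fixed triple equals $\epsilon(\sigma)$ times $as_A$ at the identity permutation, and $as_A$ of a reversed triple $(c,b,a)$ relates to $-as_A(a,b,c)$ via Hom-flexibility $as_A(a,b,a)=0$ together with anti-symmetry. Collecting, the combination should reduce to a multiple of $as_A(x^2,\alpha(y),\alpha(x))$ plus terms that vanish by Hom-flexibility. The last ingredient is that a multiplicative Hom-alternative algebra satisfies the Hom-Jordan identity $as_A(x^2,\alpha(y),\alpha(x))=0$ for its \emph{own} product $\mu$ — this is the genuinely alternative-algebra input and is presumably the lemma proved in \cite{yau12} that underlies this theorem; I would invoke it (or reprove the needed special case: in a Hom-alternative algebra, $x\mapsto x^2$ and $x\mapsto\alpha(x)$ interact so that the linearized associator $as_A(x^2,-,\alpha(x))$ is alternating in enough arguments to force vanishing in characteristic $0$).

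**Main obstacle.** The hard part will be the bookkeeping in the eight-term expansion: keeping track of which of $\mu$ or $\muop$ sits in the inner versus outer slot, and correctly converting each piece into a signed ordinary Hom-associator. The conversions rely on \emph{both} anti-symmetry of $as_A$ (to reorder) and Hom-flexibility $as_A(x,y,x)=0$ (to kill the "diagonal" terms that appear when two of $z,u,v$ collapse under the substitution $z=x^2$, $v=\alpha(x)$ — note $v$ and the square $z$ share the variable $x$, so after polarizing one gets associators with a repeated argument up to applying $\alpha$, and multiplicativity $\alpha\circ\mu=\mu\circ\alpha^{\otimes2}$ is exactly what lets us pull $\alpha$'s out so that Hom-flexibility applies). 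I would organize the computation by first proving the purely multilinear identity
\[
2\,as_{A^+}(a,b,c) = as_A(a,b,c) + as_A(c,b,a)
\]
for all $a,b,c$ (a direct consequence of Hom-flexibility of $A$, since the cross terms $as_A(a,c,b)+\cdots$ cancel), and then specializing $a=x^2$, $b=\alpha(y)$, $c=\alpha(x)$ and using that both $as_A(x^2,\alpha(y),\alpha(x))$ and its reversal $as_A(\alpha(x),\alpha(y),x^2)$ vanish — the first by the Hom-alternative version of the Jordan identity, the second by anti-symmetry reducing it to $\pm as_A(x^2,\alpha(y),\alpha(x))$ as well. This reduces the theorem to the single identity $as_A(x^2,\alpha(y),\alpha(x))=0$ in a multiplicative Hom-alternative algebra, which I would either cite from \cite{yau12} or derive by linearizing the Hom-alternative laws.
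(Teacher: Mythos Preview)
The paper does not contain a proof of this theorem; it is stated with the citation \cite{yau12} and simply recalled for later use, so there is no argument here to compare your proposal against.

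That said, your proposed intermediate identity
\[
2\,as_{A^+}(a,b,c) \;=\; as_A(a,b,c) + as_A(c,b,a)
\]
is incorrect, and the error is not cosmetic. In a Hom-alternative algebra the Hom-associator $as_A$ is alternating, so $as_A(c,b,a)=-as_A(a,b,c)$ and your right-hand side is identically zero; your formula would then force $as_{A^+}\equiv 0$, i.e.\ $A^+$ Hom-associative. This is false already for the octonions with $\alpha=Id$: $\oct^+$ is a (non-associative) Jordan algebra. The reason your derivation breaks is that the eight-term expansion of $4\,as_{A^+}(a,b,c)$ does \emph{not} split cleanly into Hom-associators; a direct computation gives
\[
4\,as_{A^+}(a,b,c)=\sum_{\sigma}\pm\,as_A\bigl(\sigma(a),\sigma(b),\sigma(c)\bigr)\;+\;[\alpha(b),[a,c]],
\]
with a genuine commutator remainder $[\alpha(b),[a,c]]$ that does not vanish by Hom-flexibility or anti-symmetry. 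Your plan can be repaired: under the specialization $a=x^2$, $b=\alpha(y)$, $c=\alpha(x)$ one has $[x^2,\alpha(x)]=0$ (from $as_A(x,x,x)=0$), so the commutator term drops out and the associator sum collapses to a nonzero multiple of $as_A(x^2,\alpha(y),\alpha(x))$. Thus the whole theorem does reduce, as you anticipated, to proving $as_A(x^2,\alpha(y),\alpha(x))=0$ in a multiplicative Hom-alternative algebra --- but that is itself the substantive step (a Hom-analogue of the Moufang/Artin-type identities), and you would need to supply it rather than defer it, since the present paper provides no proof to fall back on.
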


The Hom-algebra $A^+ = (A,\ast,\alpha)$ in Theorem \ref{thm:homalt} is called the \textbf{plus Hom-algebra} of $A$.  Note that $x^2$ is the same whether we use $\mu$ or the Jordan product $\ast$, since $\mu(x,x) = x \ast x$.

Actually, a multiplicative Hom-alternative algebra is also Hom-Maltsev admissible, in the sense that the commutator Hom-algebra
\[
A^- = (A,[,]=\mu - \muop,\alpha)
\]
is a Hom-Maltsev algebra \cite{yau12}.  However, we do not need this result in this paper.  Higher arity generalizations of Hom-Maltsev algebras will be discussed in later sections.

Using Hom-flexibility and commutativity, the Hom-Jordan identity \eqref{homjordanid} can be linearized in the following way.

\begin{proposition}[\cite{yau12}]
\label{prop:homjordanid}
Let $(A,\mu,\alpha)$ be a Hom-Jordan algebra.  Then
\begin{equation}
\label{homjordanid2}
\cyclicsum_{x,w,z} \,as_A(\alpha(x),\alpha(y),wz) = 0
\end{equation}
for all $w,x,y,z \in A$, where $\cyclicsum_{x,w,z}$ denotes the cyclic sum over $(x,w,z)$.
\end{proposition}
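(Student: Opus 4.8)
The plan is to derive \eqref{homjordanid2} by fully polarizing the Hom-Jordan identity \eqref{homjordanid} in the variable $x$ and then moving the inner products into the third argument of the Hom-associator by means of Hom-flexibility. The variable $x$ occurs in \eqref{homjordanid} with total degree three --- degree two through $x^2$ and degree one through $\alpha(x)$ --- so the linearization is carried out in two successive steps, and since $\bk$ has characteristic $0$ we may freely divide by the integer coefficients that appear.

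First I would substitute $x \mapsto x + w$ in \eqref{homjordanid}, expand $(x+w)^2 = x^2 + 2xw + w^2$ using commutativity, and use the bilinearity of $as_A$ in its first and third slots. The terms $as_A(x^2,\alpha(y),\alpha(x))$ and $as_A(w^2,\alpha(y),\alpha(w))$ vanish by \eqref{homjordanid} itself, and what remains is the partially linearized identity
\[
as_A(x^2,\alpha(y),\alpha(w)) + 2\,as_A(xw,\alpha(y),\alpha(x)) + 2\,as_A(xw,\alpha(y),\alpha(w)) + as_A(w^2,\alpha(y),\alpha(x)) = 0.
\]
Regarding the left-hand side as a function $P(x)$ of $x$ (with $w$ and $y$ held fixed), I would then form $P(x+z) - P(x) - P(z)$; after the cancellations every surviving term carries a factor of $2$, and dividing by $2$ yields the fully multilinear identity
\[
as_A(xz,\alpha(y),\alpha(w)) + as_A(xw,\alpha(y),\alpha(z)) + as_A(zw,\alpha(y),\alpha(x)) = 0.
\]

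To conclude, recall that Hom-Jordan algebras are Hom-flexible, so $as_A(u,v,u) = 0$ for all $u,v$; polarizing this in $u$ shows that $as_A$ is skew in its first and third arguments, $as_A(a,v,b) = -\,as_A(b,v,a)$. Applying this to each of the three Hom-associators in the multilinear relation above interchanges $\alpha(x)$, $\alpha(w)$, $\alpha(z)$ with the products $zw$, $xz$, $xw$ at the cost of a common sign, and then commutativity lets us rewrite $zw$, $xz$, $xw$ as $wz$, $zx$, $xw$; the result is precisely $\cyclicsum_{x,w,z} as_A(\alpha(x),\alpha(y),wz) = 0$. I expect the only delicate point to be the bookkeeping in the two-step polarization: one must keep straight which partially linearized identities get subtracted off and verify that each remaining coefficient is the expected power of $2$, so that the divisions are legitimate over a field of characteristic $0$.
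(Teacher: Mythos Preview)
Your argument is correct and follows exactly the approach the paper indicates (and attributes to \cite{yau12}): linearize the Hom-Jordan identity \eqref{homjordanid} in the cubic variable $x$, then use Hom-flexibility (equivalently, the skew-symmetry of $as_A$ in its outer arguments) and commutativity of $\mu$ to recast the result as the cyclic sum \eqref{homjordanid2}. The paper does not spell out the polarization details here, but your two-step linearization and the coefficient bookkeeping are exactly what is needed.
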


We call \eqref{homjordanid2} the \textbf{linearized Hom-Jordan identity}.

Next we recall some ternary Hom-algebras from \cite{yau13}.


\begin{definition}
\label{def:hjts}
\begin{enumerate}
\item
A \textbf{Hom-triple system} is a ternary Hom-algebra $(V,\{,,\},\alpha=(\alpha_1,\alpha_2))$.
\item
A \textbf{Hom-Jordan triple system} is a Hom-triple system $(J,\{,,\},\alpha)$ that satisfies
\begin{equation}
\label{outersymmetry}
\{xyz\} = \{zyx\} \quad\text{(outer-symmetry)}
\end{equation}
and the \textbf{Hom-Jordan triple identity}
\begin{equation}
\label{homjtsid}
\begin{split}
\{\alpha_1(x)\alpha_2(y)\{uvw\}\} &- \{\alpha_1(u)\alpha_2(v)\{xyw\}\}\\
&= \{\{xyu\}\alpha_1(v)\alpha_2(w)\} - \{\alpha_1(u)\{yxv\}\alpha_2(w)\}
\end{split}
\end{equation}
for all $u,v,w,x,y \in J$.
\item
A \textbf{Hom-Lie triple system} is a Hom-triple system $(L,[,,],\alpha)$ that satisfies
\begin{equation}
\label{homltsid}
\begin{split}
[uvw] &= -[vuw] \quad\text{(left anti-symmetry)},\\
0 &= [uvw] + [wuv] + [vwu] \quad\text{(ternary Jacobi identity)},
\end{split}
\end{equation}
and the \textbf{ternary Hom-Nambu identity} \cite{ams}
\begin{equation}
\label{homnambu}
[\alpha_1(x)\alpha_2(y)[uvw]] = [[xyu]\alpha_1(v)\alpha_2(w)] + [\alpha_1(u)[xyv]\alpha_2(w)] + [\alpha_1(u)\alpha_2(v)[xyw]]
\end{equation}
for all $u,v,w,x,y \in J$.
\end{enumerate}
\end{definition}

When the twisting maps $\alpha_i$ are both equal to the identity map, we recover the usual notions of a \textbf{Jordan triple system} \cite{meyberg} and a \textbf{Lie triple system} \cite{jacobson1,lister}.  So Jordan and Lie triple systems are examples of multiplicative Hom-Jordan and Hom-Lie triple systems, respectively.  In the special case when both twisting maps are the identity map, we call \eqref{homjtsid} the \textbf{Jordan triple identity}.

Construction results and examples of Hom-Jordan and Hom-Lie triple systems can be found in \cite{yau13}.  In particular, Hom-Lie triple systems can be constructed from Hom-Jordan triple systems, ternary totally Hom-associative algebras \cite{ams}, Hom-associative algebras, and Hom-Lie algebras.

Generalizing an observation of Meyberg \cite{meyberg2} (XI Theorem I), one can show that Hom-Jordan triple systems give rise to Hom-Lie triple systems in the following way.

\begin{theorem}[\cite{yau13}]
\label{thm:jtslts}
Let $(J,\{,,\},\alpha)$ be a Hom-Jordan triple system with equal twisting maps.  Define the triple product
\begin{equation}
\label{hjtshlts}
[xyz] = \{xyz\} - \{yxz\}
\end{equation}
for $x,y,z \in J$.  Then $L(J) = (J,[,,],\alpha)$ is a Hom-Lie triple system.  Moreover, if $J$ is multiplicative, then so is $L(J)$.
\end{theorem}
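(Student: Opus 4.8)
The plan is to verify, in turn, the three defining conditions of a Hom-Lie triple system for $L(J) = (J,[,,],\alpha)$ — left anti-symmetry, the ternary Jacobi identity, and the ternary Hom-Nambu identity \eqref{homnambu} — together with the multiplicativity claim. The first two conditions and multiplicativity are formal; the real content is in the Hom-Nambu identity, where the Hom-Jordan triple identity \eqref{homjtsid} does the work.

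Left anti-symmetry $[uvw] = -[vuw]$ is immediate from the definition \eqref{hjtshlts}, since interchanging the first two slots interchanges $\{uvw\}$ and $\{vuw\}$. For the ternary Jacobi identity I would expand
\begin{equation*}
[uvw] + [wuv] + [vwu] = \{uvw\} - \{vuw\} + \{wuv\} - \{uwv\} + \{vwu\} - \{wvu\}
\end{equation*}
and cancel in pairs using outer-symmetry \eqref{outersymmetry}: $\{uvw\} = \{wvu\}$, $\{vuw\} = \{wuv\}$, and $\{uwv\} = \{vwu\}$. Multiplicativity is equally quick: since $\alpha\{xyz\} = \{\alpha(x)\alpha(y)\alpha(z)\}$ and $\alpha$ is linear, $\alpha[xyz] = \alpha\{xyz\} - \alpha\{yxz\} = \{\alpha(x)\alpha(y)\alpha(z)\} - \{\alpha(y)\alpha(x)\alpha(z)\} = [\alpha(x)\alpha(y)\alpha(z)]$, so $L(J)$ is multiplicative whenever $J$ is.

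The main step is the ternary Hom-Nambu identity \eqref{homnambu}. I would first rewrite the Hom-Jordan triple identity \eqref{homjtsid} (with equal twisting maps $\alpha_1 = \alpha_2 = \alpha$) in the solved form
\begin{equation*}
\{\alpha(x)\alpha(y)\{uvw\}\} = \{\alpha(u)\alpha(v)\{xyw\}\} + \{\{xyu\}\alpha(v)\alpha(w)\} - \{\alpha(u)\{yxv\}\alpha(w)\}. \tag{$\star$}
\end{equation*}
Expanding the left-hand side of \eqref{homnambu} via \eqref{hjtshlts} on both the inner and the outer bracket gives
\begin{equation*}
[\alpha(x)\alpha(y)[uvw]] = \{\alpha(x)\alpha(y)\{uvw\}\} - \{\alpha(x)\alpha(y)\{vuw\}\} - \{\alpha(y)\alpha(x)\{uvw\}\} + \{\alpha(y)\alpha(x)\{vuw\}\}.
\end{equation*}
I then apply $(\star)$ to each of these four terms — to the first as written, to the second with $u \leftrightarrow v$, to the third with $x \leftrightarrow y$, to the fourth with both interchanges — obtaining twelve monomials. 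I expect these to split into three groups: the four monomials of the shape $\{\alpha(-)\alpha(-)\{xy-\}\}$ or $\{\alpha(-)\alpha(-)\{yx-\}\}$ recombine, using \eqref{hjtshlts} twice, into $[\alpha(u)\alpha(v)[xyw]]$; and the remaining eight, all carrying $\alpha(w)$ in the last slot and an inner bracket $\{xy-\}$ or $\{yx-\}$ in the first or the middle slot, recombine into $[[xyu]\alpha(v)\alpha(w)] + [\alpha(u)[xyv]\alpha(w)]$. Summing the three groups produces exactly the right-hand side of \eqref{homnambu}.

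The only genuine obstacle is the bookkeeping in this last step: one must apply $(\star)$ with the correct substitutions, track all signs, and check that the twelve monomials regroup precisely as the four $\{,,\}$-terms coming from $[[xyu]\alpha(v)\alpha(w)] + [\alpha(u)[xyv]\alpha(w)]$ together with the two from $[\alpha(u)\alpha(v)[xyw]]$. It is worth noting that beyond $(\star)$ and the definition \eqref{hjtshlts} nothing else is needed for the Hom-Nambu identity — outer-symmetry \eqref{outersymmetry} is used only in the ternary Jacobi identity.
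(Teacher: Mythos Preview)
Your argument is correct. The paper itself does not prove this theorem: it is only recalled here, with the proof deferred to the author's earlier paper \cite{yau13}. So there is nothing in the present paper to compare against. That said, your approach is exactly the natural one (and presumably the one in \cite{yau13}): left anti-symmetry and multiplicativity are immediate, the ternary Jacobi identity follows from outer-symmetry alone, and the Hom-Nambu identity is obtained by expanding $[\alpha(x)\alpha(y)[uvw]]$ into four $\{,,\}$-terms, applying the Hom-Jordan triple identity $(\star)$ to each, and regrouping the resulting twelve monomials.

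One small slip in your final paragraph: the count is off. Each of $[[xyu]\alpha(v)\alpha(w)]$ and $[\alpha(u)[xyv]\alpha(w)]$ expands into four $\{,,\}$-terms (both the inner $[xy\cdot]$ and the outer bracket split in two), so together they give eight terms, not four; and $[\alpha(u)\alpha(v)[xyw]]$ gives four, not two. The total is twelve, matching your twelve monomials, so the regrouping you describe does work --- you just miscounted when summarizing it.
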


Hom-Lie triple systems are related to Hom-Nambu algebras \cite{ams}, which we now recall.

\begin{definition}
\label{def:homnambulie}
Let $(V,\bracket,\alpha=(\alpha_1,\ldots,\alpha_{n-1}))$ be an $n$-ary Hom-algebra.
\begin{enumerate}
\item
The \textbf{$n$-ary Hom-Jacobian} of $V$ is the $(2n-1)$-linear map $J^n_V \colon V^{\otimes 2n-1} \to V$ defined, using the shorthand in \eqref{xij}, as
\begin{equation}
\label{homjacobian}
\begin{split}
J^n_V(x_{1,n-1};y_{1,n})
&= [\alpha_{1,n-1}(x_{1,n-1}),[y_{1,n}]]\\
&\relphantom{} - \sum_{i=1}^n \,[\alpha_{1,i-1}(y_{1,i-1}), [x_{1,n-1},y_i], \alpha_{i,n-1}(y_{i+1,n})]
\end{split}
\end{equation}
for $x_1, \ldots, x_{n-1}, y_1, \ldots , y_n \in V$.
\item
An \textbf{$n$-ary Hom-Nambu algebra} \cite{ams} is an $n$-ary Hom-algebra $V$ that satisfies the \textbf{$n$-ary Hom-Nambu identity} $J^n_V = 0$.
\item
An \textbf{$n$-ary Hom-Nambu-Lie algebra} \cite{ams} is an $n$-ary Hom-Nambu algebra in which the $n$-ary product $\bracket$ is anti-symmetric.
\end{enumerate}
\end{definition}

When the twisting maps are all equal to the identity map, $n$-ary Hom-Nambu algebras and $n$-ary Hom-Nambu-Lie algebras are the usual \textbf{$n$-ary Nambu algebras} and \textbf{$n$-ary Nambu-Lie algebras} \cite{filippov,nambu,plet}.  In this case, the $n$-ary Hom-Jacobian $J^n_V$ is called the \textbf{$n$-ary Jacobian} \cite{filippov}, and the $n$-ary Hom-Nambu identity $J^n_V = 0$ is called the \textbf{$n$-ary Nambu identity}.  Construction results and examples of $n$-ary Hom-Nambu(-Lie) algebras can be found in \cite{ams1,ams2,ams,yau13}.

In terms of the ternary Hom-Jacobian \eqref{homjacobian}, the ternary Hom-Nambu identity \eqref{homnambu} is equivalent to $J^3_L = 0$.  Therefore, Hom-Lie triple systems are automatically ternary Hom-Nambu algebras.  Note that the ternary product in a Hom-Lie triple system is only assumed to be left anti-symmetric.  So Hom-Lie triple systems are in general not ternary Hom-Nambu-Lie algebras.

\section{From Hom-Jordan algebras to Hom-Jordan triple systems}
\label{sec:homjordan}

It is known that Jordan algebras give rise to Jordan triple systems.  The main result of this section is the following Hom-version of this fact.

\begin{theorem}
\label{thm:hjhjts}
Let $(A,\mu,\alpha)$ be a multiplicative Hom-Jordan algebra.  Define the triple product
\begin{equation}
\label{hjtriple}
\{xyz\} = \alpha(x)(yz) + (xy)\alpha(z) - \alpha(y)(xz)
\end{equation}
for $x,y,z \in A$, where $\mu(x,y) = xy$.  Then
\[
A_T = (A,\{,,\},\alpha^2)
\]
is a multiplicative Hom-Jordan triple system.
\end{theorem}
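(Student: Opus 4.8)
The plan is to work throughout with the left multiplication operators $L_x\colon A\to A$, $L_x(z)=\mu(x,z)=xz$. Commutativity makes $L_x$ coincide with right multiplication by $x$, and multiplicativity of $(A,\mu,\alpha)$ supplies the basic commutation rule $\alpha\circ L_x=L_{\alpha(x)}\circ\alpha$, which will be used constantly to move twisting maps past multiplications. The first step is to rewrite \eqref{hjtriple} in operator form: by commutativity,
\[
\{xyz\}=V_{x,y}(z),\qquad V_{x,y}=L_{\alpha(x)}L_y+L_{xy}\,\alpha-L_{\alpha(y)}L_x .
\]
With this the two elementary axioms of Definition \ref{def:hjts} are immediate: the equality $\alpha^2\{xyz\}=\{\alpha^2(x)\alpha^2(y)\alpha^2(z)\}$ (multiplicativity of $A_T$) follows by pushing $\alpha^2$ through $V_{x,y}$ termwise with $\alpha\circ L_x=L_{\alpha(x)}\circ\alpha$ and the multiplicativity of $\mu$, and outer-symmetry $\{xyz\}=\{zyx\}$ follows from commutativity of $\mu$ applied to the three terms of \eqref{hjtriple}.

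The substance of the proof is the Hom-Jordan triple identity \eqref{homjtsid} with $\alpha_1=\alpha_2=\alpha^2$. Expressing each of its four terms as a composite of $V$-operators acting on one variable (with $w$ regarded as the variable and $x,y,u,v$ as parameters), \eqref{homjtsid} becomes the operator identity
\[
V_{\alpha^2(x),\,\alpha^2(y)}\,V_{u,v}-V_{\alpha^2(u),\,\alpha^2(v)}\,V_{x,y}
=V_{\{xyu\},\,\alpha^2(v)}\circ\alpha^2-V_{\alpha^2(u),\,\{yxv\}}\circ\alpha^2 ,
\]
which reduces to the classical Jordan-triple operator identity $[V_{x,y},V_{u,v}]=V_{\{xyu\},v}-V_{u,\{yxv\}}$ when $\alpha=Id$. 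The next step is to expand both sides via the definition of $V$ — and, on the right-hand side, of $\{xyu\}$ and $\{yxv\}$ — obtaining a sum of monomials in left-multiplication operators composed with powers of $\alpha$, and to reduce the resulting cancellation to a short list of elementary operator identities valid in any multiplicative Hom-Jordan algebra. Apart from commutativity, multiplicativity, and Hom-flexibility — which gives $as_A(a,b,c)=-as_A(c,b,a)$ — the essential input is the linearized Hom-Jordan identity \eqref{homjordanid2} of Proposition \ref{prop:homjordanid}; over a field of characteristic $0$ it is, together with Hom-flexibility, equivalent to \eqref{homjordanid}, and it supplies the Hom analogues of the classical Jordan operator identities, now carrying the appropriate powers of $\alpha$. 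Feeding these into the expanded expression makes the monomials cancel in groups.

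The main obstacle is the bookkeeping of the twisting map. Each elementary identity inherited from the Hom-Jordan axiom holds only when its arguments carry exactly the right number of copies of $\alpha$, while the monomials produced by a naive expansion of the two sides do not match up this way; reconciling them forces repeated, carefully ordered applications of $\alpha\circ L_x=L_{\alpha(x)}\circ\alpha$ to shuttle all twisting maps to the right. Once that is done the computation can be organized by total $\alpha$-weight, so that the classical cancellations go through weight by weight. It is exactly this freedom to move the $\alpha$'s past the $L$'s that is unavailable without multiplicativity, which is why the theorem is stated only in the multiplicative case. Beyond this combinatorial effort, no idea beyond the classical argument for Jordan algebras (cf.\ \cite{meyberg}) is needed.
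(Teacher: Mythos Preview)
Your proposal is correct and follows essentially the same route as the paper: translate \eqref{homjtsid} into an operator identity in left-multiplications, expand, and reduce everything to the linearized Hom-Jordan identity \eqref{homjordanid2}, using multiplicativity to shuttle $\alpha$'s. The one organizational difference worth noting is that the paper does not cancel monomials by ``$\alpha$-weight'' but instead introduces the nested twisted commutators $L(x,y)$, $L(x,y,z)$, $L(w,x,y,z)$ and proves the key derivation-type lemma $L(L(x,y)z)\alpha^2=L(x,y,z)$ (Lemma~\ref{lem3:hj}); this absorbs most of the bookkeeping and makes the final cancellation a single cyclic-sum instance of Lemma~\ref{lem2:hj}, whereas your direct expansion would produce many more terms before the same identities intervene.
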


Note that the twisting map of $A_T$ is $\alpha^2$, not $\alpha$.

The proof of Theorem \ref{thm:hjhjts} requires a series of Lemmas about Hom-Jordan algebras.  Let us introduce some notations.  In what follows, to simplify the typography, the composition of two maps will be written as $fg$ rather than $f \circ g$.

\begin{definition}
\label{def:L}
Let $(A,\mu,\alpha)$ be a Hom-algebra.  For $w,x,y,z \in A$, define the linear maps $L(x)$, $L(x,y)$, $L(x,y,z)$, $L(w,x,y,z)$, and $L_{xy}$ on $A$ by:
\[
\begin{split}
L(x)(w) &= xw,\\
L(x,y) &= L(\alpha(x))L(y) - L(\alpha(y))L(x),\\
L(x,y,z) &= L(\alpha(x),\alpha(y))L(z) - L(\alpha^2(z))L(x,y),\\
L(w,x,y,z) &= L(\alpha(w),\alpha(x),\alpha(y))L(z) - L(\alpha^3(z))L(w,x,y),\\
L_{xy} &= L(xy) \alpha + L(x,y).
\end{split}
\]
\end{definition}

In other words, $L(x)$ is the operator of left multiplication by $x$.  To interpret $L(x,y)$, $L(x,y,z)$, and $L(w,x,y,z)$, consider the special case $\alpha = Id$.  In this case, we have
\[
\begin{split}
L(x,y) &= L(x)L(y) - L(y)L(x) = [L(x),L(y)],\\
L(x,y,z) &= [L(x,y),L(z)] = [[L(x),L(y)],L(z)],\\
L(w,x,y,z) &= [L(w,x,y),L(z)] = [[[L(w),L(x)],L(y)],L(z)],
\end{split}
\]
where $[,]$ denotes the commutator bracket.  So in general $L(x,y)$, $L(x,y,z)$, and $L(w,x,y,z)$ are Hom-type analogues of the commutator bracket $[L(x),L(y)]$ and the iterated brackets $[[L(x),L(y)],L(z)]$ and $[[[L(w),L(x)],L(y)],L(z)]$.  Finally, applying $L_{xy}$ to $z \in A$, we have
\begin{equation}
\label{lxy}
L_{xy}(z) = (xy)\alpha(z) + \alpha(x)(yz) - \alpha(y)(xz),
\end{equation}
which is the triple product $\{xyz\}$ in \eqref{hjtriple}.  In other words,
\[
L_{xy} = \{xy(-)\}
\]
is the left multiplication operator with respect to the triple product in \eqref{hjtriple}.

Now we consider some basic properties of these linear operators.  The next Lemma will be used below often without further comment.

\begin{lemma}
\label{lem1:hj}
Let $(A,\mu,\alpha)$ be a multiplicative Hom-algebra with $\mu$ commutative.  Then for all $x,y,z \in A$, we have:
\begin{enumerate}
\item
$L(x)(y) = L(y)(x)$.
\item
$L(x+y) = L(x) + L(y)$.
\item
$L(x,y) = - L(y,x)$.
\item
$\alpha L(x) = L(\alpha(x)) \alpha$.
\item
$\alpha L(x,y) = L(\alpha(x),\alpha(y)) \alpha$.
\item
$\alpha L(x,y,z) = L(\alpha(x),\alpha(y),\alpha(z)) \alpha$.
\end{enumerate}
\end{lemma}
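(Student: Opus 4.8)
The plan is to verify each of the six identities directly from Definition \ref{def:L}, using commutativity of $\mu$ and multiplicativity of $\alpha$, in the order listed, since later parts use earlier ones. Part (1) is just $L(x)(y)=xy=yx=L(y)(x)$ by commutativity, and part (2) is the bilinearity of $\mu$ in its first slot. For part (3), from the definition $L(x,y)=L(\alpha(x))L(y)-L(\alpha(y))L(x)$ we immediately read off $L(x,y)=-L(y,x)$ by swapping the two summands; note this is the only antisymmetry we get for free, and nothing is claimed about $L(x,y,z)$ or $L(w,x,y,z)$ here.

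The crux is part (4), the "twisting-compatibility" $\alpha L(x)=L(\alpha(x))\alpha$, which is just a restatement of multiplicativity: for any $w$, $\alpha(L(x)(w))=\alpha(xw)=\alpha(x)\alpha(w)=L(\alpha(x))(\alpha(w))=(L(\alpha(x))\alpha)(w)$, where the middle equality is $\alpha\circ\mu=\mu\circ\alpha^{\otimes 2}$. Once (4) is in hand, parts (5) and (6) follow by bootstrapping. For (5), compute
\begin{align*}
\alpha L(x,y) &= \alpha L(\alpha(x))L(y) - \alpha L(\alpha(y))L(x)\\
&= L(\alpha^2(x))\alpha L(y) - L(\alpha^2(y))\alpha L(x)\\
&= L(\alpha^2(x))L(\alpha(y))\alpha - L(\alpha^2(y))L(\alpha(x))\alpha\\
&= L(\alpha(x),\alpha(y))\alpha,
\end{align*}
applying (4) twice in each term (first to pull $\alpha$ past $L(\alpha(x))$, then past $L(y)$) and then recognizing the bracket $L(\alpha(x),\alpha(y))=L(\alpha^2(x))L(\alpha(y))-L(\alpha^2(y))L(\alpha(x))$ from the definition. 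For (6), run the same argument one level up: $\alpha L(x,y,z)=\alpha L(\alpha(x),\alpha(y))L(z)-\alpha L(\alpha^2(z))L(x,y)$; push $\alpha$ through the first factor of each term using (5) on the left summand and (4) on the right summand, then push it through the trailing $L(z)$ and $L(x,y)$ using (4) and (5) respectively, and finally re-collapse into $L(\alpha^2(x),\alpha^2(y))L(\alpha(z))-L(\alpha^3(z))L(\alpha(x),\alpha(y))=L(\alpha(x),\alpha(y),\alpha(z))\alpha$.

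I expect no real obstacle here: the whole lemma is a formal consequence of "multiplicativity lets $\alpha$ slide past every $L$, incrementing its argument by one application of $\alpha$," together with commutativity for (1) and (3). The only thing to be careful about is bookkeeping the powers of $\alpha$ — each time $\alpha$ crosses an $L(u)$ the argument $u$ is replaced by $\alpha(u)$, so crossing two operators in succession raises some arguments to $\alpha^2$, which is exactly why the definitions of $L(x,y,z)$ and $L(w,x,y,z)$ carry the factors $\alpha^2$ and $\alpha^3$. A clean way to present it is to prove (4) as the base case and then state that (5) and (6) follow by the same computation "one and two levels up," displaying the (5) computation in full and leaving (6) to the reader, since it is mechanically identical.
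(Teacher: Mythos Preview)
Your proposal is correct and follows the same approach as the paper: direct verification from Definition~\ref{def:L} using commutativity of $\mu$ and multiplicativity of $\alpha$. The paper's own proof is the single sentence ``All the assertions are immediate from Definition~\ref{def:L},'' so your write-up is simply a more explicit version of the same argument (one small quibble: part~(3) does not actually require commutativity, only the definition).
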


\begin{proof}
All the assertions are immediate from Definition \ref{def:L}.
\end{proof}

We will need the following operator form of the linearized Hom-Jordan identity \eqref{homjordanid2}.

\begin{lemma}
\label{lem2:hj}
Let $(A,\mu,\alpha)$ be a multiplicative Hom-Jordan algebra.  Then
\[
\cyclicsum_{x,w,z}\, L(\alpha(x),wz)\alpha = 0
\]
for all $w,x,z \in A$, where $\cyclicsum_{x,w,z}$ denotes the cyclic sum over $x$, $w$, and $z$.
\end{lemma}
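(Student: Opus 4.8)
The goal is to translate the linearized Hom-Jordan identity \eqref{homjordanid2},
\[
\cyclicsum_{x,w,z}\, as_A(\alpha(x),\alpha(y),wz) = 0,
\]
into the claimed operator identity by expanding the Hom-associator in terms of left multiplication operators and using commutativity. The plan is to apply the left-hand side to an arbitrary element $y \in A$, rewrite everything using $L$, observe that the result has exactly the shape $\bigl(\cyclicsum_{x,w,z} L(\alpha(x),wz)\alpha\bigr)(y)$, and conclude since $y$ is arbitrary.

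First I would recall that, by definition \eqref{homassociator}, for any $u,v \in A$ one has $as_A(u,v,s) = (uv)\alpha(s) - \alpha(u)(vs) = \mu(\mu(u,v),\alpha(s)) - \mu(\alpha(u),\mu(v,s))$. Setting $u = \alpha(x)$, $v = \alpha(y)$, $s = wz = \mu(w,z)$ and reading this as a function of $y$, I would use commutativity and Lemma \ref{lem1:hj} to rewrite each term. For the first term, $(\alpha(x)\alpha(y))\alpha(wz) = \mu\bigl(\alpha(\mu(x,y)),\alpha(wz)\bigr)$ by multiplicativity $\mu\circ\alpha^{\otimes 2} = \alpha\circ\mu$; applying commutativity twice this equals $L(\alpha(wz))\alpha\bigl(\mu(x,y)\bigr) = L(\alpha(wz))\,\alpha\,L(x)(y)$. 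Wait — more carefully, I want to get operators acting on $y$ with the argument $y$ at the far right, so I would instead write $\alpha(x)\alpha(y) = L(\alpha(x))\alpha(y)$, then $\mu(L(\alpha(x))\alpha(y), \alpha(wz)) = L(\alpha(wz)) L(\alpha(x))\alpha (y)$ using commutativity of $\mu$, and by Lemma \ref{lem1:hj}(4), $L(\alpha(wz))L(\alpha(x))\alpha = L(\alpha(wz))\,\alpha\, L(x)$. Hmm, that introduces $\alpha$ in the middle; the cleaner route is to keep $\alpha(wz)$ as is and recognize $L(\alpha(wz))$. For the second term, $\alpha(\alpha(x))\bigl(\alpha(y)(wz)\bigr) = L(\alpha^2(x)) L(wz)\alpha(y) = L(\alpha^2(x))L(wz)\alpha\,(y)$. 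So $as_A(\alpha(x),\alpha(y),wz)$ applied to $y$ equals $\bigl(L(\alpha(wz))L(\alpha(x)) - L(\alpha^2(x))L(wz)\bigr)\alpha\,(y)$, up to sorting out whether the first operator is $L(\alpha(wz))L(\alpha(x))\alpha$ or carries the $\alpha$ internally — by Lemma \ref{lem1:hj}(4) these agree: $L(\alpha(wz))L(\alpha(x))\alpha = L(\alpha(wz))\alpha L(x)$, and similarly I can move things so that, comparing with $L(\alpha(x),wz) = L(\alpha(wz))L(\alpha(x)) - L(\alpha^2(wz))\,\cdot$ — no. Let me instead match directly against Definition \ref{def:L}: $L(x,y) = L(\alpha(x))L(y) - L(\alpha(y))L(x)$, so $L(\alpha(x),wz) = L(\alpha^2(x))L(wz) - L(\alpha(wz))L(\alpha(x))$. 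Hence the displayed expression above equals $-L(\alpha(x),wz)\alpha\,(y)$.

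Therefore $as_A(\alpha(x),\alpha(y),wz)$, as an element depending on $y$, equals $-L(\alpha(x),wz)\alpha(y)$ for each fixed $w,x,z$. Summing cyclically over $(x,w,z)$ and invoking \eqref{homjordanid2} gives $\bigl(\cyclicsum_{x,w,z} L(\alpha(x),wz)\alpha\bigr)(y) = 0$ for all $y \in A$, which is exactly the assertion. The main technical obstacle — and the only place care is needed — is the bookkeeping in the expansion: making sure multiplicativity $\alpha\mu = \mu\alpha^{\otimes 2}$ and the four relations of Lemma \ref{lem1:hj} are applied consistently so that each of the two summands of the Hom-associator lands precisely on one of the two summands in the definition of $L(\alpha(x),wz)$, with the $\alpha$ on the correct side. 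I expect that once the sign $L(x,y) = -L(y,x)$ and part (4) of Lemma \ref{lem1:hj} are used, everything lines up, and the cyclic-sum structure is preserved verbatim because the substitution $y \mapsto$ (the element being acted on) does not interact with the cyclic variables $x,w,z$.
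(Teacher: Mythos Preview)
Your proposal is correct and follows essentially the same route as the paper: both expand $as_A(\alpha(x),\alpha(y),wz)$ as $\bigl(L(\alpha(wz))L(\alpha(x)) - L(\alpha^2(x))L(wz)\bigr)\alpha(y)$, recognize this as $L(wz,\alpha(x))\alpha(y) = -L(\alpha(x),wz)\alpha(y)$, and then invoke the linearized Hom-Jordan identity \eqref{homjordanid2}. The only difference is cosmetic---the paper states the computation in three clean lines without the exploratory detours.
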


\begin{proof}
By the commutativity of $\mu$, the linearized Hom-Jordan identity \eqref{homjordanid2} can be rewritten as
\[
\begin{split}
0 &= \cyclicsum_{x,w,z} \left\{L(\alpha(wz))L(\alpha(x)) - L(\alpha^2(x))L(wz)\right\}\alpha(y)\\
&= \cyclicsum_{x,w,z}\, L(wz,\alpha(x))\alpha(y)\\
&= - \cyclicsum_{x,w,z}\, L(\alpha(x),wz)\alpha(y).
\end{split}
\]
This proves the Lemma.
\end{proof}

In a Jordan algebra, it is known that the commutator $[L(x),L(y)]$ of two left multiplication operators is a derivation \cite{schafer} (Chapter IV).  One way to express this derivative property is
\[
L([L(x),L(y)]z) = [[L(x),L(y)],L(z)].
\]
The following Lemma is the Hom-version of this fact.

\begin{lemma}
\label{lem3:hj}
Let $(A,\mu,\alpha)$ be a multiplicative Hom-Jordan algebra.  Then
\begin{equation}
\label{lxyz2}
L(L(x,y)z)\alpha^2 = L(x,y,z)
\end{equation}
for all $x,y,z \in A$.
\end{lemma}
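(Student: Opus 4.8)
The plan is to derive \eqref{lxyz2} from the operator form of the linearized Hom-Jordan identity in Lemma \ref{lem2:hj}, unravelling the definition of $L(x,y,z)$ and using the derivation property of left multiplications that is built into the Hom-Jordan axiom. First I would expand the right-hand side using Definition \ref{def:L}:
\[
L(x,y,z) = L(\alpha(x),\alpha(y))L(z) - L(\alpha^2(z))L(x,y),
\]
and then expand $L(\alpha(x),\alpha(y)) = L(\alpha^2(x))L(\alpha(y)) - L(\alpha^2(y))L(\alpha(x))$, so that $L(x,y,z)$ becomes an explicit alternating sum of five- (really four-) fold compositions of left multiplication operators. Simultaneously I would rewrite the left-hand side $L(L(x,y)z)\alpha^2$ by noting $L(x,y)z = \alpha(x)(yz) - \alpha(y)(xz)$, i.e. it is the difference of two products, so by Lemma \ref{lem1:hj}(2) $L(L(x,y)z) = L(\alpha(x)(yz)) - L(\alpha(y)(xz))$.

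The heart of the argument is a ``linearized Hom-Jordan'' identity expressing $L(uv)$-type operators in terms of commutators, i.e. some operator identity of the shape
\[
L(\alpha(u)(vw))\alpha^2 = \text{(combination of }L(\alpha^2(u)),L(\alpha^2(v)),L(\alpha(w))\text{ and their products)}.
\]
This should come from Lemma \ref{lem2:hj}: writing out $\cyclicsum_{x,w,z} L(\alpha(x),wz)\alpha = 0$ gives
\[
L((wz)x',\ \cdot\ )\text{-terms} = \sum L(\alpha^2(x))L(wz)\alpha - L(\alpha(wz))L(\alpha(x))\alpha,
\]
cycled over $(x,w,z)$; I can specialize or combine two cyclic instances of this to isolate a single $L(\alpha((yz)) )$ term (using that $\alpha$ is injective-free, i.e. just using multiplicativity $\alpha L = L\alpha\cdot\alpha$). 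Concretely, I expect that substituting the pair $(x,w,z)$ once and, say, $(y,w',z')$ once, and subtracting, lets me cancel the doubly-nested terms and leaves precisely $L(L(x,y)z)\alpha^2$ on one side and $L(x,y,z)$ on the other. I would also need Lemma \ref{lem1:hj}(4)--(5) repeatedly to push $\alpha$'s past the $L$'s so that all terms have matching numbers of $\alpha$-twists, and commutativity of $\mu$ to rewrite $L(x)(y)=L(y)(x)$ when aligning the two sides.

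The main obstacle will be bookkeeping: ensuring the $\alpha$-powers line up on both sides (the statement has $\alpha^2$ on the left, and $L(x,y,z)$ secretly carries two $\alpha$'s through its definition as well), and choosing exactly the right two cyclic substitutions of Lemma \ref{lem2:hj} so that every quadratic-in-$L$ term cancels. A clean way to organize this is to first prove the auxiliary identity
\[
L(\alpha(x)(yz))\alpha^2 - L(\alpha(y)(xz))\alpha^2
= L(\alpha^2(x),\alpha^2(y))L(\alpha(z)) - L(\alpha^3(z))L(\alpha(x),\alpha(y))
\]
wait --- more precisely its $\alpha$-shifted version matching Definition \ref{def:L} --- by applying Lemma \ref{lem2:hj} to the triple $(x,y,z)$ and then to $(y,x,z)$ and subtracting (the $(z,\text{-},\text{-})$ terms are common and drop out, while the commutativity $L(x,y)=-L(y,x)$ from Lemma \ref{lem1:hj}(3) handles a sign). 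Then \eqref{lxyz2} follows by expanding $L(x,y,z)$ and recognizing the two sides as equal termwise after a final application of Lemma \ref{lem1:hj}(4)--(6) to normalize the $\alpha$'s. Throughout I would suppress $\circ$ and write compositions as juxtaposition, exactly as set up before the lemma.
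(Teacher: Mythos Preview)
Your overall strategy---expand $L(x,y,z)$, express the left side as $L(\alpha(x)(yz))\alpha^2 - L(\alpha(y)(xz))\alpha^2$, then use the linearized Hom-Jordan identity, swap $x\leftrightarrow y$, and subtract---is exactly what the paper does. But the specific plan to get there from Lemma~\ref{lem2:hj} has a real gap.

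Lemma~\ref{lem2:hj} is the linearized Hom-Jordan identity viewed as an operator on the \emph{uncycled} variable $y$; its terms are built from $L(\alpha^2(x))L(wz)$ and $L(\alpha(wz))L(\alpha(x))$, none of which is of the form $L(\text{triple product})\alpha^2$. Worse, your proposed move ``apply Lemma~\ref{lem2:hj} to $(x,y,z)$ and to $(y,x,z)$ and subtract'' collapses: by commutativity of $\mu$ the cyclic sum $\cyclicsum L(\alpha(\cdot),\cdot\cdot)\alpha$ is fully symmetric in its three parameters, so those two instances are literally the same identity and the difference is $0=0$.

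What the paper does instead is go back to the linearized Hom-Jordan identity \eqref{homjordanid2} and regard it as an operator on one of the \emph{cycled} variables, namely $w$. This produces a six-term operator identity (the paper's \eqref{star'}) whose crucial feature is the term $-L(\alpha(y)(xz))\alpha^2$, arising from $\alpha^2(w)\cdot(\alpha(y)(zx))$ after commutativity. The three remaining ``plus'' terms in \eqref{star'} are then checked to be symmetric in $x$ and $y$, so swapping $x\leftrightarrow y$ and subtracting kills them and leaves precisely $L(L(x,y)z)\alpha^2 = L(x,y,z)$ after invoking the four-term expansion \eqref{lxyz}. So the missing idea is simply: treat $w$, not $y$, as the operator input when you pass from \eqref{homjordanid2} to operators.
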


\begin{proof}
First observe that by definition,
\begin{equation}
\label{lxyz}
\begin{split}
L(x,y,z) &= L(\alpha^2(x))L(\alpha(y))L(z) - L(\alpha^2(y))L(\alpha(x))L(z)\\
&\relphantom{} - L(\alpha^2(z))L(\alpha(x))L(y) + L(\alpha^2(z))L(\alpha(y))L(x).
\end{split}
\end{equation}
Using the commutativity of $\mu$ and the multiplicativity of $\alpha$ and regarding the linearized Hom-Jordan identity \eqref{homjordanid2} as an operation on $w$, we can rewrite it as:
\begin{equation}
\label{star'}
\begin{split}
0 &= L(\alpha(xy))L(\alpha(z))\alpha - L(\alpha^2(x))L(\alpha(y))L(z)\\
&\relphantom{} + L(\alpha(zy))L(\alpha(x))\alpha - L(\alpha^2(z))L(\alpha(y))L(x)\\
&\relphantom{} + L(\alpha(zx))L(\alpha(y))\alpha - L(\alpha(y)(xz))\alpha^2.
\end{split}
\end{equation}
Note that the sum of the three terms on the right-hand side of \eqref{star'} with a plus sign is symmetric in $x$ and $y$.  Therefore, switching $x$ and $y$ in \eqref{star'}, subtracting the result from \eqref{star'}, and then using \eqref{lxyz}, we obtain the desired identity \eqref{lxyz2}.
\end{proof}

Recall from \eqref{lxy} that $L_{xy}(z) = \{xyz\}$, the triple product in \eqref{hjtriple}.  Therefore, the Hom-Jordan triple identity \eqref{homjtsid} for $A_T = (A,\{,,\},\alpha^2)$ in Theorem \ref{thm:hjhjts} becomes
\begin{equation}
\label{homjts1}
L_{\alpha^2(x)\alpha^2(y)}L_{uv} - L_{\alpha^2(u)\alpha^2(v)}L_{xy} - L_{\{xyu\}\alpha^2(v)}\alpha^2 + L_{\alpha^2(u)\{yxv\}}\alpha^2 = 0
\end{equation}
for all $u,v,x,y \in A$.  In the next several Lemmas, we compute the four terms in \eqref{homjts1} in terms of the other operators in Definition \ref{def:L}.  The following Lemma is about the first term in \eqref{homjts1}.

\begin{lemma}
\label{lem4:hj}
Let $(A,\mu,\alpha)$ be a multiplicative Hom-Jordan algebra.  Then
\begin{equation}
\label{A}
\begin{split}
L_{\alpha^2(x)\alpha^2(y)}L_{uv}
&= L(\alpha^2(xy))L(\alpha(uv))\alpha^2 + L(\alpha^2(x),\alpha^2(y))L(uv)\alpha\\
&\relphantom{} + L(\alpha^2(xy))L(\alpha(u),\alpha(v))\alpha + L(\alpha^2(x),\alpha^2(y))L(u,v)
\end{split}
\end{equation}
for all $u,v,x,y \in A$.
\end{lemma}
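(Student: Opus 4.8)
The plan is a direct expansion followed by bookkeeping of the twisting map. First I would invoke multiplicativity of $\alpha$ to rewrite the product inside the outer operator: since $\alpha\circ\mu=\mu\circ\alpha^{\otimes 2}$, one has $\alpha^2(x)\alpha^2(y)=\alpha^2(xy)$, so by Definition \ref{def:L},
\[
L_{\alpha^2(x)\alpha^2(y)} = L(\alpha^2(xy))\alpha + L(\alpha^2(x),\alpha^2(y)),
\qquad
L_{uv} = L(uv)\alpha + L(u,v).
\]
Composing these and distributing over the four resulting products, $L_{\alpha^2(x)\alpha^2(y)}L_{uv}$ becomes
\[
L(\alpha^2(xy))\alpha L(uv)\alpha
+ L(\alpha^2(xy))\alpha L(u,v)
+ L(\alpha^2(x),\alpha^2(y))L(uv)\alpha
+ L(\alpha^2(x),\alpha^2(y))L(u,v).
\]

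Next I would move the interior $\alpha$ past the left-multiplication operators in the first two summands. By Lemma \ref{lem1:hj}(4), $\alpha L(uv)=L(\alpha(uv))\alpha$, so the first summand equals $L(\alpha^2(xy))L(\alpha(uv))\alpha^2$; by Lemma \ref{lem1:hj}(5), $\alpha L(u,v)=L(\alpha(u),\alpha(v))\alpha$, so the second summand equals $L(\alpha^2(xy))L(\alpha(u),\alpha(v))\alpha$. The last two summands are already in final form, and collecting the four terms yields exactly the right-hand side of \eqref{A}.

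There is no genuine obstacle here: the lemma is a bookkeeping identity recording the effect of precomposing $L_{uv}$ with $L_{\alpha^2(x)\alpha^2(y)}$, and the only inputs are the definition of $L_{ab}$, the commutativity and multiplicativity packaged into Lemma \ref{lem1:hj}, and careful tracking of how many copies of $\alpha$ sit on the right of each term. The same strategy will be reused for the remaining three summands of \eqref{homjts1} in the subsequent lemmas; the real difficulty of Theorem \ref{thm:hjhjts} is deferred to the later stage where these four expansions are combined and the operator forms of the linearized Hom-Jordan identity (Lemmas \ref{lem2:hj} and \ref{lem3:hj}) are applied to force the alternating sum to vanish.
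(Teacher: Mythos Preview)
Your proposal is correct and follows essentially the same approach as the paper: expand the product $\bigl(L(\alpha^2(x)\alpha^2(y))\alpha + L(\alpha^2(x),\alpha^2(y))\bigr)\bigl(L(uv)\alpha + L(u,v)\bigr)$ and use multiplicativity of $\alpha$ together with Lemma~\ref{lem1:hj} to push the interior $\alpha$ to the right. The only cosmetic difference is that you apply multiplicativity $\alpha^2(x)\alpha^2(y)=\alpha^2(xy)$ at the outset rather than during the expansion.
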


\begin{proof}
From Definition \ref{def:L} we have
\[
L_{\alpha^2(x)\alpha^2(y)}L_{uv} = \left(L(\alpha^2(x)\alpha^2(y))\alpha + L(\alpha^2(x),\alpha^2(y))\right)\left(L(uv)\alpha + L(u,v)\right).
\]
The desired identity is obtained by expanding this product, using the multiplicativity of $\alpha$ and Lemma \ref{lem1:hj}.
\end{proof}

Switching $(x,y)$ with $(u,v)$ in Lemma \ref{lem4:hj}, we obtain the following result, which computes the second term in \eqref{homjts1}.

\begin{lemma}
\label{lem5:hj}
Let $(A,\mu,\alpha)$ be a multiplicative Hom-Jordan algebra.  Then
\begin{equation}
\label{B}
\begin{split}
L_{\alpha^2(u)\alpha^2(v)}L_{xy}
&= L(\alpha^2(uv))L(\alpha(xy))\alpha^2 + L(\alpha^2(u),\alpha^2(v))L(xy)\alpha\\
&\relphantom{} + L(\alpha^2(uv))L(\alpha(x),\alpha(y))\alpha + L(\alpha^2(u),\alpha^2(v))L(x,y)
\end{split}
\end{equation}
for all $u,v,x,y \in A$.
\end{lemma}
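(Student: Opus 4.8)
The plan is to derive \eqref{B} from Lemma \ref{lem4:hj} by the symmetry substitution already announced in the preceding paragraph: the triple product $\{xyz\} = L_{xy}(z)$ is built symmetrically enough that swapping the pair $(x,y)$ with the pair $(u,v)$ in the identity \eqref{A} is a legitimate operation, since \eqref{A} holds for \emph{all} $u,v,x,y \in A$. Concretely, I would start from Lemma \ref{lem4:hj}, which reads
\[
L_{\alpha^2(x)\alpha^2(y)}L_{uv}
= L(\alpha^2(xy))L(\alpha(uv))\alpha^2 + L(\alpha^2(x),\alpha^2(y))L(uv)\alpha
+ L(\alpha^2(xy))L(\alpha(u),\alpha(v))\alpha + L(\alpha^2(x),\alpha^2(y))L(u,v),
\]
and simply relabel the four free variables by the transposition $(x,y)\leftrightarrow(u,v)$. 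Each term on the right transforms in the obvious way: $L(\alpha^2(xy))L(\alpha(uv))\alpha^2 \mapsto L(\alpha^2(uv))L(\alpha(xy))\alpha^2$, $L(\alpha^2(x),\alpha^2(y))L(uv)\alpha \mapsto L(\alpha^2(u),\alpha^2(v))L(xy)\alpha$, and so on, which is exactly the right-hand side of \eqref{B}. On the left, $L_{\alpha^2(x)\alpha^2(y)}L_{uv} \mapsto L_{\alpha^2(u)\alpha^2(v)}L_{xy}$, which is the left-hand side of \eqref{B}. That completes the proof.

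There is essentially no obstacle here, so the write-up should be brief; the only thing worth remarking on is \emph{why} the substitution is valid, namely that Lemma \ref{lem4:hj} was proved as an identity of operators depending on four independent arguments ranging over all of $A$, so permuting the names of those arguments yields another valid identity. I would not re-expand the product $\left(L(\alpha^2(u)\alpha^2(v))\alpha + L(\alpha^2(u),\alpha^2(v))\right)\left(L(xy)\alpha + L(x,y)\right)$ from scratch, since that would merely duplicate the computation already done in the proof of Lemma \ref{lem4:hj}; invoking the symmetry is cleaner and is the intent signalled by the sentence ``Switching $(x,y)$ with $(u,v)$ in Lemma \ref{lem4:hj}'' in the excerpt. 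If a referee wanted the direct computation instead, it is the same three-line expansion using multiplicativity of $\alpha$ (to pull $\alpha$ through products, e.g. $\alpha^2(u)\alpha^2(v) = \alpha^2(uv)$) together with parts (4) and (5) of Lemma \ref{lem1:hj} (to commute $\alpha$'s past the $L$ operators), but the relabeling argument makes it unnecessary.

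So the proof I would write is: \textbf{Proof.} The identity \eqref{A} in Lemma \ref{lem4:hj} holds for all $u,v,x,y \in A$. Interchanging the pairs $(x,y)$ and $(u,v)$ throughout \eqref{A} yields \eqref{B}. $\square$ That is the entire content; the substance of this pair of lemmas lives in Lemma \ref{lem4:hj}, and Lemma \ref{lem5:hj} is a bookkeeping step isolating the second of the four terms of \eqref{homjts1} so that it can be combined with the analogous expansions of the remaining two terms (involving $L_{\{xyu\}\alpha^2(v)}\alpha^2$ and $L_{\alpha^2(u)\{yxv\}}\alpha^2$) in the subsequent lemmas, where Lemma \ref{lem3:hj} and the linearized Hom-Jordan identity of Lemma \ref{lem2:hj} will actually be needed to force the total to vanish.
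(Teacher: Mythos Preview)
Your proposal is correct and matches the paper's own treatment exactly: the paper gives no separate proof for this lemma, only the sentence ``Switching $(x,y)$ with $(u,v)$ in Lemma \ref{lem4:hj}, we obtain the following result,'' which is precisely the relabeling argument you spell out.
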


Using the previous two Lemmas, the first two terms in \eqref{homjts1} are computed in the following Lemma.

\begin{lemma}
\label{lem6:hj}
Let $(A,\mu,\alpha)$ be a multiplicative Hom-Jordan algebra.  Then
\[
\begin{split}
L(\alpha^2(xy))L(\alpha(uv))\alpha^2 &- L(\alpha^2(uv))L(\alpha(xy))\alpha^2
= L(\alpha(xy),\alpha(u)\alpha(v))\alpha^2,\\
L(\alpha^2(x),\alpha^2(y))L(uv)\alpha &- L(\alpha^2(uv))L(\alpha(x),\alpha(y))\alpha = L(L(\alpha(x),\alpha(y))(uv))\alpha^3,\\
L(\alpha^2(xy))L(\alpha(u),\alpha(v))\alpha &- L(\alpha^2(u),\alpha^2(v))L(xy)\alpha\\
&= L(L(\alpha^2(v))L(\alpha(u))(xy))\alpha^3 - L(L(\alpha^2(u))L(\alpha(v))(xy))\alpha^3,\\
L(\alpha^2(x),\alpha^2(y))L(u,v) &- L(\alpha^2(u),\alpha^2(v))L(x,y)\\
&= L(x,y,u,v) + L(y,x,v,u)
\end{split}
\]
for all $u,v,x,y \in A$.
\end{lemma}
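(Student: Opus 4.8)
The plan is to establish the four displayed identities one at a time; each is a formal consequence of Definition~\ref{def:L} and Lemma~\ref{lem1:hj}, with the second and third also invoking the derivation-type identity of Lemma~\ref{lem3:hj} (and hence, ultimately, the Hom-Jordan identity), while the fourth is purely formal and uses no Jordan hypothesis at all. None of the four needs Lemmas~\ref{lem4:hj} or~\ref{lem5:hj}, which instead serve afterwards to assemble \eqref{homjts1}. For the first identity I would simply unwind $L(a,b) = L(\alpha(a))L(b) - L(\alpha(b))L(a)$ with $a = \alpha(xy)$ and $b = \alpha(u)\alpha(v)$, using the multiplicativity of $\alpha$ to replace $\alpha(u)\alpha(v)$ by $\alpha(uv)$ and $\alpha(\alpha(u)\alpha(v))$ by $\alpha^2(uv)$; composing on the right with $\alpha^2$ then yields exactly the claim.

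For the second and third identities the engine is Lemma~\ref{lem3:hj}, $L(L(x,y)z)\alpha^2 = L(x,y,z) = L(\alpha(x),\alpha(y))L(z) - L(\alpha^2(z))L(x,y)$. Substituting $x \mapsto \alpha(x)$, $y \mapsto \alpha(y)$, $z \mapsto uv$ gives $L(L(\alpha(x),\alpha(y))(uv))\alpha^2 = L(\alpha^2(x),\alpha^2(y))L(uv) - L(\alpha^2(uv))L(\alpha(x),\alpha(y))$, which is the second identity after composing with $\alpha$. The third identity comes from the same lemma with the substitution $x \mapsto \alpha(u)$, $y \mapsto \alpha(v)$, $z \mapsto xy$, producing $L(L(\alpha(u),\alpha(v))(xy))\alpha^3 = L(\alpha^2(u),\alpha^2(v))L(xy)\alpha - L(\alpha^2(xy))L(\alpha(u),\alpha(v))\alpha$; it then remains only to rewrite the left side by expanding $L(\alpha(u),\alpha(v)) = L(\alpha^2(u))L(\alpha(v)) - L(\alpha^2(v))L(\alpha(u))$ and using additivity of $b \mapsto L(b)$ (Lemma~\ref{lem1:hj}) to turn $-L(L(\alpha(u),\alpha(v))(xy))\alpha^3$ into $L(L(\alpha^2(v))L(\alpha(u))(xy))\alpha^3 - L(L(\alpha^2(u))L(\alpha(v))(xy))\alpha^3$.

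For the fourth identity I would expand $L(x,y,u,v)$ by peeling off the layers of Definition~\ref{def:L} — first $L(w,x,y,z) = L(\alpha(w),\alpha(x),\alpha(y))L(z) - L(\alpha^3(z))L(w,x,y)$, then the definitions of $L(\alpha(x),\alpha(y),\alpha(u))$ and $L(x,y,u)$ — reaching a sum of four operator words, do the same to $L(y,x,v,u)$, and then apply the anti-symmetries $L(y,x) = -L(x,y)$, $L(\alpha(y),\alpha(x)) = -L(\alpha(x),\alpha(y))$, $L(\alpha^2(y),\alpha^2(x)) = -L(\alpha^2(x),\alpha^2(y))$ from Lemma~\ref{lem1:hj}(3). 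In the resulting sum of eight words, four cancel in two pairs, and the remaining four collapse, via $L(\alpha(u))L(v) - L(\alpha(v))L(u) = L(u,v)$ and $L(\alpha^3(v))L(\alpha^2(u)) - L(\alpha^3(u))L(\alpha^2(v)) = -L(\alpha^2(u),\alpha^2(v))$, into $L(\alpha^2(x),\alpha^2(y))L(u,v) - L(\alpha^2(u),\alpha^2(v))L(x,y)$.

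Conceptually nothing here is hard; the bookkeeping in the fourth identity (tracking the eight words and their signs, and spotting the two cancelling pairs) is the most tedious step, but the genuinely delicate point is choosing the right substitution into Lemma~\ref{lem3:hj} in identities two and three — which of $uv$, $xy$ goes into the last slot, and that the first two slots must be pre-twisted by $\alpha$ — so that the derivation identity, and with it the Hom-Jordan identity, enters in precisely the shape needed to match the stated right-hand sides.
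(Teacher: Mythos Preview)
Your proposal is correct and follows essentially the same route as the paper: the first identity unwinds the definition of $L(\cdot,\cdot)$, the second and third are obtained from Lemma~\ref{lem3:hj} via exactly the substitutions you describe (the paper phrases this as ``both sides equal $L(\alpha(x),\alpha(y),uv)\alpha$'' and ``both sides equal $-L(\alpha(u),\alpha(v),xy)\alpha$''), and the fourth is the same four-term expansion of $L(x,y,u,v)$ followed by the swap $(x,y)\leftrightarrow(y,x)$, $(u,v)\leftrightarrow(v,u)$ and the anti-symmetry cancellations you outline. Your remark that Lemmas~\ref{lem4:hj} and~\ref{lem5:hj} are not used here but only later to assemble \eqref{homjts1} is also accurate.
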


\begin{proof}
The first equality is immediate from the definition of $L(x,y)$.  The second equality holds because both sides are equal to $L(\alpha(x),\alpha(y),uv)\alpha$ by Lemma \ref{lem3:hj}.  The third equality holds because, by Lemma \ref{lem3:hj}, both sides are equal to
\[
-L(\alpha(u),\alpha(v),xy)\alpha = -L(L(\alpha(u),\alpha(v))(xy))\alpha^3.
\]
For the last equality, observe that by definition,
\[
\begin{split}
L(x,y,u,v) &= L(\alpha^2(x),\alpha^2(y))L(\alpha(u))L(v) - L(\alpha^3(u))L(\alpha(x),\alpha(y))L(v)\\
&\relphantom{} - L(\alpha^3(v))L(\alpha(x),\alpha(y))L(u) + L(\alpha^3(v))L(\alpha^2(u))L(x,y).
\end{split}
\]
Therefore, switching $(x,y)$ with $(y,x)$ and $(u,v)$ with $(v,u)$ and adding the result to $L(x,y,u,v)$, we obtain
\[
\begin{split}
L(x,y,u,v) + L(y,x,v,u)
&= L(\alpha^2(x),\alpha^2(y))\{L(\alpha(u))L(v) - L(\alpha(v))L(u)\}\\
&\relphantom{} - \{L(\alpha^3(u))L(\alpha^2(v)) - L(\alpha^3(v))L(\alpha^2(u))\}L(x,y)\\
&= L(\alpha^2(x),\alpha^2(y))L(u,v) - L(\alpha^2(u),\alpha^2(v))L(x,y).
\end{split}
\]
This proves the last equality.
\end{proof}

Next we compute the third term in \eqref{homjts1}.

\begin{lemma}
\label{lem7:hj}
Let $(A,\mu,\alpha)$ be a multiplicative Hom-Jordan algebra.  Then
\[
\begin{split}
L_{\{xyu\}\alpha^2(v)}\alpha^2
&= L(L(\alpha^2(v))L(\alpha(u))(xy))\alpha^3\\
&\relphantom{} + L(L(\alpha(x),\alpha(y))(uv))\alpha^3 - L(L(\alpha^2(u))L(x,y)(v))\alpha^3\\
&\relphantom{} - L(\alpha^2(v),(xy)\alpha(u))\alpha^2 + L(x,y,u,v)
\end{split}
\]
for all $u,v,x,y \in A$, where $\{,,\}$ is the triple product in \eqref{hjtriple}.
\end{lemma}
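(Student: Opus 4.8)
The plan is to expand $L_{\{xyu\}\alpha^2(v)}\alpha^2$ and match the outcome, term by term, against the five summands on the right-hand side (this operator then goes into the third slot of \eqref{homjts1}). The starting point is to rewrite the triple product as $\{xyu\} = L_{xy}(u) = (xy)\alpha(u) + L(x,y)(u)$, by unwinding the definition $L_{xy} = L(xy)\alpha + L(x,y)$ from Definition \ref{def:L}. Since $L_{ab}$ is linear in $a$ (by Lemma \ref{lem1:hj}(2) together with the definitions), this gives
\[
L_{\{xyu\}\alpha^2(v)}\alpha^2 = L_{((xy)\alpha(u))\cdot\alpha^2(v)}\alpha^2 + L_{(L(x,y)(u))\cdot\alpha^2(v)}\alpha^2 ,
\]
and I would handle the two summands separately.

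For the first summand I expand $L_{ab} = L(ab)\alpha + L(a,b)$. Two applications of commutativity rewrite $((xy)\alpha(u))\cdot\alpha^2(v)$ as $L(\alpha^2(v))L(\alpha(u))(xy)$, so the product part contributes $L(L(\alpha^2(v))L(\alpha(u))(xy))\alpha^3$, which is the first term on the right; and $L((xy)\alpha(u),\alpha^2(v)) = -L(\alpha^2(v),(xy)\alpha(u))$ by the antisymmetry of Lemma \ref{lem1:hj}(3), which is the fourth term. Thus the first summand produces exactly the first and fourth terms on the right-hand side.

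The second summand is the substantive part. Expanding $L_{ab} = L(ab)\alpha + L(a,b)$ and then $L(a,b) = L(\alpha(a))L(b) - L(\alpha(b))L(a)$, and writing $D = L(x,y)$ for brevity, it equals
\[
L(D(u)\cdot\alpha^2(v))\alpha^3 + L(\alpha D(u))L(\alpha^2(v))\alpha^2 - L(\alpha^3(v))L(D(u))\alpha^2 .
\]
For the last two terms I would move twisting maps around using Lemma \ref{lem1:hj}(4)--(5) ($\alpha D(u) = L(\alpha(x),\alpha(y))(\alpha(u))$ and $L(\alpha^2(v))\alpha^2 = \alpha^2 L(v)$) and then apply Lemma \ref{lem3:hj} in its operator form ($L(D(u))\alpha^2 = L(x,y,u)$ and $L(L(\alpha(x),\alpha(y))\alpha(u))\alpha^2 = L(\alpha(x),\alpha(y),\alpha(u))$); the two terms then collapse to $L(\alpha(x),\alpha(y),\alpha(u))L(v) - L(\alpha^3(v))L(x,y,u)$, which is precisely $L(x,y,u,v)$ by its defining formula in Definition \ref{def:L}. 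That is the fifth term on the right.

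This leaves the single term $L(D(u)\cdot\alpha^2(v))\alpha^3$, and dealing with it is the step I expect to be the main obstacle, since a naive operator-level expansion of the product inside $L(-)$ does not obviously close up. The trick is to read Lemma \ref{lem3:hj} \emph{backwards}, as an identity about elements: evaluating $L(L(x,y)z)\alpha^2 = L(\alpha(x),\alpha(y))L(z) - L(\alpha^2(z))L(x,y)$ on an element and using commutativity gives the twisted Leibniz rule
\[
\alpha^2(u)\cdot L(x,y)(v) + \alpha^2(v)\cdot L(x,y)(u) = L(\alpha(x),\alpha(y))(uv) ,
\]
so that $D(u)\cdot\alpha^2(v) = \alpha^2(v)\cdot D(u) = L(\alpha(x),\alpha(y))(uv) - \alpha^2(u)\cdot D(v)$. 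Postcomposing $L(-)$ with $\alpha^3$ then splits this term into the second term $L(L(\alpha(x),\alpha(y))(uv))\alpha^3$ and minus the third term $L(L(\alpha^2(u))L(x,y)(v))\alpha^3$. Assembling the five pieces finishes the proof; beyond recognizing that the awkward product must be resolved via the element form of Lemma \ref{lem3:hj}, everything else is routine bookkeeping with Lemma \ref{lem1:hj} and the definitions of $L(x,y)$, $L(x,y,z)$, and $L(x,y,u,v)$.
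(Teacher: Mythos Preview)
Your proof is correct and follows essentially the same route as the paper's. The only organizational difference is the order of expansion: you split $\{xyu\} = (xy)\alpha(u) + L(x,y)(u)$ first and use linearity of $L_{ab}$ in $a$, whereas the paper first expands $L_{ab}\alpha^2 = L(ab)\alpha^3 + L(a,b)\alpha^2$ and then substitutes the decomposition of $\{xyu\}$; the resulting terms are identical, and both arguments hinge on Lemma~\ref{lem3:hj} in the same places (your ``twisted Leibniz rule'' is precisely the element-level content of $L(x,y,u)(v)$ expanded via Definition~\ref{def:L}, which is what the paper invokes at the end).
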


\begin{proof}
By definition and Lemmas \ref{lem1:hj} and \ref{lem3:hj}, we have
\begin{equation}
\label{C'}
\begin{split}
L_{\{xyu\}\alpha^2(v)}\alpha^2
&= L(\{xyu\}\alpha^2(v))\alpha^3 + L(\{xyu\},\alpha^2(v))\alpha^2\\
&= L(L(L(xy)(\alpha(u)))(\alpha^2(v)))\alpha^3 + L(L(L(x,y)(u))(\alpha^2(v)))\alpha^3\\
&\relphantom{} + L(\alpha\{xyu\})L(\alpha^2(v))\alpha^2 - L(\alpha^3(v))L(\{xyu\})\alpha^2\\
&= L(L(\alpha^2(v))L(\alpha(u))(xy))\alpha^3 + L(L(x,y,u)(v))\alpha^3\\
&\relphantom{} + L(\alpha(xy)\alpha^2(u))L(\alpha^2(v))\alpha^2 + L(\alpha(x),\alpha(y),\alpha(u))L(v)\\
&\relphantom{} - L(\alpha^3(v))L((xy)\alpha(u))\alpha^2 - L(\alpha^3(v))L(x,y,u).
\end{split}
\end{equation}
In the last equality in \eqref{C'}, we used the fact
\[
L(\{xyu\})
= L(L(xy)(\alpha(u))) + L(L(x,y)(u)).
\]
Together with Lemma \ref{lem3:hj}, this implies
\[
- L(\alpha^3(v))L(\{xyu\})\alpha^2
= - L(\alpha^3(v))L((xy)\alpha(u))\alpha^2  - L(\alpha^3(v))L(x,y,u)
\]
and
\[
\begin{split}
L(\alpha\{xyu\})L(\alpha^2(v))\alpha^2
&= L(\{\alpha(x)\alpha(y)\alpha(u)\})\alpha^2 L(v)\\
&= L(L(\alpha(x)\alpha(y))(\alpha^2(u)))\alpha^2 L(v) + L(L(\alpha(x),\alpha(y))(\alpha(u)))\alpha^2 L(v)\\
&= L(\alpha(xy)\alpha^2(u))L(\alpha^2(v))\alpha^2 +  L(\alpha(x),\alpha(y),\alpha(u))L(v).
\end{split}
\]
The Lemma follows from \eqref{C'} by Definition \ref{def:L}.
\end{proof}

With an argument similar to the proof of Lemma \ref{lem7:hj}, we compute the fourth term in \eqref{homjts1}.

\begin{lemma}
\label{lem8:hj}
Let $(A,\mu,\alpha)$ be a multiplicative Hom-Jordan algebra.  Then
\[
\begin{split}
L_{\alpha^2(u)\{yxv\}}\alpha^2
&= L(L(\alpha^2(u))L(\alpha(v))(xy))\alpha^3 - L(L(\alpha^2(u))L(x,y)(v))\alpha^3\\
&\relphantom{} + L(\alpha^2(u),\alpha(v)(xy))\alpha^2 - L(y,x,v,u)
\end{split}
\]
for all $u,v,x,y \in A$, where $\{,,\}$ is the triple product in \eqref{hjtriple}.
\end{lemma}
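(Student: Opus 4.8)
The plan is to follow the proof of Lemma~\ref{lem7:hj} almost verbatim, since the two statements are parallel: there one analyzes $L_{\{xyu\}\alpha^2(v)}\alpha^2$, and here one analyzes $L_{\alpha^2(u)\{yxv\}}\alpha^2$. I would begin by applying Definition~\ref{def:L} to the outer operator, writing
\[
L_{\alpha^2(u)\{yxv\}}\alpha^2 = L\bigl(\alpha^2(u)\{yxv\}\bigr)\alpha^3 + L\bigl(\alpha^2(u),\{yxv\}\bigr)\alpha^2,
\]
and then expanding the triple product via $\{yxv\} = L_{yx}(v) = (yx)\alpha(v) + L(y,x)(v)$, using commutativity ($yx = xy$) together with the antisymmetry $L(y,x) = -L(x,y)$ from Lemma~\ref{lem1:hj}.

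For the first summand I would move $\alpha^2(u)$ inside as $L(\alpha^2(u))$ and use commutativity to replace $(xy)\alpha(v)$ by $L(\alpha(v))(xy)$; this produces directly the first two terms $L(L(\alpha^2(u))L(\alpha(v))(xy))\alpha^3 - L(L(\alpha^2(u))L(x,y)(v))\alpha^3$ of the claimed formula. For the second summand I would expand $L(a,b) = L(\alpha(a))L(b) - L(\alpha(b))L(a)$, compute $\alpha\{yxv\} = \{\alpha(y)\alpha(x)\alpha(v)\}$ by multiplicativity, and re-expand this twisted triple product as $\alpha(xy)\alpha^2(v) - L(\alpha(x),\alpha(y))(\alpha(v))$. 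The essential tool here is the Hom-derivation identity of Lemma~\ref{lem3:hj}: it turns $L(L(x,y)(v))\alpha^2$ into $L(x,y,v)$ and, after twisting, $L(L(\alpha(x),\alpha(y))(\alpha(v)))\alpha^2$ into $L(\alpha(x),\alpha(y),\alpha(v))$. Combined with $L(\alpha^2(u))\alpha^2 = \alpha^2 L(u)$ (Lemma~\ref{lem1:hj}), this collapses $L(\alpha^2(u),\{yxv\})\alpha^2$ down to
\[
L(\alpha^3(u))L((xy)\alpha(v))\alpha^2 - L(\alpha^3(u))L(x,y,v) - L(\alpha(xy)\alpha^2(v))L(\alpha^2(u))\alpha^2 + L(\alpha(x),\alpha(y),\alpha(v))L(u).
\]

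It then remains to match these pieces with the claimed right-hand side. The two terms $L(\alpha^3(u))L((xy)\alpha(v))\alpha^2 - L(\alpha(xy)\alpha^2(v))L(\alpha^2(u))\alpha^2$ recombine, via the expansion of $L(a,b)$ and commutativity $\alpha(v)(xy) = (xy)\alpha(v)$, into $L(\alpha^2(u),\alpha(v)(xy))\alpha^2$. The remaining two terms $-L(\alpha^3(u))L(x,y,v) + L(\alpha(x),\alpha(y),\alpha(v))L(u)$ should be recognized as $-L(y,x,v,u)$: by Definition~\ref{def:L}, $L(y,x,v,u) = L(\alpha(y),\alpha(x),\alpha(v))L(u) - L(\alpha^3(u))L(y,x,v)$, and the antisymmetries $L(\alpha(y),\alpha(x),\alpha(v)) = -L(\alpha(x),\alpha(y),\alpha(v))$ and $L(y,x,v) = -L(x,y,v)$ (both immediate from Lemma~\ref{lem1:hj}(3) and Definition~\ref{def:L}) then give exactly the required equality. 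This completes the argument.

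I do not anticipate a genuine conceptual obstacle: the proof uses nothing beyond Definition~\ref{def:L}, Lemma~\ref{lem1:hj}, and Lemma~\ref{lem3:hj}, and in fact the first summand requires no appeal to Lemma~\ref{lem3:hj} at all, unlike in Lemma~\ref{lem7:hj}. The one thing to be careful about is the bookkeeping: keeping the composition order of the nested $L$'s straight, inserting the correct power of $\alpha$ when commuting $\alpha^k$ past an $L(a)$, and above all getting the signs right when invoking the antisymmetry of $L(y,x)$ and $L(y,x,v)$, so that the four-fold operator emerges as $-L(y,x,v,u)$ rather than $+L(y,x,v,u)$.
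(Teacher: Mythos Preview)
Your proposal is correct and follows essentially the same route as the paper: the paper also splits $L_{\alpha^2(u)\{yxv\}}\alpha^2$ into its $L(ab)\alpha$ and $L(a,b)$ parts (writing the latter as $-L(\{yxv\},\alpha^2(u))\alpha^2$), expands $\{yxv\}$, invokes Lemma~\ref{lem3:hj} to produce the $L(y,x,v)$ and $L(\alpha(y),\alpha(x),\alpha(v))$ operators, and then closes with commutativity, Definition~\ref{def:L}, and Lemma~\ref{lem1:hj}. Your explicit identification of $-L(y,x,v,u)$ via the antisymmetries is exactly what the paper leaves to the reader in its final sentence.
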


\begin{proof}
As in the proof of Lemma \ref{lem7:hj}, we compute as follows:
\[
\begin{split}
L_{\alpha^2(u)\{yxv\}}\alpha^2
&= L(\alpha^2(u)\{yxv\})\alpha^3 - L(\{yxv\},\alpha^2(u))\alpha^2\\
&= L(L(\alpha^2(u))L(yx)(\alpha(v)))\alpha^3 + L(L(\alpha^2(u))L(y,x)(v))\alpha^3\\
&\relphantom{} + L(\alpha^3(u))L((xy)\alpha(v))\alpha^2 - L(\alpha(xy)\alpha^2(v))L(\alpha^2(u))\alpha^2\\
&\relphantom{} + L(\alpha^3(u))L(y,x,v) - L(\alpha(y),\alpha(x),\alpha(v))L(u).
\end{split}
\]
The desired equality now follows from the commutativity of $\mu$, Definition \ref{def:L}, and Lemma \ref{lem1:hj}.
\end{proof}

We are now ready to prove Theorem \ref{thm:hjhjts}.

\begin{proof}[Proof of Theorem \ref{thm:hjhjts}]
It is clear that $A_T = (A,\{,,\},\alpha^2)$ is multiplicative and that the triple product $\{,,\}$ is outer-symmetric, since $\mu$ is commutative.  It remains to prove the Hom-Jordan triple identity \eqref{homjtsid} for $A_T$, which takes the form \eqref{homjts1}.  Using Lemma \ref{lem4:hj} - Lemma \ref{lem8:hj}, the left-hand side of \eqref{homjts1} is equal to
\[
\begin{split}
L(\alpha(xy),\alpha(u)\alpha(v))\alpha^2 &+ L(\alpha^2(v),(xy)\alpha(u))\alpha^2 + L(\alpha^2(u),\alpha(v)(xy))\alpha^2\\
&= \cyclicsum_{(xy,\alpha(u),\alpha(v))}\, L(\alpha(xy),\alpha(u)\alpha(v))\alpha^2,
\end{split}
\]
in which the cyclic sum is taken over $xy$, $\alpha(u)$, and $\alpha(v)$.  This cyclic sum is equal to $0$ by Lemma \ref{lem2:hj}, as desired.
\end{proof}

If $\alpha = Id$ in Theorem \ref{thm:hjhjts}, then we obtain the following well-known method of obtaining a Jordan triple system from a Jordan algebra.

\begin{corollary}
\label{cor1:hj}
Let $A$ be a Jordan algebra.  Then $(A,\{,,\})$ is a Jordan triple system, where
\[
\{xyz\} = x(yz) + (xy)z - y(xz)
\]
for $x,y,z \in A$.
\end{corollary}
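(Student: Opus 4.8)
The plan is to obtain Corollary \ref{cor1:hj} as the special case $\alpha = Id$ of Theorem \ref{thm:hjhjts}. First I would observe that a Jordan algebra $A$, being commutative and satisfying the classical Jordan identity $as_A(x^2,y,x) = 0$, is precisely a multiplicative Hom-Jordan algebra with twisting map $\alpha = Id$: the multiplicativity condition $\alpha\circ\mu = \mu\circ\alpha^{\otimes 2}$ is vacuous when $\alpha = Id$, commutativity is assumed, and the Hom-Jordan identity \eqref{homjordanid}, namely $as_A(x^2,\alpha(y),\alpha(x)) = 0$, collapses to the ordinary Jordan identity. So Theorem \ref{thm:hjhjts} applies.

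Next I would specialize the conclusion. With $\alpha = Id$, the triple product \eqref{hjtriple} becomes $\{xyz\} = x(yz) + (xy)z - y(xz)$, exactly the product in the statement, and the twisting map $\alpha^2 = Id$ as well. Thus Theorem \ref{thm:hjhjts} gives that $A_T = (A,\{,,\},Id)$ is a multiplicative Hom-Jordan triple system. Finally I would invoke the remark made just after Definition \ref{def:hjts}: a Hom-Jordan triple system with both twisting maps equal to the identity is exactly a Jordan triple system (the outer-symmetry \eqref{outersymmetry} is unchanged, and the Hom-Jordan triple identity \eqref{homjtsid} reduces to the Jordan triple identity). Hence $(A,\{,,\})$ is a Jordan triple system, which is the desired conclusion.

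There is essentially no obstacle here: the entire content is the verification that the hypotheses of Theorem \ref{thm:hjhjts} are met by an ordinary Jordan algebra and that the Hom-theoretic conclusion degenerates correctly. The only point requiring a word of care is the matching of the two notions of ``Jordan identity'': the Hom-Jordan identity in \eqref{homjordanid} is written with $\alpha(y)$ and $\alpha(x)$ in the last two slots, so when $\alpha = Id$ it reads $as_A(x^2,y,x)=0$, which is indeed the standard form; and by the linearization of Proposition \ref{prop:homjordanid} (with $\alpha = Id$) this is equivalent to the fully linearized Jordan identity, so no characteristic-zero subtlety is lost. The proof is therefore just the sentence ``apply Theorem \ref{thm:hjhjts} with $\alpha = Id$.''
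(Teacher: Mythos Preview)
Your proposal is correct and follows exactly the paper's approach: the corollary is stated immediately after Theorem \ref{thm:hjhjts} with the remark that it is the special case $\alpha = Id$, and no further argument is given. Your additional verification that the Hom-Jordan identity and Hom-Jordan triple identity reduce to their classical counterparts when $\alpha = Id$ is accurate and makes the specialization explicit.
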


\section{Hom-Nambu algebras from Hom-Jordan and Hom-alternative algebras}
\label{sec:homn}

In this section, we discuss how Hom-Nambu algebras (Definition \ref{def:homnambulie}) of different arities arise from Hom-Jordan and Hom-alternative algebras.

Recall from Theorem \ref{thm:jtslts} that every Hom-Jordan triple system with equal twisting maps has an underlying Hom-Lie triple system. Therefore, combining Theorems \ref{thm:jtslts} and \ref{thm:hjhjts}, we obtain the following method of constructing a Hom-Lie triple system, and hence a ternary Hom-Nambu algebra, from a Hom-Jordan algebra.

\begin{corollary}
\label{cor2:hj}
Let $(A,\mu,\alpha)$ be a multiplicative Hom-Jordan algebra.   Then
\[
A_L = (A,[,,],\alpha^2)
\]
is a multiplicative Hom-Lie triple system, where
\begin{equation}
\label{albracket}
[xyz] = 2\left(\alpha(x)(yz) - \alpha(y)(xz)\right)
\end{equation}
for $x,y,z \in A$.
\end{corollary}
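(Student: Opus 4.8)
The plan is to combine the two main theorems that precede this corollary. By Theorem \ref{thm:hjhjts}, the multiplicative Hom-Jordan algebra $(A,\mu,\alpha)$ gives rise to a multiplicative Hom-Jordan triple system $A_T = (A,\{,,\},\alpha^2)$ with the triple product $\{xyz\} = \alpha(x)(yz) + (xy)\alpha(z) - \alpha(y)(xz)$. Since $A_T$ has equal twisting maps (both equal to $\alpha^2$), Theorem \ref{thm:jtslts} applies and produces a multiplicative Hom-Lie triple system $L(A_T) = (A,[,,]',\alpha^2)$ whose triple product is $[xyz]' = \{xyz\} - \{yxz\}$. So the structural part of the statement — that we obtain a multiplicative Hom-Lie triple system with twisting map $\alpha^2$ — is immediate.

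The only thing left is to verify that the bracket defined in \eqref{albracket} is (a scalar multiple of) $[xyz]'$. First I would compute $[xyz]' = \{xyz\} - \{yxz\}$ directly from \eqref{hjtriple}:
\[
\{xyz\} - \{yxz\} = \bigl(\alpha(x)(yz) + (xy)\alpha(z) - \alpha(y)(xz)\bigr) - \bigl(\alpha(y)(xz) + (yx)\alpha(z) - \alpha(x)(yz)\bigr).
\]
Using commutativity of $\mu$, the two terms $(xy)\alpha(z)$ and $(yx)\alpha(z)$ cancel, and the remaining terms combine to give $2\alpha(x)(yz) - 2\alpha(y)(xz) = 2(\alpha(x)(yz) - \alpha(y)(xz))$, which is exactly the right-hand side of \eqref{albracket}. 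Hence $[xyz] = [xyz]'$, and since a Hom-Lie triple system structure is preserved under rescaling the triple product by a nonzero scalar (the defining identities \eqref{homltsid} and \eqref{homnambu} are all homogeneous of degree one in the bracket — actually $[xyz] = [xyz]'$ here, so no rescaling argument is even needed), $A_L = (A,[,,],\alpha^2)$ is a multiplicative Hom-Lie triple system.

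There is essentially no obstacle here: the corollary is a formal consequence of Theorems \ref{thm:hjhjts} and \ref{thm:jtslts} together with a one-line simplification using commutativity of $\mu$. The only point requiring any care is the bookkeeping of the twisting map — one must note that the twisting map of $A_T$, and therefore of $L(A_T)$, is $\alpha^2$ rather than $\alpha$, consistent with the remark following Theorem \ref{thm:hjhjts}. I would close by remarking that the multiplicativity of $A_L$ follows from the "moreover" clauses in both theorems, since $A$ is assumed multiplicative.
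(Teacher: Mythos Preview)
Your proof is correct and follows essentially the same approach as the paper: combine Theorems \ref{thm:jtslts} and \ref{thm:hjhjts}, then simplify $\{xyz\} - \{yxz\}$ using commutativity of $\mu$ to obtain \eqref{albracket}. (One small aside: the ternary Hom-Nambu identity \eqref{homnambu} is homogeneous of degree two in the bracket, not one, but as you yourself note this is irrelevant since $[xyz]=[xyz]'$ exactly.)
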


\begin{proof}
The expression for the triple product $[,,]$ is obtained from \eqref{hjtriple} by switching $x$ and $y$ and subtracting the result from \eqref{hjtriple}.
\end{proof}

It is shown in \cite{yau13} that every multiplicative $n$-ary Hom-Nambu algebra gives rise to a sequence of Hom-Nambu algebras of exponentially higher arities.  More precisely, we recall the following result.  We use the abbreviations in \eqref{xij}.

\begin{theorem}[\cite{yau13}]
\label{thm:higher}
Let $(L,\bracket,\alpha)$ be a multiplicative $n$-ary Hom-Nambu algebra.  For $k \geq 0$ define the $(2^k(n-1)+1)$-ary product $\bracket^{(k)}$ inductively by setting
\[
\begin{split}
\bracket^{(0)} &= \bracket,\\
[x_{1,2^k(n-1)+1}]^{(k)} &= [[x_{1,2^{k-1}(n-1)+1}]^{(k-1)}, \alpha^{2^{k-1}}(x_{2^{k-1}(n-1)+2,2^k(n-1)+1})]^{(k-1)}
\end{split}
\]
for $k \geq 1$ and $x_i \in L$.  Then
\[
L^k = (L,\bracket^{(k)},\alpha^{2^k})
\]
is a multiplicative $(2^k(n-1)+1)$-ary Hom-Nambu algebra for each $k \geq 0$.
\end{theorem}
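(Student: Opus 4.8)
The plan is to prove Theorem~\ref{thm:higher} by induction on $k$, reducing the general case to the single inductive step $k-1 \rightsquigarrow k$. The base case $k=0$ is vacuous since $L^0 = (L,\bracket,\alpha)$ is the given multiplicative $n$-ary Hom-Nambu algebra. For the inductive step, I would set $p = 2^{k-1}(n-1)$, so that $\bracket^{(k-1)}$ is a $(p+1)$-ary product and $\bracket^{(k)}$ is a $(2p+1)$-ary product, and write $M = (L,\bracket^{(k-1)},\alpha^{2^{k-1}})$, which by the inductive hypothesis is a multiplicative $(p+1)$-ary Hom-Nambu algebra. The content to be established is then: if $M = (L,\{\,\cdot\,\},\beta)$ is \emph{any} multiplicative $m$-ary Hom-Nambu algebra (with $m = p+1$, $\beta = \alpha^{2^{k-1}}$), then the $(2m-1)$-ary product
\[
\langle x_{1,2m-1}\rangle = \{\{x_{1,m}\},\beta(x_{m+1,2m-1})\}
\]
together with the twisting map $\beta^2$ forms a multiplicative $(2m-1)$-ary Hom-Nambu algebra. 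Once this ``doubling'' lemma is in hand, applying it with $m = p+1$ and $\beta = \alpha^{2^{k-1}}$ yields exactly $L^k = (L,\bracket^{(k)},\alpha^{2^k})$, completing the induction.

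First I would check multiplicativity of the doubled structure: since $\beta \circ \{\,\cdot\,\} = \{\,\cdot\,\} \circ \beta^{\otimes m}$, a direct substitution gives $\beta^2 \circ \langle\,\cdot\,\rangle = \langle\,\cdot\,\rangle \circ (\beta^2)^{\otimes(2m-1)}$, using that $\beta$ commutes with itself. This is routine. The real work is verifying the $(2m-1)$-ary Hom-Nambu identity $J^{2m-1}_{\langle\,\rangle} = 0$, i.e.\ that for all $x_1,\dots,x_{2m-2}$ and $y_1,\dots,y_{2m-1}$ in $L$,
\[
\langle \beta_{1,2m-2}^2(x_{1,2m-2}),\langle y_{1,2m-1}\rangle\rangle
= \sum_{i=1}^{2m-1}\langle \beta^2(y_{1,i-1}),\langle x_{1,2m-2},y_i\rangle,\beta^2(y_{i+1,2m-1})\rangle .
\]
The strategy is to expand every outer $\langle\,\cdot\,\rangle$ into its definition in terms of $\{\,\cdot\,\}$ and $\beta$, so that both sides become sums of doubly-nested $\{\,\cdot\,\}$-expressions, and then show that the discrepancy is a sum of instances of the $m$-ary Hom-Nambu identity $J^m_{\{\,\}} = 0$ for $M$ --- applied once with the ``inner slot'' of $\langle\,\cdot\,\rangle$ playing the role of the nested bracket, and once with the ``outer slots'' playing that role. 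Concretely, I expect that after expansion the left-hand side splits according to whether the nested $\langle y_{1,2m-1}\rangle$ sits inside the first $m$-ary factor or the trailing arguments, and each piece gets rewritten using $J^m_{\{\,\}}=0$; on the right-hand side the sum over $i$ naturally partitions into $1 \le i \le m$ (the edited argument lands in the first factor) and $m+1 \le i \le 2m-1$ (it lands among the trailing $\beta$-twisted arguments), and these two blocks should match the two applications of $J^m_{\{\,\}}=0$ after using multiplicativity to move $\beta$'s past $\{\,\cdot\,\}$.

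The main obstacle I anticipate is purely organizational: keeping track of exactly where each $\beta$-power sits and ensuring the twisting maps line up so that an honest instance of the $m$-ary Hom-Nambu identity (whose statement has a rigid pattern of $\beta$'s on the arguments) can be invoked. In particular, when the nested bracket $\langle y_{1,2m-1}\rangle$ appears inside $\{x_{1,m}\}$, one must apply $J^m_{\{\,\}}=0$ with $\{y_{1,m}\}$ as the ``inner'' bracket and then separately account for the trailing $\beta(y_{m+1,2m-1})$ arguments --- which themselves require a second, independent application of the $m$-ary identity. Getting the $\beta$-powers to agree on both occurrences forces repeated use of multiplicativity $\beta\{\,\cdot\,\} = \{\,\cdot\,\}\beta^{\otimes m}$, and it is here that the choice of $\beta^2$ (rather than $\beta$) as the twisting map of the doubled algebra becomes essential: the extra power of $\beta$ is precisely what is absorbed when one $\beta$-twists the output of the inner $\{\,\cdot\,\}$ to feed it into the outer one. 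Since this is the inductive engine behind Theorem~\ref{thm:higher}, and since the fully twisted bookkeeping is exactly the kind of calculation the paper elsewhere defers to \cite{yau13}, I would present the reduction to the doubling lemma cleanly and then carry out the two-fold application of $J^m_{\{\,\}}=0$ with the index partition $\{1,\dots,m\} \sqcup \{m+1,\dots,2m-1\}$ made explicit, leaving the term-by-term matching as the (routine but lengthy) verification.
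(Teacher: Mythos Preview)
The paper does not actually prove Theorem~\ref{thm:higher}: it is quoted from \cite{yau13} and no argument is given here. So there is no in-paper proof to compare your proposal against.

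That said, your inductive reduction to a ``doubling lemma'' is the natural approach and is essentially what one expects the proof in \cite{yau13} to be. Your sketch is correct in outline: expanding the left-hand side of the $(2m-1)$-ary Hom-Nambu identity for $\langle\,\cdot\,\rangle$ and applying the $m$-ary identity for $\{\,\cdot\,\}$ once (with $u_1=\beta\{x_{1,m}\}$, $u_j=\beta^2(x_{m+j-1})$ and $v_1=\{\beta(y_{1,m})\}$, $v_j=\beta^2(y_{m+j-1})$) yields an $i=1$ term plus terms with $i\ge 2$. The $i\ge 2$ terms, after one use of multiplicativity, are exactly the summands of the right-hand side with index $m+1\le \ell\le 2m-1$; the $i=1$ term requires a second application of the $m$-ary identity (now with inner bracket $\{\beta(y_{1,m})\}$) and produces the summands with $1\le \ell\le m$. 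The $\beta$-powers do line up, and your observation that $\beta^2$ is forced as the new twisting map is exactly right. There is no gap; the only thing to tighten in a write-up is to make the two substitutions into $J^m_{\{\,\}}=0$ explicit rather than leaving them as ``I expect that after expansion\ldots''.
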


In particular, when $L$ is ternary, $L^k$ is $(2^{k+1}+1)$-ary.  Therefore, combining Corollary \ref{cor2:hj} and Theorem \ref{thm:higher}, we obtain a sequence of Hom-Nambu algebras of different arities from a multiplicative Hom-Jordan algebra.

\begin{corollary}
\label{cor1:higher}
Let $(A,\mu,\alpha)$ be a multiplicative Hom-Jordan algebra.  For $k \geq 0$ define the $(2^{k+1}+1)$-ary product $\bracket^{(k)}$ inductively by setting
\[
[x_1,x_2,x_3]^{(0)} = 2\left(\alpha(x_1)(x_2x_3) - \alpha(x_2)(x_1x_3)\right)
\]
as in \eqref{albracket} and
\[
[x_{1,2^{k+1}+1}]^{(k)} = [[x_{1,2^k+1}]^{(k-1)},\alpha^{2^k}(x_{2^k+2,2^{k+1}+1})]^{(k-1)}
\]
for $k \geq 1$ and $x_i \in A$.  Then
\[
A_L^k = (A,\bracket^{(k)},\alpha^{2^{k+1}})
\]
is a multiplicative $(2^{k+1}+1)$-ary Hom-Nambu algebra for each $k \geq 0$.
\end{corollary}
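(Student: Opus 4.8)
The plan is to obtain this corollary with essentially no new work, by composing two results already in hand: Corollary~\ref{cor2:hj}, which turns the multiplicative Hom-Jordan algebra $(A,\mu,\alpha)$ into a multiplicative ternary Hom-Nambu algebra, and Theorem~\ref{thm:higher}, which produces from any multiplicative $n$-ary Hom-Nambu algebra a whole sequence of Hom-Nambu algebras of arities $2^k(n-1)+1$.

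First I would invoke Corollary~\ref{cor2:hj} to obtain the multiplicative Hom-Lie triple system $A_L = (A,[,,],\alpha^2)$ with $[xyz] = 2(\alpha(x)(yz) - \alpha(y)(xz))$ as in \eqref{albracket}; as observed just after Definition~\ref{def:homnambulie}, a Hom-Lie triple system is in particular a ternary Hom-Nambu algebra, so $A_L$ is a multiplicative ternary Hom-Nambu algebra. Then I would apply Theorem~\ref{thm:higher} to $L = A_L$, i.e.\ with $n = 3$ and common twisting map $\alpha^2$.

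The only remaining work is to check that the index data in Theorem~\ref{thm:higher} collapses to that of the present statement once $n = 3$. Since $n - 1 = 2$, the arity $2^k(n-1)+1$ becomes $2^{k+1}+1$; the iterated twisting map $(\alpha^2)^{2^k}$ becomes $\alpha^{2^{k+1}}$; and the recursion break point $2^{k-1}(n-1)+1$ becomes $2^k+1$, so that the inductive formula
\[
[x_{1,2^k(n-1)+1}]^{(k)} = [[x_{1,2^{k-1}(n-1)+1}]^{(k-1)},\, (\alpha^2)^{2^{k-1}}(x_{2^{k-1}(n-1)+2,\,2^k(n-1)+1})]^{(k-1)}
\]
of Theorem~\ref{thm:higher} becomes exactly the recursion
\[
[x_{1,2^{k+1}+1}]^{(k)} = [[x_{1,2^k+1}]^{(k-1)},\, \alpha^{2^k}(x_{2^k+2,\,2^{k+1}+1})]^{(k-1)}
\]
asserted in Corollary~\ref{cor1:higher}, with base case $\bracket^{(0)} = [,,]$. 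Theorem~\ref{thm:higher} then yields that $A_L^k = (A,\bracket^{(k)},\alpha^{2^{k+1}})$ is a multiplicative $(2^{k+1}+1)$-ary Hom-Nambu algebra for every $k \geq 0$, which is the assertion.

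Since everything reduces to quoting Corollary~\ref{cor2:hj} and Theorem~\ref{thm:higher}, there is no genuine analytic obstacle. The one place that needs care is the bookkeeping of the exponents of $\alpha$ and of the arity/break-point indices when specializing the general $n$ of Theorem~\ref{thm:higher} to $n = 3$: in particular one must remember that the twisting map of $A_L$ is $\alpha^2$ rather than $\alpha$, so that its $2^k$-th iterate is $\alpha^{2^{k+1}}$ and not $\alpha^{2^k}$. Once this is pinned down, the corollary follows immediately.
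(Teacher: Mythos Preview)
Your proposal is correct and follows exactly the paper's approach: the paper states that Corollary~\ref{cor1:higher} is obtained by ``combining Corollary~\ref{cor2:hj} and Theorem~\ref{thm:higher}'' with $L$ ternary, and you carry out precisely that specialization, with the added bonus of carefully checking the index and exponent bookkeeping (in particular that the twisting map of $A_L$ is $\alpha^2$).
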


We have $A_L^0 = A_L = (A,[,,],\alpha^2)$ as in Corollary \ref{cor2:hj}.  Let us discuss $A_L^1$ and $A_L^2$ in the following example.

\begin{example}
\label{ex:jhigher}
Let $(A,\mu,\alpha)$ be a multiplicative Hom-Jordan algebra.  By Corollary \ref{cor1:higher} there is a multiplicative $5$-ary Hom-Nambu algebra
\[
A_L^1 = (A,\bracket^{(1)},\alpha^4)
\]
with
\[
[x_1,\ldots,x_5]^{(1)} = [[x_1,x_2,x_3],\alpha^2(x_4),\alpha^2(x_5)].
\]
Using multiplicativity and \eqref{albracket}
\[
[xyz] = 2\left(\alpha(x)(yz) - \alpha(y)(xz)\right),
\]
we can express $\bracket^{(1)}$ in terms of $\mu$ and $\alpha$ as:
\[
\begin{split}
\frac{1}{4}[x_{1,5}]^{(1)}
&= \alpha(\alpha(x_1)(x_2x_3))(\alpha^2(x_4x_5)) - \alpha^3(x_4)\left((\alpha(x_1)(x_2x_3))\alpha^2(x_5)\right)\\
&\relphantom{} - \alpha(\alpha(x_2)(x_1x_3))(\alpha^2(x_4x_5)) + \alpha^3(x_4)\left((\alpha(x_2)(x_1x_3))\alpha^2(x_5)\right).
\end{split}
\]
Likewise, by Corollary \ref{cor1:higher} there is a multiplicative $9$-ary Hom-Nambu algebra
\[
A_L^2 = (A,\bracket^{(2)},\alpha^8)
\]
with
\[
[x_{1,9}]^{(2)} = [[x_{1,5}]^{(1)},\alpha^4(x_{6,9})]^{(1)}.
\]
One can expand $\bracket^{(2)}$ in terms of $\mu$ and $\alpha$ as above.  There are sixteen terms in the expanded expression.
\qed
\end{example}

Recall from Theorem \ref{thm:homalt} that the plus Hom-algebra of a multiplicative Hom-alternative algebra is a multiplicative Hom-Jordan algebra.  Therefore, combining Theorems \ref{thm:homalt} and \ref{thm:hjhjts}, we obtain the following method of constructing a Hom-Jordan triple system from a Hom-alternative algebra.

\begin{corollary}
\label{cor3:hj}
Let $(A,\mu,\alpha)$ be a multiplicative Hom-alternative algebra.  Then
\[
A^+_T = (A,\{,,\},\alpha^2)
\]
is a multiplicative Hom-Jordan triple system, where
\begin{equation}
\label{startriple}
\{xyz\} = \frac{1}{2}\left(\alpha(x)(yz) + \alpha(z)(yx)\right)
\end{equation}
for $x,y,z \in A$.
\end{corollary}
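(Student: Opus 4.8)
The plan is to derive Corollary \ref{cor3:hj} by combining Theorem \ref{thm:homalt} with Theorem \ref{thm:hjhjts}, so that essentially all the work reduces to identifying the triple product \eqref{startriple} as the triple product \eqref{hjtriple} applied to the plus Hom-algebra $A^+$. First I would invoke Theorem \ref{thm:homalt}: since $(A,\mu,\alpha)$ is a multiplicative Hom-alternative algebra, its plus Hom-algebra $A^+ = (A,\ast,\alpha)$ with $\ast = (\mu+\muop)/2$ is a multiplicative Hom-Jordan algebra. Then I would apply Theorem \ref{thm:hjhjts} to $A^+$: this yields that $(A^+)_T = (A,\{,,\}_\ast,\alpha^2)$ is a multiplicative Hom-Jordan triple system, where, writing $x\ast y$ for the plus product,
\[
\{xyz\}_\ast = \alpha(x)\ast(y\ast z) + (x\ast y)\ast\alpha(z) - \alpha(y)\ast(x\ast z).
\]
It then remains only to verify that $\{xyz\}_\ast$ coincides with the expression \eqref{startriple} in terms of the original product $\mu$.

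For that verification I would expand each $\ast$-product using $u\ast v = \tfrac12(uv+vu)$ and simplify. The key input is that $A$ is Hom-flexible (being Hom-alternative), so $as_A(x,y,x)=0$, i.e. $(xy)\alpha(x) = \alpha(x)(yx)$, and more generally its linearization $(xy)\alpha(z) + (zy)\alpha(x) = \alpha(x)(yz) + \alpha(z)(yx)$ holds; these identities are what collapse the fully symmetrized nine-term expansion of $\{xyz\}_\ast$ down to the two-term form \eqref{startriple}. Concretely, the term $(x\ast y)\ast\alpha(z)$ contributes the symmetric combination $\tfrac14\big((xy)\alpha(z)+(yx)\alpha(z)+\alpha(z)(xy)+\alpha(z)(yx)\big)$, while $\alpha(x)\ast(y\ast z) - \alpha(y)\ast(x\ast z)$ contributes a combination that, after applying Hom-flexibility to trade terms of the form $(\cdot\,\cdot)\alpha(\cdot)$ for $\alpha(\cdot)(\cdot\,\cdot)$, cancels against part of the previous contribution and leaves precisely $\tfrac12\big(\alpha(x)(yz)+\alpha(z)(yx)\big)$. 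I would also note that $x^2$ and hence the role of $x$ in the Hom-Jordan identity is unaffected by passing from $\mu$ to $\ast$, as already remarked after Theorem \ref{thm:homalt}, so no extra hypotheses are needed.

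The main obstacle is purely the bookkeeping in this expansion: one must be careful to use multiplicativity of $\alpha$ (to push $\alpha$ past products, as in $\alpha(x)\ast\alpha(y) = \alpha(x\ast y)$ and $\alpha(uv)=\alpha(u)\alpha(v)$) and to apply the linearized Hom-flexibility identity in the correct slots, since a naive expansion produces many terms of mixed shape. There is no conceptual difficulty beyond Theorems \ref{thm:homalt} and \ref{thm:hjhjts}, which do all the heavy lifting; the corollary is really just the composite of those two constructions together with a short computation rewriting the resulting triple product. Once the identity $\{xyz\}_\ast = \tfrac12\big(\alpha(x)(yz)+\alpha(z)(yx)\big)$ is confirmed, multiplicativity and outer-symmetry of $A^+_T$ are inherited directly from Theorem \ref{thm:hjhjts}, and the proof is complete.
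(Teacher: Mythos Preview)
Your overall strategy is exactly the paper's: invoke Theorem~\ref{thm:homalt} to pass to the plus Hom-algebra $A^+$, apply Theorem~\ref{thm:hjhjts}, and then rewrite the resulting triple product $\{xyz\}_\ast$ in terms of $\mu$.  That part is fine.

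The slip is in the simplification step.  Hom-flexibility and its linearization $as_A(x,y,z)+as_A(z,y,x)=0$ are \emph{not} enough to collapse the expansion to \eqref{startriple}.  Expanding $\{xyz\}_\ast$ in terms of $\mu$ produces twelve terms (not nine: each of the three $\ast$-iterates contributes four).  After using linearized flexibility, the leftover associator contribution is $-2\,as_A(x,z,y)+2\,as_A(y,x,z)$, and flexibility alone only tells you $as_A(a,b,c)=-as_A(c,b,a)$; it does not force $as_A(x,z,y)=as_A(y,x,z)$.  You need the full anti-symmetry of the Hom-associator, i.e., Hom-alternativity itself, to kill this term.  The paper does exactly this: it groups the twelve terms into six pairs, recognizes each pair as $\pm as_A(\sigma(x,y,z))$ (possibly plus a surviving term), and then uses full anti-symmetry to cancel all six associators.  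Since you already have Hom-alternativity by hypothesis, the fix is immediate --- just replace ``Hom-flexible'' by ``the Hom-associator is alternating'' as the key input --- but as written the sketch would stall.
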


\begin{proof}
The plus Hom-algebra $A^+ = (A,\ast,\alpha)$ is a multiplicative Hom-Jordan algebra (Theorem \ref{thm:homalt}), where $\ast = (\mu + \muop)/2$.  By Theorem \ref{thm:hjhjts} there is a multiplicative Hom-Jordan triple system $(A,\{,,\},\alpha^2)$, where
\[
\{xyz\}
= \alpha(x) \ast (y \ast z) + (x \ast y) \ast \alpha(z) - \alpha(y) \ast (x \ast z).
\]
Expanding this triple product in terms of $\mu$, we obtain:
\begin{equation}
\label{startriple'}
\begin{split}
4\{xyz\}
&= \alpha(x)(yz) + \alpha(x)(zy) + (yz)\alpha(x) + (zy)\alpha(x)\\
&\relphantom{} + (xy)\alpha(z) + (yx)\alpha(z) + \alpha(z)(xy) + \alpha(z)(yx)\\
&\relphantom{} - \alpha(y)(xz) - \alpha(y)(zx) - (xz)\alpha(y) - (zx)\alpha(y).
\end{split}
\end{equation}
Dividing the right-hand side of \eqref{startriple'} into six pairs and using the anti-symmetry of the Hom-associator \eqref{homassociator}, we have:
\begin{equation}
\label{startriple''}
\begin{split}
\alpha(x)(zy) - (xz)\alpha(y) &= -as_A(x,z,y) = as_A(x,y,z),\\
(yz)\alpha(x) - \alpha(y)(zx) &= as_A(y,z,x) = as_A(x,y,z),\\
(yx)\alpha(z) - \alpha(y)(xz) &= as_A(y,x,z) = -as_A(x,y,z),\\
\alpha(z)(xy) - (zx)\alpha(y) &= -as_A(z,x,y) = -as_A(x,y,z),\\
\alpha(x)(yz) + (xy)\alpha(z) &= as_A(x,y,z) + 2\alpha(x)(yz),\\
(zy)\alpha(x) + \alpha(z)(yx) &= as_A(z,y,x) + 2\alpha(z)(yx)\\
&= -as_A(x,y,z) + 2\alpha(z)(yx).
\end{split}
\end{equation}
The desired equality \eqref{startriple} is obtained by using \eqref{startriple''} in \eqref{startriple'}.
\end{proof}

Combining Theorem \ref{thm:jtslts} with Corollary \ref{cor3:hj}, we obtain the following method of constructing a Hom-Lie triple system, and hence a ternary Hom-Nambu algebra, from a multiplicative Hom-alternative algebra.

\begin{corollary}
\label{cor4:hj}
Let $(A,\mu,\alpha)$ be a multiplicative Hom-alternative algebra.  Then
\[
A^+_L = (A,[,,],\alpha^2)
\]
is a multiplicative Hom-Lie triple system, where
\begin{equation}
\label{starlproduct}
[xyz] = \frac{1}{2}[[x,y],\alpha(z)] - as_A(x,y,z)
\end{equation}
with $[,] = \mu - \muop$ the commutator bracket of $\mu$ and $as_A$ the Hom-associator \eqref{homassociator}.
\end{corollary}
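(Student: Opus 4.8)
The plan is to derive this corollary by composing two results already established: Corollary~\ref{cor3:hj}, which turns the multiplicative Hom-alternative algebra $A$ into a multiplicative Hom-Jordan triple system, and Theorem~\ref{thm:jtslts}, which turns any Hom-Jordan triple system with equal twisting maps into a Hom-Lie triple system. Both constructions are already known to preserve multiplicativity, so the structural assertion will be immediate; the only real work will be to identify the resulting triple product with the closed form \eqref{starlproduct}.

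First I would apply Corollary~\ref{cor3:hj} to obtain the multiplicative Hom-Jordan triple system $A^+_T = (A,\{,,\},\alpha^2)$ with
\[
\{xyz\} = \frac{1}{2}\bigl(\alpha(x)(yz) + \alpha(z)(yx)\bigr).
\]
Its two twisting maps are both equal to $\alpha^2$, so Theorem~\ref{thm:jtslts} applies and produces the multiplicative Hom-Lie triple system $(A,[,,],\alpha^2)$ whose triple product is $[xyz] = \{xyz\} - \{yxz\}$. Substituting the formula above for $\{xyz\}$, and the same formula with $x$ and $y$ swapped for $\{yxz\}$, this becomes
\[
[xyz] = \frac{1}{2}\bigl(\alpha(x)(yz) - \alpha(y)(xz) + \alpha(z)(yx) - \alpha(z)(xy)\bigr).
\]
At this stage $A^+_L$ has already been recognized as a multiplicative Hom-Lie triple system, and what remains is purely a rewriting of the right-hand side.

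The final step is a short direct computation. Expanding $[,] = \mu - \muop$ and $as_A = \mu \circ (\mu \otimes \alpha - \alpha \otimes \mu)$ and collecting terms, one finds
\[
\frac{1}{2}[[x,y],\alpha(z)] - as_A(x,y,z) - [xyz] = -\frac{1}{2}\,as_A(x,y,z) - \frac{1}{2}\,as_A(y,x,z).
\]
Since $A$ is Hom-alternative, the Hom-associator is anti-symmetric, so $as_A(y,x,z) = -as_A(x,y,z)$ and the right-hand side vanishes; this gives \eqref{starlproduct}. The one place to be careful is the sign bookkeeping when the two ``leftover'' associator terms are combined --- invoking anti-symmetry precisely there is what makes them cancel --- but this is routine, so I do not anticipate any genuine obstacle.
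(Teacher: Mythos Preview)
Your proposal is correct and follows essentially the same route as the paper: apply Corollary~\ref{cor3:hj} to get $A^+_T$, apply Theorem~\ref{thm:jtslts} to get the Hom-Lie triple system, and then verify the explicit formula \eqref{starlproduct} by a short computation using anti-symmetry of the Hom-associator. The only cosmetic difference is that the paper rewrites $2[xyz]$ directly in terms of $as_A$ and the commutator, whereas you compute the difference $\frac{1}{2}[[x,y],\alpha(z)] - as_A(x,y,z) - [xyz]$ and show it vanishes; the algebra is the same.
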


\begin{proof}
The multiplicative Hom-Jordan triple system $A^+_T = (A,\{,,\},\alpha^2)$ in Corollary \ref{cor3:hj} yields a multiplicative Hom-Lie triple system $(A,[,,],\alpha^2)$ (Theorem \ref{thm:jtslts}) with
\[
[xyz] = \{xyz\} - \{yxz\}.
\]
We must show that this triple product is equal to the one in \eqref{starlproduct}.  To prove this, switching $x$ and $y$ in \eqref{startriple}, subtracting the result from \eqref{startriple}, and using the anti-symmetry of the Hom-associator, we obtain:
\[
\begin{split}
2[xyz]
&= \alpha(x)(yz) - \alpha(y)(xz) + \alpha(z)(yx) - \alpha(z)(xy)\\
&= -as_A(x,y,z) + (xy)\alpha(z) + as_A(y,x,z) - (yx)\alpha(z)\\
&\relphantom{} + \alpha(z)(yx) - \alpha(z)(xy)\\
&= -2as_A(x,y,z) + [[x,y],\alpha(z)].
\end{split}
\]
The desired equality \eqref{starlproduct} now follows immediately.
\end{proof}

Since Hom-Lie triple systems are also ternary Hom-Nambu algebras, combining Theorem \ref{thm:higher} and Corollary \ref{cor4:hj}, we obtain a sequence of Hom-Nambu algebras of different arities from a multiplicative Hom-alternative algebra.

\begin{corollary}
\label{cor2:higher}
Let $(A,\mu,\alpha)$ be a multiplicative Hom-alternative algebra.  For $k \geq 0$ define the $(2^{k+1}+1)$-ary product $\bracket^{(k)}$ inductively by setting
\[
[x_1,x_2,x_3]^{(0)} =\frac{1}{2}[[x_1,x_2],\alpha(x_3)] - as_A(x_1,x_2,x_3)
\]
as in \eqref{starlproduct} and
\[
[x_{1,2^{k+1}+1}]^{(k)} = [[x_{1,2^k+1}]^{(k-1)},\alpha^{2^k}(x_{2^k+2,2^{k+1}+1})]^{(k-1)}
\]
for $k \geq 1$ and $x_i \in A$.  Then
\[
A_L^{+k} = (A,\bracket^{(k)},\alpha^{2^{k+1}})
\]
is a multiplicative $(2^{k+1}+1)$-ary Hom-Nambu algebra for each $k \geq 0$.
\end{corollary}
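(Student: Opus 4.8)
The plan is to deduce this from Corollary \ref{cor4:hj} and Theorem \ref{thm:higher}, exactly as Corollary \ref{cor1:higher} was deduced from Corollary \ref{cor2:hj} and Theorem \ref{thm:higher}. First I would apply Corollary \ref{cor4:hj} to the multiplicative Hom-alternative algebra $(A,\mu,\alpha)$, producing the multiplicative Hom-Lie triple system $A^+_L = (A,[,,],\alpha^2)$ whose triple product $[,,]$ is given by \eqref{starlproduct}. Since the ternary Hom-Nambu identity $J^3_L = 0$ is equivalent to the defining identity \eqref{homnambu} of a Hom-Lie triple system, $A^+_L$ is in particular a multiplicative ternary Hom-Nambu algebra, with common twisting map $\beta := \alpha^2$.

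Next I would feed $L := A^+_L$ into Theorem \ref{thm:higher}, taking $n = 3$ and the twisting map to be $\beta$. For each $k \geq 0$ this yields a multiplicative $(2^k(n-1)+1) = (2^{k+1}+1)$-ary Hom-Nambu algebra $L^k = (A, \bracket^{(k)}, \beta^{2^k})$, and since $\beta = \alpha^2$ its twisting map is $\beta^{2^k} = \alpha^{2^{k+1}}$, as claimed. Writing out the recursion of Theorem \ref{thm:higher} with $n = 3$ gives
\[
[x_{1,2^{k+1}+1}]^{(k)} = \bigl[[x_{1,2^k+1}]^{(k-1)}, \beta^{2^{k-1}}(x_{2^k+2,2^{k+1}+1})\bigr]^{(k-1)}
\]
for $k \geq 1$; using $\beta^{2^{k-1}} = \alpha^{2^k}$ this is exactly the recursion in the statement, and the base case $\bracket^{(0)} = [,,]$ is the triple product of $A^+_L$ from \eqref{starlproduct}. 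This finishes the argument.

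I do not anticipate a genuine obstacle; the one point that requires care is the bookkeeping of iterated twisting maps. Theorem \ref{thm:higher} tracks powers of the twisting map of the \emph{input} Hom-Nambu algebra, and here that input already carries the twisting map $\alpha^2$ rather than $\alpha$, so one must check that the construction outputs $\alpha^{2^{k+1}}$ (and $\alpha^{2^k}$ inside the recursion) rather than $\alpha^{2^k}$ (and $\alpha^{2^{k-1}}$). Once this is confirmed, the corollary follows formally.
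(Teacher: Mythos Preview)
Your proposal is correct and matches the paper's approach exactly: the paper states this corollary as an immediate consequence of combining Theorem~\ref{thm:higher} with Corollary~\ref{cor4:hj}, with no additional argument given. Your bookkeeping of the twisting maps ($\beta=\alpha^2$, hence $\beta^{2^k}=\alpha^{2^{k+1}}$ and $\beta^{2^{k-1}}=\alpha^{2^k}$) is precisely the check needed to see that the output of Theorem~\ref{thm:higher} applied to $A^+_L$ agrees with the statement.
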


We have $A_L^{+0} = A_L^+ = (A,[,,],\alpha^2)$ as in Corollary \ref{cor4:hj}.  Let us discuss $A_L^{+1}$ and $A_L^{+2}$ in the following example.

\begin{example}
\label{ex2:jhigher}
Let $(A,\mu,\alpha)$ be a multiplicative Hom-alternative algebra.  By Corollary \ref{cor2:higher} there is a multiplicative $5$-ary Hom-Nambu algebra
\[
A_L^{+1} = (A,\bracket^{(1)},\alpha^4).
\]
Recall from \eqref{starlproduct} that
\[
[xyz] = \frac{1}{2}[[x,y],\alpha(z)] - as_A(x,y,z),
\]
where $[,] = \mu - \muop$ is the commutator bracket of $\mu$ and $as_A$ is the Hom-associator \eqref{homassociator}.  Therefore, we can express the $5$-ary product as
\[
\begin{split}
[x_{1,5}]^{(1)}
&= [[x_1,x_2,x_3],\alpha^2(x_4),\alpha^2(x_5)]\\
&= \frac{1}{4}[[[[x_1,x_2],\alpha(x_3)],\alpha^2(x_4)], \alpha^3(x_5)]\\
&\relphantom{} - \frac{1}{2}as_A([[x_1,x_2],\alpha(x_3)], \alpha^2(x_4),\alpha^2(x_5)) - \frac{1}{2}[[as_A(x_1,x_2,x_3), \alpha^2(x_4)], \alpha^3(x_5)]\\
&\relphantom{} + as_A(as_A(x_1,x_2,x_3), \alpha^2(x_4), \alpha^2(x_5)).
\end{split}
\]
Likewise, by Corollary \ref{cor2:higher} there is a multiplicative $9$-ary Hom-Nambu algebra
\[
A_L^{+2} = (A,\bracket^{(2)},\alpha^8)
\]
with
\[
[x_{1,9}]^{(2)} = [[x_{1,5}]^{(1)},\alpha^4(x_{6,9})]^{(1)}.
\]
One can expand $\bracket^{(2)}$ in terms of the commutator bracket, the Hom-associator, and $\alpha$ as above.  There are sixteen terms in the expanded expression.
\qed
\end{example}

\section{$n$-ary Hom-Maltsev algebras}
\label{sec:maltsev}



In this section we introduce $n$-ary Hom-Maltsev algebras, which simultaneously generalize $n$-ary Maltsev algebras \cite{poz} and Hom-Maltsev algebras \cite{yau12}.  It is shown that the category of $n$-ary Hom-Maltsev algebras is closed under twisting by self-morphisms (Theorem \ref{thm1:hommaltsev}).  Using a special case of this result (Corollary \ref{cor2:hommaltsev}), ternary Hom-Maltsev algebras are then constructed from composition algebras (Corollary \ref{cor3:hommaltsev}).  Under some mild conditions, these ternary Hom-Maltsev algebras are not ternary Hom-Nambu-Lie algebras.  As an example, we show that the octonion algebra carries many different ternary Hom-Maltsev algebra structures (Example \ref{ex:octonion}).  The relationship between $n$-ary Hom-Maltsev algebras and $n$-ary Hom-Nambu-Lie algebras will be discussed in the next section.

To motivate the definition below, recall that a \textbf{Maltsev algebra} \cite{maltsev} $(A,\mu)$ has an anti-symmetric binary multiplication $\mu$ that satisfies the \textbf{Maltsev identity}
\[
J'(x,y,xz) = J'(x,y,z)x
\]
for $x,y,z \in A$, where
\[
J'(x,y,z) = (xy)z + (zx)y + (yz)x
\]
is the usual Jacobian.  Using anti-symmetry the Maltsev identity can be rewritten in operator form as
\begin{equation}
\label{maltsevid}
M(x,y) = R_xR_{xy} + R_{xy}R_x - R_x^2R_y + R_yR_x^2 = 0,
\end{equation}
where $R_x$ is the right multiplication operator acting from the right, i.e., $aR_x = ax$.

The following definition is the $n$-ary Hom-version of the operator $M(x,y)$.  We use the abbreviations in \eqref{xij}.

\begin{definition}
\label{def:hommaltsevian}
Let $(A,\bracket,\alpha=(\alpha_1,\ldots,\alpha_{n-1}))$ be an $n$-ary Hom-algebra with $n \geq 2$.  The \textbf{$n$-ary Hom-Maltsevian} is the $(2n-1)$-linear map $M^n_A \colon A^{\otimes 2n-1} \to A$ defined as
\begin{equation}
\label{hommaltsevian}
\begin{split}
M^n_A(z;x_{2,n};y_{2,n})
&= \sum_{i=2}^n \alpha_1\left\{[[z,x_{2,n}],\alpha_{2,i-1}(x_{2,i-1}),[x_i,y_{2,n}], \alpha_{i,n-1}(x_{i+1,n})]\right\}\\
&\relphantom{} + \sum_{i=2}^n \,[[\alpha_1(z),\alpha_{2,i-1}(x_{2,i-1}), [x_i,y_{2,n}],\alpha_{i,n-1}(x_{i+1,n})], \alpha^2_{1,n-1}(x_{2,n})]\\
&\relphantom{} - [[[z,x_{2,n}],\alpha_{1,n-1}(x_{2,n})], \alpha^2_{1,n-1}(y_{2,n})]\\
&\relphantom{} + [[[z,y_{2,n}],\alpha_{1,n-1}(x_{2,n})], \alpha^2_{1,n-1}(x_{2,n})]
\end{split}
\end{equation}
for $z,x_j,y_k \in A$.
\end{definition}

\begin{definition}
\label{def:hommaltsev}
An \textbf{$n$-ary Hom-Maltsev algebra} is an $n$-ary Hom-algebra $A$ whose $n$-ary product is anti-symmetric and that satisfies the \textbf{$n$-ary Hom-Maltsev identity} $M^n_A = 0$.
\end{definition}

An $n$-ary Hom-Maltsev algebra in which all the twisting maps are equal to the identity map is exactly an \textbf{$n$-ary Maltsev algebra} as defined by Pozhidaev \cite{poz}.  In this case, we call $M^n_A$ the \textbf{$n$-ary Maltsevian} and $M^n_A=0$ the \textbf{$n$-ary Maltsev identity}.   If, in addition, $n=2$ then we recover the definition of a Maltsev algebra \cite{maltsev}.  It is shown in \cite{poz} (Lemma 1.1) that every $n$-ary Nambu-Lie algebra (called an $n$-Lie algebra there) is also an $n$-ary Maltsev algebra.  The Hom-version of this fact is proved in the next section.

In the case $n=2$, using anti-symmetry the binary Hom-Maltsev identity $M^2_A = 0$ is equivalent to
\[
\alpha([[x,y],[x,z]])
= [[[x,y],\alpha(z)],\alpha^2(x)]
+ [[[z,x],\alpha(x)],\alpha^2(y)]
+ [[[y,z],\alpha(x)],\alpha^2(x)],
\]
which is equivalent to the Hom-Maltsev identity in \cite{yau12}.  A \textbf{Hom-Maltsev algebra} as defined in \cite{yau12} is exactly a multiplicative binary Hom-Maltsev algebra as in Definition \ref{def:hommaltsev}.

Our first objective is to show that the category of $n$-ary Hom-Maltsev algebras is closed under twisting by self-morphisms.  We need the following observations.

\begin{lemma}
\label{lem1:hommaltsev}
Let $(A,\bracket,\alpha=(\alpha_1,\ldots,\alpha_{n-1}))$ be an $n$-ary Hom-algebra and $\beta \colon A \to A$ be a morphism.  Define $\bracket_\beta = \beta \bracket$, $\beta\alpha = (\beta\alpha_1,\ldots,\beta\alpha_{n-1})$, and the $n$-ary Hom-algebra
\[
A_\beta = (A,\bracket_\beta,\beta\alpha).
\]
Then the following statements hold.
\begin{enumerate}
\item
If $A$ is multiplicative, then so is $A_\beta$.
\item
If $\bracket$ is anti-symmetric, then so is $\bracket_\beta$.
\item
$M^n_{A_\beta} = \beta^3 M^n_A$.
\end{enumerate}
\end{lemma}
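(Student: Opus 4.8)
The plan is to settle (1) and (2) immediately from the definitions and to prove (3) by tracking the copies of $\beta$ that appear when one replaces $\bracket$ by $\bracket_\beta=\beta\bracket$ and each $\alpha_i$ by $\beta\alpha_i$ throughout the defining formula \eqref{hommaltsevian}.

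For (1): if $\alpha_1=\cdots=\alpha_{n-1}=\alpha$ and $\alpha\bracket=\bracket\alpha^{\otimes n}$, then all the twisting maps $\beta\alpha_i$ of $A_\beta$ equal $\beta\alpha$, and since $\beta$ is a morphism --- so $\beta\alpha=\alpha\beta$ and $\beta\bracket=\bracket\beta^{\otimes n}$ --- one computes $(\beta\alpha)\bracket_\beta=\beta\alpha\beta\bracket=\beta^2\bracket\alpha^{\otimes n}=\beta\bracket\beta^{\otimes n}\alpha^{\otimes n}=\bracket_\beta(\beta\alpha)^{\otimes n}$, so $A_\beta$ is multiplicative. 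For (2): since $\bracket_\beta=\beta\bracket$, applying $\beta$ to an anti-symmetry relation $[x_1,\ldots,x_n]=\epsilon(\sigma)[x_{\sigma(1)},\ldots,x_{\sigma(n)}]$ yields the same relation for $\bracket_\beta$.

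For (3) the crucial observation is that each of the four summands of $M^n_A$ in \eqref{hommaltsevian} is a threefold iterated product, in the sense that --- once the external $\alpha_1$ of the first sum is counted on a par with a bracket level --- every occurrence of a variable $z$, $x_j$, or $y_k$ in it sits at ``twisting-depth three'': there are exactly three applications of brackets-or-twisting-maps between that occurrence and the root of the expression. (In the first sum the shape is $\alpha_1$ of a bracket of two brackets; in the other three sums it is a bracket of a bracket of a bracket, with the outermost arguments carrying $\alpha^2_{1,n-1}$, the innermost $z$ carrying $\alpha_1$, and the intermediate arguments carrying the appropriate $\alpha$'s --- in each case three layers in all.) Passing to $A_\beta$ turns every bracket into $\beta\bracket$ and every $\alpha_i$ into $\beta\alpha_i$; using that $\beta$ is a morphism, so $\beta\alpha_i=\alpha_i\beta$ and $\beta[a_1,\ldots,a_n]=[\beta a_1,\ldots,\beta a_n]$, all the resulting factors of $\beta$ can be pushed freely outward through brackets and twisting maps, and each variable occurrence then collects exactly three of them. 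Hence each summand of $M^n_{A_\beta}$ equals $\beta^3$ of the corresponding summand of $M^n_A$ --- equivalently, $M^n_A$ evaluated on the arguments $\beta^3z$, $\beta^3x_j$, $\beta^3y_k$ --- and adding the four summands gives $M^n_{A_\beta}=\beta^3M^n_A$.

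The only real work is organizational: one must carry the index ranges of \eqref{hommaltsevian} through all four terms while checking that the three copies of $\beta$ genuinely collect at the front, and in particular that the discrepancy between the first summand (two bracket levels plus an external $\alpha_1$) and the other three (three bracket levels, no external map) is exactly compensated, so that all four contribute the same power $\beta^3$. This is where I expect the only friction, but it is bookkeeping rather than a genuine obstacle, since the morphism property of $\beta$ lets every copy of $\beta$ migrate past every bracket and every $\alpha_i$. I would carry out the computation explicitly for the first sum and note that the remaining three go through verbatim.
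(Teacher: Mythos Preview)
Your proposal is correct and matches the paper's approach: the paper's proof is the single sentence ``All three statements are immediate from the definitions,'' and what you have written is exactly the unpacking of that sentence. Your depth-three bookkeeping for part (3) is the right way to see why the factor is $\beta^3$, and the morphism identities $\beta\alpha_i=\alpha_i\beta$ and $\beta\bracket=\bracket\,\beta^{\otimes n}$ are precisely what make every copy of $\beta$ migrate to the front.
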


\begin{proof}
All three statements are immediate from the definitions.
\end{proof}

The following result is an immediate consequence of Lemma \ref{lem1:hommaltsev}.

\begin{theorem}
\label{thm1:hommaltsev}
Let $(A,\bracket,\alpha=(\alpha_1,\ldots,\alpha_{n-1}))$ be an $n$-ary Hom-Maltsev algebra and $\beta \colon A \to A$ be a morphism.  Then $A_\beta = (A,\bracket_\beta,\beta\alpha)$, in which
\[
\bracket_\beta = \beta \bracket \quad\text{and}\quad
\beta\alpha = (\beta\alpha_1,\ldots,\beta\alpha_{n-1}),
\]
is also an $n$-ary Hom-Maltsev algebra.  Moreover, if $A$ is multiplicative, then so is $A_\beta$.
\end{theorem}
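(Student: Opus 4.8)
The plan is to obtain the theorem as a direct formal consequence of Lemma \ref{lem1:hommaltsev}, so that the whole argument is an assembly of its three parts together with the hypothesis $M^n_A=0$. First I would note that $A_\beta=(A,\bracket_\beta,\beta\alpha)$ is a well-defined $n$-ary Hom-algebra by construction, and that its product $\bracket_\beta=\beta\bracket$ is anti-symmetric because $\bracket$ is and $\beta$ is linear; this is Lemma \ref{lem1:hommaltsev}(2). Next, since $A$ is an $n$-ary Hom-Maltsev algebra we have $M^n_A=0$, so Lemma \ref{lem1:hommaltsev}(3) gives $M^n_{A_\beta}=\beta^3 M^n_A=0$, which is precisely the $n$-ary Hom-Maltsev identity for $A_\beta$. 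Together with the anti-symmetry of $\bracket_\beta$ this shows that $A_\beta$ is an $n$-ary Hom-Maltsev algebra, and if $A$ is multiplicative then so is $A_\beta$ by Lemma \ref{lem1:hommaltsev}(1). Thus, modulo Lemma \ref{lem1:hommaltsev}, nothing is left to prove.

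Since the theorem is immediate once that lemma is available, the substantive content is in Lemma \ref{lem1:hommaltsev} itself, and the only non-formal point there is the covariance identity $M^n_{A_\beta}=\beta^3 M^n_A$. To establish it I would expand $M^n_{A_\beta}(z;x_{2,n};y_{2,n})$ by substituting $\bracket\mapsto\bracket_\beta=\beta\bracket$ and $\alpha_i\mapsto\beta\alpha_i$ throughout the defining formula \eqref{hommaltsevian}, and then push every newly introduced copy of $\beta$ outward to the output. This is legitimate precisely because $\beta$ is a \emph{morphism} of $n$-ary Hom-algebras, so $\beta\alpha_i=\alpha_i\beta$ for each $i$ and $\beta\circ\bracket=\bracket\circ\beta^{\otimes n}$: a factor of $\beta$ may always be slid past any twisting map and slid out through any application of the bracket. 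The structural point that makes the count work is that each summand on the right-hand side of \eqref{hommaltsevian} is homogeneous of degree three in the following sense: along every path from an input leaf ($z$, $x_j$, or $y_j$) to the output, the total number of applications of $\bracket$ and of the twisting maps $\alpha_i$ encountered is exactly three. For instance, in the term $-[[[z,x_{2,n}],\alpha_{1,n-1}(x_{2,n})],\alpha^2_{1,n-1}(y_{2,n})]$ the leaf $z$ passes through three nested brackets, a leaf $x_{j+1}$ through one $\alpha_j$ and two brackets, and a leaf $y_{j+1}$ through two copies of $\alpha_j$ and one bracket; in the first sum the outer $\alpha_1$ together with the two nested brackets again gives total degree three; and similarly for the remaining two summands. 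Consequently the substitution inserts exactly one factor of $\beta$ per unit of this degree, and after collecting them at the output each summand is multiplied by a single $\beta^3$; summing over the four summands yields $M^n_{A_\beta}=\beta^3 M^n_A$.

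The main obstacle, such as it is, is the term-by-term bookkeeping in this last step: one must check that the plain twists $\alpha_i$, the squared twists $\alpha_i^2$, and the nested brackets occurring in \eqref{hommaltsevian} all contribute to the same total degree three, so that no summand ends up with a stray extra or missing power of $\beta$. Once that uniformity is verified, Lemma \ref{lem1:hommaltsev}(3), and hence the theorem, follows. The multiplicativity clause of the lemma needs only the elementary remark that, for a morphism $\beta$ and multiplicative $A$ (so $\alpha_1=\cdots=\alpha_{n-1}=\alpha$ with $\alpha\bracket=\bracket\alpha^{\otimes n}$), one has $\beta\alpha_1=\cdots=\beta\alpha_{n-1}=\beta\alpha$ and $(\beta\alpha)\bracket_\beta=\beta^2\alpha\bracket=\beta^2\bracket\alpha^{\otimes n}=\bracket_\beta(\beta\alpha)^{\otimes n}$, using $\alpha\beta=\beta\alpha$ and $\beta\bracket=\bracket\beta^{\otimes n}$.
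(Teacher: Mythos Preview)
Your proposal is correct and follows exactly the paper's approach: the paper states that the theorem is an immediate consequence of Lemma~\ref{lem1:hommaltsev}, whose proof in turn is declared ``immediate from the definitions.'' Your write-up simply makes explicit the degree-three bookkeeping behind the covariance identity $M^n_{A_\beta}=\beta^3 M^n_A$ that the paper leaves to the reader.
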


In other words, Theorem \ref{thm1:hommaltsev} says that the category of $n$-ary Hom-Maltsev algebras is closed under twisting by self-morphisms.  The corresponding closure property for $n$-ary Hom-Nambu(-Lie) algebras can be found in \cite{yau13}.  Other Hom-type algebras can also be shown to have the same kind of closure property.   We emphasize that this closure property is unique to Hom-type algebras, since ordinary algebras are usually not closed under twisting by morphisms.  In fact, under twisting by self-morphisms, an ordinary algebra should give rise to a Hom-type algebra, as in Corollary \ref{cor2:hommaltsev} below.  Such a twisting result for Hom-type algebras was first proved by the author in \cite{yau2}.

Let us discuss some special cases of Theorem \ref{thm1:hommaltsev}.

\begin{corollary}
\label{cor1:hommaltsev}
Let $(A,\bracket,\alpha)$ be a multiplicative $n$-ary Hom-Maltsev algebra.  Then
\[
A_\alpha = (A,\bracket_\alpha=\alpha\bracket,\alpha^2)
\]
is also a multiplicative $n$-ary Hom-Maltsev algebra.
\end{corollary}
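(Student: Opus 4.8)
The plan is to derive Corollary \ref{cor1:hommaltsev} as a direct specialization of Theorem \ref{thm1:hommaltsev}. Since $(A,\bracket,\alpha)$ is multiplicative, all its twisting maps are equal to a single map $\alpha$, and multiplicativity gives precisely $\alpha \circ \bracket = \bracket \circ \alpha^{\otimes n}$, which is exactly the statement that $\alpha \colon A \to A$ is a (weak) endomorphism of the $n$-ary Hom-algebra. Moreover, $\alpha$ commutes with itself, so $\alpha \circ \alpha_i = \alpha_i \circ \alpha$ holds trivially for each $i$; hence $\alpha$ is a morphism of the $n$-ary Hom-algebra $A$ in the sense of Definition \ref{def:nhomalgebra}(3). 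This is the observation that unlocks everything: $\beta = \alpha$ is a legitimate choice in Theorem \ref{thm1:hommaltsev}.

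With $\beta = \alpha$, Theorem \ref{thm1:hommaltsev} immediately yields that $A_\alpha = (A, \bracket_\alpha, \alpha\alpha)$ is an $n$-ary Hom-Maltsev algebra, where $\bracket_\alpha = \alpha \bracket$ and $\alpha\alpha = (\alpha\alpha_1, \ldots, \alpha\alpha_{n-1}) = (\alpha^2, \ldots, \alpha^2)$. The common twisting map is therefore $\alpha^2$, matching the statement. The final clause of Theorem \ref{thm1:hommaltsev} says that if $A$ is multiplicative then so is $A_\beta$; applying this with $\beta = \alpha$ gives that $A_\alpha$ is multiplicative. So the conclusion is exactly what is claimed.

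I do not anticipate any genuine obstacle here — this is a one-line deduction. The only point requiring a moment's care is the verification that for a multiplicative $n$-ary Hom-algebra the common twisting map $\alpha$ genuinely satisfies the definition of a morphism; but this is immediate since (i) multiplicativity is the weak-morphism condition $\alpha \circ \bracket = \bracket \circ \alpha^{\otimes n}$, and (ii) the compatibility $\alpha \circ \alpha_i = \alpha_i \circ \alpha$ is trivial when all $\alpha_i$ equal $\alpha$. Thus the proof consists of invoking Theorem \ref{thm1:hommaltsev} with $\beta = \alpha$ and reading off the twisting map as $\alpha^2$, together with the multiplicativity clause of that theorem.

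\begin{proof}
Since $(A,\bracket,\alpha)$ is multiplicative, we have $\alpha \circ \bracket = \bracket \circ \alpha^{\otimes n}$, so $\alpha$ is a weak morphism, and $\alpha$ trivially commutes with each twisting map (which is $\alpha$ itself). Hence $\alpha \colon A \to A$ is a morphism of the $n$-ary Hom-algebra $A$. Applying Theorem \ref{thm1:hommaltsev} with $\beta = \alpha$ shows that $A_\alpha = (A,\alpha\bracket,\alpha\alpha)$ is an $n$-ary Hom-Maltsev algebra, and the last part of that theorem shows it is multiplicative. Since $\alpha\alpha = (\alpha^2,\ldots,\alpha^2)$, the common twisting map of $A_\alpha$ is $\alpha^2$, as claimed.
\end{proof}
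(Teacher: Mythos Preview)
Your proof is correct and matches the paper's own argument: the paper's proof is simply ``This is the special case of Theorem \ref{thm1:hommaltsev} with $\beta = \alpha$.'' Your additional explicit verification that $\alpha$ is a morphism of $A$ (via multiplicativity and trivial commutation with itself) is the routine check the paper leaves implicit.
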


\begin{proof}
This is the special case of Theorem \ref{thm1:hommaltsev} with $\beta = \alpha$.
\end{proof}

Using Corollary \ref{cor1:hommaltsev} repeatedly, we obtain the following result.

\begin{corollary}
\label{cor1.5:hommaltsev}
Let $(A,\bracket,\alpha)$ be a multiplicative $n$-ary Hom-Maltsev algebra.  Then
\[
A_k = (A,\bracket_k = \alpha^{2^k-1}\bracket,\alpha^{2^k})
\]
is also a multiplicative $n$-ary Hom-Maltsev algebra for each $k \geq 0$.
\end{corollary}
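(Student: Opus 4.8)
The plan is to argue by induction on $k$, using Corollary \ref{cor1:hommaltsev} as the engine of the inductive step. The base case $k = 0$ is immediate: $A_0 = (A, \alpha^{2^0 - 1}\bracket, \alpha^{2^0}) = (A, \bracket, \alpha)$ is precisely the given multiplicative $n$-ary Hom-Maltsev algebra, so there is nothing to check.

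For the inductive step, I would assume that $A_k = (A, \bracket_k, \alpha^{2^k})$ with $\bracket_k = \alpha^{2^k - 1}\bracket$ is a multiplicative $n$-ary Hom-Maltsev algebra and apply Corollary \ref{cor1:hommaltsev} to it, with the structure map $\alpha^{2^k}$ of $A_k$ playing the role of $\alpha$ in that statement. The conclusion of Corollary \ref{cor1:hommaltsev} then produces the multiplicative $n$-ary Hom-Maltsev algebra
\[
(A_k)_{\alpha^{2^k}} = \left(A, \alpha^{2^k}\bracket_k, (\alpha^{2^k})^2\right).
\]
It then remains only to recognize this as $A_{k+1}$, which follows from the two exponent identities $2^k + (2^k - 1) = 2^{k+1} - 1$ (so that $\alpha^{2^k}\bracket_k = \alpha^{2^{k+1}-1}\bracket = \bracket_{k+1}$) and $2 \cdot 2^k = 2^{k+1}$ (so that $(\alpha^{2^k})^2 = \alpha^{2^{k+1}}$). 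This closes the induction.

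There is no genuine obstacle in this argument; the only point worth stating explicitly is that Corollary \ref{cor1:hommaltsev} is applicable to $A_k$ at each stage. This holds because $A_k$ is multiplicative by the inductive hypothesis, and the common twisting map of any multiplicative $n$-ary Hom-algebra is automatically a self-morphism, so Corollary \ref{cor1:hommaltsev} (equivalently, Theorem \ref{thm1:hommaltsev} with $\beta = \alpha^{2^k}$) applies as stated. Multiplicativity of $A_{k+1}$ is likewise part of the output of Corollary \ref{cor1:hommaltsev}, so nothing further is needed.
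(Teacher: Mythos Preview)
Your proof is correct and follows exactly the approach the paper intends: the paper simply states that the result is obtained by using Corollary~\ref{cor1:hommaltsev} repeatedly, which is precisely your induction. The exponent bookkeeping you give is accurate and nothing further is needed.
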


\begin{corollary}
\label{cor2:hommaltsev}
Let $(A,\bracket)$ be an $n$-ary Maltsev algebra and $\beta \colon A \to A$ be a morphism.  Then
\[
A_\beta = (A,\bracket_\beta = \beta\bracket,\beta)
\]
is a multiplicative $n$-ary Hom-Maltsev algebra.
\end{corollary}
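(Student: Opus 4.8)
The plan is to derive Corollary \ref{cor2:hommaltsev} as the specialization of Theorem \ref{thm1:hommaltsev} to the case where the starting object is an ordinary $n$-ary Maltsev algebra, viewed as a Hom-Maltsev algebra with all twisting maps equal to the identity. Recall from the conventions in Section \ref{sec:prelim} that an $n$-ary algebra $(A,\bracket)$ is regarded as the $n$-ary Hom-algebra $(A,\bracket,Id)$; since all twisting maps equal $Id$, this Hom-algebra is automatically multiplicative. Moreover, an $n$-ary Maltsev algebra is by definition (the paragraph after Definition \ref{def:hommaltsev}) precisely an $n$-ary Hom-Maltsev algebra with $\alpha=(Id,\ldots,Id)$, so $(A,\bracket,Id)$ is a multiplicative $n$-ary Hom-Maltsev algebra.

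First I would observe that a morphism $\beta\colon A\to A$ of the $n$-ary algebra $(A,\bracket)$ is the same thing as a morphism of the Hom-algebra $(A,\bracket,Id)$ in the sense of Definition \ref{def:nhomalgebra}(3): the condition $\beta\circ(\alpha_i)_A=(\alpha_i)_A\circ\beta$ reduces to $\beta\circ Id=Id\circ\beta$, which is vacuous, so it is exactly a weak morphism, i.e.\ $\beta\circ\bracket=\bracket\circ\beta^{\otimes n}$. Then I would apply Theorem \ref{thm1:hommaltsev} to $(A,\bracket,Id)$ with this morphism $\beta$. The theorem produces the $n$-ary Hom-Maltsev algebra
\[
A_\beta = (A,\bracket_\beta,\beta\cdot Id) = (A,\beta\bracket,\beta),
\]
since $\bracket_\beta=\beta\bracket$ and $\beta\alpha=(\beta Id,\ldots,\beta Id)=(\beta,\ldots,\beta)$, i.e.\ the common twisting map is $\beta$. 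The "moreover" clause of Theorem \ref{thm1:hommaltsev} guarantees that $A_\beta$ is multiplicative, because $(A,\bracket,Id)$ is; concretely, multiplicativity of $A_\beta$ is the identity $\beta\circ\bracket_\beta=\bracket_\beta\circ\beta^{\otimes n}$, which follows from $\beta$ being a morphism of $(A,\bracket)$.

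There is essentially no obstacle here: the corollary is a pure specialization, and every ingredient — the identification of $n$-ary Maltsev algebras with identity-twisted Hom-Maltsev algebras, the automatic multiplicativity of identity-twisted Hom-algebras, and the reduction of the morphism condition to a vacuous one — has already been set up in the conventions and in Definition \ref{def:hommaltsev}. If anything merits a sentence of care, it is simply confirming that the output's twisting data $\beta\alpha$ unwinds to the single map $\beta$ rather than a tuple of distinct maps, so that $A_\beta=(A,\beta\bracket,\beta)$ is written with $\beta$ as the common twisting map in the sense of the notation introduced after Definition \ref{def:nhomalgebra}. Thus the proof is a two-line invocation of Theorem \ref{thm1:hommaltsev}.
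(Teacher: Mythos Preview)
Your proposal is correct and follows exactly the paper's approach: the paper's proof is the single sentence ``This is the special case of Theorem \ref{thm1:hommaltsev} with $\alpha_i = Id$ for all $i$.'' Your additional checks (that a morphism of $(A,\bracket)$ is automatically a morphism of $(A,\bracket,Id)$, that the latter is multiplicative, and that $\beta\alpha$ collapses to the single twisting map $\beta$) are accurate elaborations of what the paper leaves implicit.
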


\begin{proof}
This is the special case of Theorem \ref{thm1:hommaltsev} with $\alpha_i = Id$ for all $i$.
\end{proof}

Using Corollary \ref{cor2:hommaltsev} one can construct $n$-ary Hom-Maltsev algebras from $n$-ary Maltsev algebras.  We now discuss how ternary Hom-Maltsev algebras arise from composition algebras.

Let us first recall the definition of a composition algebra, which comes up in connection with the Cayley-Dickson double construction.  For example, the octonion algebra can be constructed as a double of the quaternion algebra, which in turn can be constructed as a double of the complex numbers.  The reader may consult, e.g., \cite{rowen} (Appendix 21B) for discussion of composition algebras.

\begin{definition}
Let $(A,\mu)$ be a binary unital algebra with multiplicative identity $1$.
\begin{enumerate}
\item
An \textbf{involution} $\ast \colon A \to A$ on $A$ is an anti-automorphism of order $2$, i.e., $\ast \not= Id$ is a linear automorphism such that $(x^*)^* = x$ and $(xy)^* = y^*x^*$ for all $x,y \in A$.
\item
We say that $A$ is a \textbf{composition algebra with involution $\ast$} if $\ast$ is an involution on $A$ such that 
\begin{enumerate}
\item
$xx^* = x^*x \in \bk 1$ for all $x \in A$, and
\item
the symmetric bilinear form on $A$
\begin{equation}
\label{bform}
\langle x,y\rangle = \frac{1}{2}(xy^* + yx^*) \in \bk 1
\end{equation}
is non-degenerate.
\end{enumerate}
\end{enumerate}
\end{definition}

If $Q(x)$ denotes the quadratic form $Q(x) = xx^* \in \bk 1$, then the symmetric bilinear form \eqref{bform} becomes
\[
\langle x,y\rangle = \frac{1}{2}(Q(x+y) - Q(x) - Q(y)).
\]
The quadratic form $Q$ is usually called the \emph{norm} on $A$.  The following consequence of Corollary \ref{cor2:hommaltsev} tells us how to construct ternary Hom-Maltsev algebras from composition algebras. It extends some results due to Pozhidaev \cite{poz}.

\begin{corollary}
\label{cor3:hommaltsev}
Let $(A,\ast)$ be a composition algebra with involution $\ast$ and $\alpha \colon A \to A$ be an algebra morphism that commutes with $\ast$ and preserves the multiplicative identity of $A$.  Then
\[
M(A)_\alpha = (A,[,,]_\alpha ,\alpha)
\]
is a multiplicative ternary Hom-Maltsev algebra, where
\[
[xyz]_\alpha = \alpha((xy^*)z) - \langle y,z\rangle \alpha(x) + \langle x,z\rangle\alpha(y) - \langle x,y\rangle \alpha(z).
\]
Moreover, if $\dim(A) \geq 5$ and $\alpha$ is injective, then $M(A)_\alpha$ is not a ternary Hom-Nambu-Lie algebra.
\end{corollary}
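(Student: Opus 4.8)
The plan is to invoke Corollary~\ref{cor2:hommaltsev} to reduce everything to showing that the underlying ternary algebra $M(A) = (A,[,,])$, with
\[
[xyz] = (xy^*)z - \langle y,z\rangle x + \langle x,z\rangle y - \langle x,y\rangle z,
\]
is a ternary Maltsev algebra and that $\alpha$ is a morphism of this ternary algebra. The first of these is precisely the content of Pozhidaev's work on composition algebras \cite{poz}, so I would cite that: $M(A)$ is a ternary Maltsev algebra because the product $[xyz]$ has the required anti-symmetry (which follows from $\langle -,-\rangle$ being symmetric and from the identity $(xy^*)z + (zy^*)x = 2\langle x,z\rangle y$ valid in a composition algebra, an incarnation of the Moufang-type identities) and satisfies the ternary Maltsev identity. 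For the morphism property I would check that $\alpha[xyz] = [\alpha(x)\alpha(y)\alpha(z)]$: since $\alpha$ is an algebra morphism preserving $1$ and commuting with $\ast$, one has $\alpha(xy^*) = \alpha(x)\alpha(y)^*$, hence $\alpha((xy^*)z) = (\alpha(x)\alpha(y)^*)\alpha(z)$, and one checks $\langle \alpha(x),\alpha(y)\rangle 1 = \alpha(\langle x,y\rangle 1) = \langle x,y\rangle 1$ using $\alpha(1)=1$ and the formula $2\langle x,y\rangle 1 = xy^* + yx^*$. Applying Corollary~\ref{cor2:hommaltsev} with $\beta = \alpha$ then yields that $M(A)_\alpha = (A,\alpha\circ[,,],\alpha)$ is a multiplicative ternary Hom-Maltsev algebra, and $\alpha\circ[xyz] = [xyz]_\alpha$ by the explicit formula for $[xyz]_\alpha$ in the statement.

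For the second assertion, I would argue by contradiction: suppose $M(A)_\alpha$ is a ternary Hom-Nambu-Lie algebra, so its product $[,,]_\alpha$ is anti-symmetric in all three arguments. Since $[xyz]_\alpha = \alpha([xyz])$ and $\alpha$ is injective, this forces the underlying product $[xyz]$ to be fully anti-symmetric; in particular $[xyz] = -[yxz]$. I would then exploit this extra anti-symmetry to derive a dimension bound. Computing $[xyz] + [yxz]$ from the formula and using $xy^* + yx^* = 2\langle x,y\rangle 1$, the $\langle x,z\rangle y$ and $\langle y,z\rangle x$ terms and the $xy^*$ terms recombine, and full anti-symmetry collapses to a relation of the form $\langle x,y\rangle z = (\text{terms in } \langle x,z\rangle, \langle y,z\rangle)$ holding identically; picking $x,y$ with $\langle x,y\rangle \ne 0$ and $z$ orthogonal to both $x$ and $y$ then forces $z$ to lie in the span of $x,y$, so $\dim(A) \le $ a small constant. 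Tracking the bookkeeping carefully should show $\dim(A) \le 4$, contradicting $\dim(A)\ge 5$; this is essentially Pozhidaev's non-$n$-Lie argument \cite{poz} transported along the injective morphism $\alpha$.

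The main obstacle is the second part: getting the dimension bound cleanly requires identifying exactly which vanishing of symmetrized brackets the hypothetical anti-symmetry imposes, and then choosing test elements $x,y,z$ in the composition algebra — using non-degeneracy of $\langle-,-\rangle$ to find a vector $z$ orthogonal to a chosen pair $x,y$ with $\langle x,y\rangle\ne 0$ — to extract the contradiction. The reduction via Corollary~\ref{cor2:hommaltsev} and the morphism check are routine, and the injectivity of $\alpha$ is used only to pull the defect back from $M(A)_\alpha$ to $M(A)$, so the real work is confined to the composition-algebra identities underlying Pozhidaev's calculation.
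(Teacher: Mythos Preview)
Your argument for the first assertion is correct and essentially identical to the paper's: cite Pozhidaev for $M(A)$ being ternary Maltsev, verify that $\alpha$ preserves the form (hence the bracket), and apply Corollary~\ref{cor2:hommaltsev}.

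For the second assertion there is a genuine gap. You try to derive a contradiction from the assumption that $[,,]_\alpha$ is anti-symmetric in all three arguments, but this is vacuous: an $n$-ary Hom-Maltsev algebra \emph{already} has a fully anti-symmetric product by Definition~\ref{def:hommaltsev}, and you yourself noted in the first part that $[xyz]$ is anti-symmetric (via the composition-algebra identity $(xy^*)z + (zy^*)x = 2\langle x,z\rangle y$). So the computation of $[xyz]+[yxz]$ simply gives $0$, and no dimension bound can be extracted this way.

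What distinguishes Hom-Nambu-Lie from Hom-Maltsev is not anti-symmetry but the \emph{Hom-Nambu identity} $J^3=0$. The paper's proof works with this directly: since $[,,]_\alpha = \alpha\circ[,,]$ and the twisting map of $M(A)_\alpha$ is $\alpha$, one has
\[
J^3_{M(A)_\alpha} = \alpha^2 \, J^3_{M(A)}.
\]
Pozhidaev's Theorem~3.1 in \cite{poz} gives $J^3_{M(A)} \neq 0$ whenever $\dim(A)\ge 5$ (this is the result you should be citing for the dimension bound, not an anti-symmetry argument), and injectivity of $\alpha$ then forces $J^3_{M(A)_\alpha}\neq 0$. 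That is how $\alpha$ being injective is actually used: to transport the nonvanishing of the \emph{Jacobian}, not of some symmetrized bracket.
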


\begin{proof}
Pozhidaev proved in \cite{poz} (Theorem 2.3) that $M(A) = (A,[,,])$ is a ternary Maltsev algebra, where
\[
[xyz] = (xy^*)z - \langle y,z\rangle x + \langle x,z\rangle y - \langle x,y\rangle z.
\]
Moreover, Pozhidaev proved in \cite{poz} (Theorem 3.1) that, if $\dim(A) \geq 5$, then $M(A)$ is not a ternary Nambu-Lie algebra, i.e., the ternary Jacobian $J^3_{M(A)} \not= 0$.  The assumptions on $\alpha$ imply that
\[
Q(\alpha(x)) = xx^* = Q(x)
\]
for all $x \in A$, which in turn implies that
\[
\langle \alpha(x),\alpha(y)\rangle = \langle x,y\rangle
\]
for all $x,y \in A$.  It follows that $\alpha \colon M(A) \to M(A)$ is a morphism of ternary Maltsev algebras.  The first assertion now follows from the $n=3$ case of Corollary \ref{cor2:hommaltsev}.

For the second assertion, observe that the ternary Hom-Jacobian of $M(A)_\alpha = (A,[,,]_\alpha=\alpha[,,],\alpha)$ and the ternary Jacobian of $M(A)$ are related as
\[
J^3_{M(A)_\alpha} = \alpha^2 J^3_{M(A)}.
\]
If $\dim(A) \geq 5$, then $J^3_{M(A)} \not= 0$, which implies that $J^3_{M(A)_\alpha} \not= 0$ if $\alpha$ is injective.  So in this case $M(A)_\alpha$ does not satisfy the ternary Hom-Nambu identity and hence is not a ternary Hom-Nambu-Lie algebra.
\end{proof}

In the following example, using Corollary \ref{cor3:hommaltsev} we show that the octonion algebra carries many different ternary Hom-Maltsev algebra structures that are not ternary Hom-Nambu-Lie algebras.

\begin{example}
\label{ex:octonion}
The octonion algebra $\oct$ \cite{baez,rowen,schafer} is the eight-dimensional alternative (but not associative) algebra with basis $\{e_0,\ldots,e_7\}$ and the following multiplication table:
\begin{center}
\begin{tabular}{c|c|c|c|c|c|c|c|c}
$\mu$ & $e_0$ & $e_1$ & $e_2$ & $e_3$ & $e_4$ & $e_5$ & $e_6$ & $e_7$ \\\hline
$e_0$ & $e_0$ & $e_1$ & $e_2$ & $e_3$ & $e_4$ & $e_5$ & $e_6$ & $e_7$ \\\hline
$e_1$ & $e_1$ & $-e_0$ & $e_4$ & $e_7$ & $-e_2$ & $e_6$ & $-e_5$ & $-e_3$ \\\hline
$e_2$ & $e_2$ & $-e_4$ & $-e_0$ & $e_5$ & $e_1$ & $-e_3$ & $e_7$ & $-e_6$ \\\hline
$e_3$ & $e_3$ & $-e_7$ & $-e_5$ & $-e_0$ & $e_6$ & $e_2$ & $-e_4$ & $e_1$ \\\hline
$e_4$ & $e_4$ & $e_2$ & $-e_1$ & $-e_6$ & $-e_0$ & $e_7$ & $e_3$ & $-e_5$ \\\hline
$e_5$ & $e_5$ & $-e_6$ & $e_3$ & $-e_2$ & $-e_7$ & $-e_0$ & $e_1$ & $e_4$ \\\hline
$e_6$ & $e_6$ & $e_5$ & $-e_7$ & $e_4$ & $-e_3$ & $-e_1$ & $-e_0$ & $e_2$ \\\hline
$e_7$ & $e_7$ & $e_3$ & $e_6$ & $-e_1$ & $e_5$ & $-e_4$ & $-e_2$ & $-e_0$ \\
\end{tabular}
\end{center}
For an octonion $x = \sum_{i=0}^7 b_ie_i$ with each $b_i \in \bk$, its \textbf{conjugate} is defined as the octonion
\[
\xbar = b_0e_0 - \sum_{i=1}^7 b_ie_i.
\]
The octonion algebra $\oct$ is a composition algebra with involution given by conjugation \cite{rowen} (Remark 21B.16).

To use Corollary \ref{cor3:hommaltsev} on $\oct$, consider, for example, the algebra automorphism $\alpha \colon \oct \to \oct$ given by
\begin{equation}
\label{octaut}
\begin{split}
\alpha(e_0) = e_0,\quad \alpha(e_1) = e_5,\quad \alpha(e_2) = e_6,\quad \alpha(e_3) = e_7,\\
\alpha(e_4) = e_1,\quad \alpha(e_5) = e_2,\quad \alpha(e_6) = e_3,\quad \alpha(e_7) = e_4.
\end{split}
\end{equation}
This map $\alpha$ preserves the multiplicative identity $e_0$ and commutes with conjugation.  By Corollary \ref{cor3:hommaltsev} there is a multiplicative ternary Hom-Maltsev algebra
\[
M(\oct)_\alpha = (\oct,[,,]_\alpha,\alpha)
\]
that is not a ternary Hom-Nambu-Lie algebra.  Moreover, $(\oct,[,,]_\alpha)$ is not a ternary Maltsev algebra.  Indeed, one can check that the ternary Maltsevian of $(\oct,[,,]_\alpha)$ satisfies
\[
M^3(e_1;e_2,e_4;e_6,e_7) = -e_2 - e_5 \not= 0.
\]
Therefore, $(\oct,[,,]_\alpha)$ does not satisfy the ternary Maltsev identity.

There is a more conceptual description of the algebra automorphism $\alpha$ in \eqref{octaut}.  Note that $e_1$ and $e_2$ anti-commute, and $e_3$ anti-commutes with $e_1$, $e_2$, and $e_1e_2 = e_4$. Such a triple $(e_1,e_2,e_3)$ is called a \textbf{basic triple} in \cite{baez}.  Another basic triple is $(e_5,e_6,e_7)$.  Then $\alpha$ in \eqref{octaut} is the unique automorphism on $\oct$ that sends the basic triple $(e_1,e_2,e_3)$ to the basic triple $(e_5,e_6,e_7)$.

If we take any two basic triples involving only the basis elements $\{e_1,\ldots,e_7\}$, then there is a unique automorphism $\beta$ on $\oct$ that takes one to the other.  Such an automorphism must preserve the multiplicative identity and commute with conjugation.  Corollary \ref{cor3:hommaltsev} can then be applied to yield a multiplicative ternary Hom-Maltsev algebra $M(\oct)_\beta$ that is not a ternary Hom-Nambu-Lie algebra.
\qed
\end{example}

\section{$n$-ary Hom-Nambu-Lie and Hom-Maltsev algebras}
\label{sec:homnambu}

In this section, we establish two properties of $n$-ary Hom-Maltsev algebras.  First, we show that every multiplicative $n$-ary Hom-Nambu-Lie algebra is also a multiplicative $n$-ary Hom-Maltsev algebra (Theorem \ref{thm:liemaltsev}).  This is a Hom-type generalization of a result of Pozhidaev \cite{poz} (Lemma 1.1).  Second, we show that under some conditions an $n$-ary Hom-Maltsev algebra reduces to an $(n-1)$-ary Hom-Maltsev algebra (Theorem \ref{thm:reduction}).  This is also a Hom-type generalization of a result of Pozhidaev \cite{poz} (Lemma 1.2).  The Hom-Nambu(-Lie) analogue of this reduction result can be found in \cite{yau13}.

To show that multiplicative $n$-ary Hom-Nambu-Lie algebras are $n$-ary Hom-Maltsev algebras, we need the following observation about the relationship between the $n$-ary Hom-Jacobian \eqref{homjacobian} and the $n$-ary Hom-Maltsevian \eqref{hommaltsevian}.  Recall the abbreviations in \eqref{xij}.

\begin{lemma}
\label{lem:jm}
Let $(A,\bracket,\alpha)$ be a multiplicative $n$-ary Hom-algebra with $\bracket$ anti-symmetric. Then we have
\[
\begin{split}
M^n_A(z;x_{2,n};y_{2,n})
&= (-1)^nJ^n_A(\alpha(y_{2,n});[z,x_{2,n}],\alpha(x_{2,n}))\\
&\relphantom{} + (-1)^n[J^n_A(y_{2,n};z,x_{2,n}),\alpha^2(x_{2,n})]
\end{split}
\]
for all $z,x_2,\ldots,x_n,y_2,\ldots,y_n \in A$.
\end{lemma}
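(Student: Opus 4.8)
The plan is to expand the two $n$-ary Hom-Jacobians appearing on the right-hand side directly from the definition \eqref{homjacobian}, to simplify the inner and outer brackets using multiplicativity and the anti-symmetry of $\bracket$, and then to match the resulting terms, group by group, against the four groups of terms in the $n$-ary Hom-Maltsevian \eqref{hommaltsevian}. The only manipulations used will be that sliding an anti-symmetric $n$-ary bracket past the $n-1$ entries $x_{2,n}$ (or $y_{2,n}$) costs a sign $(-1)^{n-1}$, together with the multiplicativity relation $\alpha \circ \bracket = \bracket \circ \alpha^{\otimes n}$.

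I would treat the two Hom-Jacobians separately. Take first the term $(-1)^n[J^n_A(y_{2,n};z,x_{2,n}),\alpha^2(x_{2,n})]$. In $J^n_A(y_{2,n};z,x_{2,n})$ the first $n-1$ slots carry $y_{2,n}$ and the remaining $n$ slots carry $(z,x_2,\ldots,x_n)$, so every inner bracket in the defining sum has the form $[y_{2,n},w] = (-1)^{n-1}[w,y_{2,n}]$ with $w \in \{z,x_2,\ldots,x_n\}$, and the ``outer'' piece is $[\alpha(y_{2,n}),[z,x_{2,n}]] = (-1)^{n-1}[[z,x_{2,n}],\alpha(y_{2,n})]$; only anti-symmetry is needed here. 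After bracketing against $\alpha^2(x_{2,n})$ and multiplying by $(-1)^n$, the $i=1$ summand becomes the fourth term $[[[z,y_{2,n}],\alpha(x_{2,n})],\alpha^2(x_{2,n})]$ of \eqref{hommaltsevian}, the summands $i \ge 2$ become precisely the second group $\sum_{i=2}^n [[\alpha(z),\alpha(x_{2,i-1}),[x_i,y_{2,n}],\alpha(x_{i+1,n})],\alpha^2(x_{2,n})]$, and the outer piece leaves behind one extra term $-[[[z,x_{2,n}],\alpha(y_{2,n})],\alpha^2(x_{2,n})]$.

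Next take $(-1)^n J^n_A(\alpha(y_{2,n});[z,x_{2,n}],\alpha(x_{2,n}))$, whose first $n-1$ slots carry $\alpha(y_{2,n})$ and whose last $n$ slots carry $([z,x_{2,n}],\alpha(x_2),\ldots,\alpha(x_n))$. The outer piece $[\alpha^2(y_{2,n}),[[z,x_{2,n}],\alpha(x_{2,n})]]$ becomes, after one slide, $(-1)^{n-1}[[[z,x_{2,n}],\alpha(x_{2,n})],\alpha^2(y_{2,n})]$, which together with $(-1)^n$ is exactly the third term of \eqref{hommaltsevian}; the $i=1$ summand yields $+[[[z,x_{2,n}],\alpha(y_{2,n})],\alpha^2(x_{2,n})]$, which cancels the extra term from the previous paragraph. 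The summands $i \ge 2$ are the crux, and here multiplicativity is essential: using $[\alpha(z),\alpha(x_{2,n})] = \alpha([z,x_{2,n}])$, $[\alpha(y_{2,n}),\alpha(x_i)] = \alpha([y_{2,n},x_i]) = (-1)^{n-1}\alpha([x_i,y_{2,n}])$, and $\alpha^2(x_j) = \alpha(\alpha(x_j))$, one pulls a single common $\alpha$ outside the whole $n$-ary bracket, so that the $i \ge 2$ contribution collapses to $\sum_{i=2}^n \alpha([[z,x_{2,n}],\alpha(x_{2,i-1}),[x_i,y_{2,n}],\alpha(x_{i+1,n})])$, which is exactly the first, $\alpha\{\cdots\}$, group of \eqref{hommaltsevian}. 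Adding the two expansions, the two extra terms cancel and each of the four groups of \eqref{hommaltsevian} is produced once, which is the asserted identity.

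I expect the principal obstacle to be the sign bookkeeping: each transposition of an $n$-ary bracket past $x_{2,n}$ or $y_{2,n}$ contributes $(-1)^{n-1}$, and these have to combine correctly with the two overall $(-1)^n$ factors precisely so that the two extra terms cancel --- a single miscounted slide destroys the argument. The second point requiring care is the use of $\alpha \circ \bracket = \bracket \circ \alpha^{\otimes n}$ to extract a common $\alpha$ from a deeply nested bracket in the $i \ge 2$ summands of the second Hom-Jacobian; one must check that each of the $n$ arguments of that bracket --- the leading $[z,x_{2,n}]$, the entries $\alpha^2(x_j)$, and the middle entry $\alpha([x_i,y_{2,n}])$ --- genuinely carries an $\alpha$ factor before multiplicativity is invoked.
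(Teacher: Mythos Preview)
Your proposal is correct and follows essentially the same route as the paper: the paper first records the anti-symmetrized form $(-1)^n J^n_A(y_{2,n};x_{1,n}) = \sum_{i=1}^n [\alpha(x_{1,i-1}),[x_i,y_{2,n}],\alpha(x_{i+1,n})] - [[x_{1,n}],\alpha(y_{2,n})]$ and then substitutes $(x_1,x_{2,n}) = (z,x_{2,n})$ (bracketing the result with $\alpha^2(x_{2,n})$) and $(x_1,x_{2,n}) = ([z,x_{2,n}],\alpha(x_{2,n}))$ (with $y_{2,n}\mapsto\alpha(y_{2,n})$, using multiplicativity to extract $\alpha$), which is exactly your two expansions; the same cross term $\pm[[[z,x_{2,n}],\alpha(y_{2,n})],\alpha^2(x_{2,n})]$ appears and cancels upon addition.
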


\begin{proof}
By anti-symmetry we have
\begin{equation}
\label{j}
(-1)^n J^n_A(y_{2,n};x_{1,n}) = \sum_{i=1}^n \,[\alpha(x_{1,i-1}),[x_i,y_{2,n}],\alpha(x_{i+1,n})] - [[x_{1,n}],\alpha(y_{2,n})].
\end{equation}
Write $w$ for $[z,x_{2,n}]$.  Starting from \eqref{j}, if we replace $y_{2,n}$ by $\alpha(y_{2,n})$, $x_1$ by $w$, and $x_{2,n}$ by $\alpha(x_{2,n})$ and use multiplicativity, then we obtain
\begin{equation}
\label{j1}
\begin{split}
(-1)^n J^n_A(\alpha(y_{2,n});w,\alpha(x_{2,n}))
&= \sum_{i=2}^n \alpha\left\{[w,\alpha(x_{2,i-1}),[x_i,y_{2,n}], \alpha(x_{i+1,n})]\right\}\\
&\relphantom{} - [[w,\alpha(x_{2,n})],\alpha^2(y_{2,n})] + [[w,\alpha(y_{2,n})],\alpha^2(x_{2,n})].
\end{split}
\end{equation}
On the other hand, starting from \eqref{j}, if we replace $x_1$ by $z$ and then take the product with $\alpha^2(x_{2,n})$, then we obtain
\begin{equation}
\label{j2}
\begin{split}
(-1)^n[J^n_A(y_{2,n};z,x_{2,n}),\alpha^2(x_{2,n})]
&= \sum_{i=2}^n \,[[\alpha(z),\alpha(x_{2,i-1}),[x_i,y_{2,n}],\alpha(x_{i+1,n})], \alpha^2(x_{2,n})]\\
&\relphantom{} + [[[z,y_{2,n}],\alpha(x_{2,n})],\alpha^2(x_{2,n})] - [[w,\alpha(y_{2,n})],\alpha^2(x_{2,n})].
\end{split}
\end{equation}
The desired equality now follows by adding \eqref{j1} and \eqref{j2} and comparing the result with the definition of the $n$-ary Hom-Maltsevian \eqref{hommaltsevian}.
\end{proof}

In an $n$-ary Hom-Nambu-Lie algebra, the $n$-ary Hom-Jacobian is equal to $0$.  Therefore, using Lemma \ref{lem:jm} we obtain immediately the following result.

\begin{theorem}
\label{thm:liemaltsev}
Let $(L,\bracket,\alpha)$ be a multiplicative $n$-ary Hom-Nambu-Lie algebra.  Then $L$ is also a multiplicative $n$-ary Hom-Maltsev algebra.
\end{theorem}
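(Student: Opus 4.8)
The plan is to obtain Theorem \ref{thm:liemaltsev} as an essentially immediate consequence of Lemma \ref{lem:jm}, which already expresses the $n$-ary Hom-Maltsevian $M^n_A$ as a combination of two instances of the $n$-ary Hom-Jacobian $J^n_A$, valid for any multiplicative $n$-ary Hom-algebra with anti-symmetric product. An $n$-ary Hom-Nambu-Lie algebra $(L,\bracket,\alpha)$ is by Definition \ref{def:homnambulie} exactly an $n$-ary Hom-Nambu algebra whose product is anti-symmetric, so Lemma \ref{lem:jm} applies to $L$ once we invoke the standing hypothesis that $L$ is multiplicative.

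First I would recall that the defining $n$-ary Hom-Nambu identity for $L$ asserts precisely $J^n_L = 0$ as a $(2n-1)$-linear map $L^{\otimes 2n-1} \to L$. Then, for arbitrary $z, x_2,\ldots,x_n, y_2,\ldots,y_n \in L$, I would apply Lemma \ref{lem:jm}:
\begin{align*}
M^n_L(z;x_{2,n};y_{2,n})
&= (-1)^n J^n_L(\alpha(y_{2,n});[z,x_{2,n}],\alpha(x_{2,n}))\\
&\quad + (-1)^n [J^n_L(y_{2,n};z,x_{2,n}),\alpha^2(x_{2,n})].
\end{align*}
Both terms on the right-hand side vanish: the first because $J^n_L$ is identically zero, and the second because the bracket, being multilinear, sends the zero element in its first slot to zero. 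Hence $M^n_L = 0$, i.e., $L$ satisfies the $n$-ary Hom-Maltsev identity. Since $L$ already has an anti-symmetric $n$-ary product by hypothesis, Definition \ref{def:hommaltsev} shows that $L$ is an $n$-ary Hom-Maltsev algebra, and it is multiplicative because it was assumed to be so.

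I do not expect any genuine obstacle here: the substantive content — the algebraic identity linking the Hom-Jacobian and the Hom-Maltsevian through anti-symmetry and multiplicativity — has already been established in the proof of Lemma \ref{lem:jm}. The only point requiring care is bookkeeping: one must make sure the conclusion $M^n_L = 0$ is read as an identity of $(2n-1)$-linear maps, holding for all input tuples, rather than merely pointwise on special arguments, so that "$L$ satisfies the $n$-ary Hom-Maltsev identity'' is literally verified in the sense of Definition \ref{def:hommaltsev}.
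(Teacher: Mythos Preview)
Your proposal is correct and matches the paper's own argument essentially verbatim: the paper also derives Theorem \ref{thm:liemaltsev} immediately from Lemma \ref{lem:jm} by observing that $J^n_L = 0$ forces both terms on the right-hand side to vanish, so $M^n_L = 0$, with anti-symmetry and multiplicativity already given by hypothesis.
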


Note that the converse of Theorem \ref{thm:liemaltsev} is false.  Indeed, as we have shown in Corollary \ref{cor3:hommaltsev} and Example \ref{ex:octonion}, there are multiplicative ternary Hom-Maltsev algebras that are not ternary Hom-Nambu-Lie algebras.


For the reduction result, we need the following observations.

\begin{lemma}
\label{lem:reduction}
Let $(A,\bracket,\alpha=(\alpha_1,\ldots,\alpha_{n-1}))$ be an $n$-ary Hom-algebra with $n \geq 3$.  Suppose $a \in A$ satisfies
\begin{enumerate}
\item
$\alpha_{n-1}(a) = a$ and
\item
$[a,y_{2,n-1},a] = 0$ for all $y_2, \ldots, y_{n-1} \in A$.
\end{enumerate}
Consider the $(n-1)$-ary Hom-algebra $A' = (A,\bracket',\alpha')$ with
\[
[x_{1,n-1}]' = [x_{1,n-1},a] \quad\text{and}\quad
\alpha' = (\alpha_1,\ldots,\alpha_{n-2}).
\]
Then the following statements hold.
\begin{enumerate}
\item
If $\bracket$ is anti-symmetric, then so is $\bracket'$.
\item
If $A$ is multiplicative, then so is $A'$.
\item
The Hom-Maltsevians \eqref{hommaltsevian} of $A$ and $A'$ are related as
\[
M^{n-1}_{A'}(z;x_{2,n-1};y_{2,n-1}) = M^n_A(z;x_{2,n-1},a;y_{2,n-1},a)
\]
for all $z,x_2,\ldots,x_{n-1},y_2,\ldots,y_{n-1} \in A$.
\end{enumerate}
\end{lemma}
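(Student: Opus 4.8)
The plan is to dispatch the three assertions in turn, the first two being formal and the third a substitution into the definition \eqref{hommaltsevian} of the Hom-Maltsevian.

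For (1), observe that $[x_{1,n-1}]' = [x_{1,n-1},a]$ is obtained from $\bracket$ by freezing its last argument at $a$; any permutation $\sigma$ of the first $n-1$ slots extends to a permutation of the $n$ slots of $\bracket$ that fixes the last one and has the same signature, so $\bracket'$ inherits anti-symmetry from $\bracket$. For (2), multiplicativity of $A$ forces $\alpha_1 = \cdots = \alpha_{n-1} = \alpha$, so $\alpha' = (\alpha,\ldots,\alpha)$ already has all its twisting maps equal; and, using assumption~(i) in the form $\alpha(a) = a$, one computes $\alpha[x_{1,n-1}]' = \alpha[x_{1,n-1},a] = [\alpha(x_{1,n-1}),\alpha(a)] = [\alpha(x_{1,n-1}),a] = [\alpha(x_{1,n-1})]'$, which is exactly the multiplicativity of $A'$.

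For (3), I would substitute $x_n = y_n = a$ into each of the four lines of \eqref{hommaltsevian}. Two reductions then do all the work. First, in the two sums $\sum_{i=2}^{n}$, the $i = n$ summand contains the factor $[x_n,y_{2,n}] = [a,y_{2,n-1},a]$, which vanishes by assumption~(ii); hence only the summands with $2 \leq i \leq n-1$ survive, which is precisely the index range occurring in $M^{n-1}_{A'}$. Second, assumption~(i) gives $\alpha_{n-1}(a) = a$ and therefore $\alpha_{n-1}^2(a) = a$, so in each of the strings $\alpha_{i,n-1}(x_{i+1,n})$, $\alpha_{1,n-1}(x_{2,n})$, $\alpha^2_{1,n-1}(x_{2,n})$, $\alpha^2_{1,n-1}(y_{2,n})$ the ``decorated'' copy of $a$ in the last position collapses to $a$ itself. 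After these simplifications every $n$-ary bracket occurring in the expression has $a$ as its final entry, hence equals $\bracket'$ applied to its first $n-1$ entries; and since $\alpha'_j = \alpha_j$ for $1 \leq j \leq n-2$, a line-by-line comparison with the definition of $M^{n-1}_{A'}(z;x_{2,n-1};y_{2,n-1})$ yields the claimed identity.

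The hard part is nothing more than index bookkeeping in step~(3): one must check that the inner brackets $[z,x_{2,n}]$, $[x_i,y_{2,n}]$, $[z,y_{2,n}]$ restrict correctly to $[z,x_{2,n-1}]'$, $[x_i,y_{2,n-1}]'$, $[z,y_{2,n-1}]'$, that the truncated $\alpha$-strings match those appearing in $M^{n-1}_{A'}$, and that the surviving sums line up. There is no analytic or algebraic subtlety — in particular part~(3) uses only assumptions~(i) and~(ii), not multiplicativity of $A$ — but the verification must be written out with some care because of the many indices involved.
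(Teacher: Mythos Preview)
Your proposal is correct and follows exactly the same approach as the paper's own proof: dispatch (1) and (2) as immediate from the definitions together with $\alpha_{n-1}(a)=a$, and for (3) substitute $x_n=y_n=a$ into \eqref{hommaltsevian}, kill the $i=n$ summands via assumption~(ii), and observe that the remainder is precisely $M^{n-1}_{A'}$. You supply a bit more bookkeeping detail (the explicit use of $\alpha_{n-1}^2(a)=a$ and the remark that multiplicativity is not needed for (3)), but the argument is identical in substance.
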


\begin{proof}
The first two assertions are immediate from the definition of $\bracket'$ and the assumption $\alpha_{n-1}(a) = a$.  For the last assertion, substitute $x_n = y_n = a$ in the $n$-ary Hom-Maltsevian $M^n_A(z;x_{2,n};y_{2,n})$ of $A$ \eqref{hommaltsevian}.  In the first two sums in \eqref{hommaltsevian}, the $i=n$ terms both involve
\[
[x_n,y_{2,n}] = [a,y_{2,n-1},a],
\]
which is equal to $0$ by assumption.  The remaining terms give the $(n-1)$-ary Hom-Maltsevian of $A'$.
\end{proof}

Using Lemma \ref{lem:reduction} we now have the following reduction result.

\begin{theorem}
\label{thm:reduction}
Let $(A,\bracket,\alpha=(\alpha_1,\ldots,\alpha_{n-1}))$ be an $n$-ary Hom-Maltsev algebra with $n \geq 3$.  Suppose $a \in A$ satisfies $\alpha_{n-1}(a) = a$. Then $A' = (A,\bracket',\alpha')$, in which
\[
[x_{1,n-1}]' = [x_{1,n-1},a] \quad\text{and}\quad
\alpha' = (\alpha_1,\ldots,\alpha_{n-2}),
\]
is an $(n-1)$-ary Hom-Maltsev algebra.  Moreover, if $A$ is multiplicative, then so is $A'$.
\end{theorem}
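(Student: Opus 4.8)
The plan is to deduce Theorem~\ref{thm:reduction} directly from Lemma~\ref{lem:reduction}, the only genuine work being to verify that the hypotheses of that lemma are met. The element $a \in A$ is assumed to satisfy condition~(1) of Lemma~\ref{lem:reduction}, namely $\alpha_{n-1}(a) = a$. So the first step is to check condition~(2): $[a,y_{2,n-1},a] = 0$ for all $y_2,\ldots,y_{n-1} \in A$. This is where anti-symmetry of the $n$-ary product $\bracket$ enters: a multilinear anti-symmetric map vanishes whenever two of its arguments coincide, and here the first and last slots are both filled by $a$. Hence $[a,y_{2,n-1},a] = 0$ automatically, and condition~(2) holds with no further assumption on $a$.

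With both hypotheses of Lemma~\ref{lem:reduction} verified, I would invoke parts~(1), (2), and~(3) of that lemma in turn. Part~(1) gives that $\bracket'$ is anti-symmetric (since $\bracket$ is, $A$ being an $n$-ary Hom-Maltsev algebra). Part~(3) gives the identity
\[
M^{n-1}_{A'}(z;x_{2,n-1};y_{2,n-1}) = M^n_A(z;x_{2,n-1},a;y_{2,n-1},a)
\]
for all $z,x_2,\ldots,x_{n-1},y_2,\ldots,y_{n-1} \in A$. Since $A$ is an $n$-ary Hom-Maltsev algebra, $M^n_A = 0$ identically, so in particular the right-hand side above vanishes for every choice of arguments; therefore $M^{n-1}_{A'} = 0$, i.e.\ $A'$ satisfies the $(n-1)$-ary Hom-Maltsev identity. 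Together with the anti-symmetry of $\bracket'$, this shows $A' = (A,\bracket',\alpha')$ is an $(n-1)$-ary Hom-Maltsev algebra.

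For the multiplicativity clause, I would simply apply part~(2) of Lemma~\ref{lem:reduction}: if $A$ is multiplicative then so is $A'$. This completes the proof. In truth there is no real obstacle here — the theorem is an immediate corollary of the lemma, and the proof amounts to pointing out that anti-symmetry supplies condition~(2) of the lemma for free and that the Hom-Maltsev identity $M^n_A = 0$ specializes to give $M^{n-1}_{A'} = 0$. The only thing to be careful about is to note explicitly that condition~(2) of Lemma~\ref{lem:reduction} is not an extra hypothesis but a consequence of the standing assumption that $\bracket$ is anti-symmetric.
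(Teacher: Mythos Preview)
Your proposal is correct and matches the paper's own proof exactly: the paper simply says ``This follows from Lemma~\ref{lem:reduction} because $[a,y_{2,n-1},a]=0$ by the anti-symmetry of $\bracket$.'' You have spelled out in detail precisely what the paper compresses into one line.
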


\begin{proof}
This follows from Lemma \ref{lem:reduction} because $[a,y_{2,n-1},a]=0$ by the anti-symmetry of $\bracket$.
\end{proof}

Using Theorem \ref{thm:reduction} repeatedly, we obtain the following reduction result from $n$-ary Hom-Maltsev algebras to $(n-k)$-ary Hom-Maltsev algebras.

\begin{theorem}
\label{thm2:reduction}
Let $(A,\bracket,\alpha=(\alpha_1,\ldots,\alpha_{n-1}))$ be an $n$-ary Hom-Maltsev algebra with $n \geq 3$.  Suppose $a_1,\ldots,a_k \in A$ with $k \leq n-2$ satisfy $\alpha_{n-i}(a_i) = a_i$ for each $i \in \{1,\ldots,k\}$.  Then
\begin{equation}
\label{ak}
A^k = (A,\bracket^{(k)},(\alpha_1,\ldots,\alpha_{n-k-1})),
\end{equation}
is an $(n-k)$-ary Hom-Maltsev algebra, where
\[
[x_{1,n-k}]^{(k)} = [x_{1,n-k},a_k,a_{k-1},\ldots,a_1]
\]
for all $x_1,\ldots,x_{n-k} \in A$.  Moreover, if $A$ is multiplicative, then so is $A^k$.
\end{theorem}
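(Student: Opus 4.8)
The plan is to prove Theorem \ref{thm2:reduction} by a straightforward induction on $k$, using Theorem \ref{thm:reduction} as the single-step reduction and being careful to track which twisting map gets dropped and which fixed point is being used at each stage.

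\medskip

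First I would set up the induction. For $k=0$ there is nothing to prove: $A^0 = A$. Assume the statement holds for some $k-1$ with $1 \le k \le n-2$, so that
\[
A^{k-1} = (A, \bracket^{(k-1)}, (\alpha_1,\ldots,\alpha_{n-k})),
\qquad
[x_{1,n-k+1}]^{(k-1)} = [x_{1,n-k+1}, a_{k-1}, a_{k-2}, \ldots, a_1],
\]
is an $(n-k+1)$-ary Hom-Maltsev algebra, multiplicative whenever $A$ is. The key observation is that $A^{k-1}$ is an $m$-ary Hom-algebra with $m = n-k+1 \ge 3$ (since $k \le n-2$), so Theorem \ref{thm:reduction} applies to it, provided we can exhibit a suitable fixed point of its last twisting map. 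The last twisting map of $A^{k-1}$ is $\alpha_{m-1} = \alpha_{n-k}$, and the hypothesis $\alpha_{n-k}(a_k) = a_k$ is exactly the condition $\alpha_{n-i}(a_i) = a_i$ for $i = k$. Hence $a_k$ is a fixed point of the last twisting map of $A^{k-1}$, and Theorem \ref{thm:reduction} produces an $(m-1)$-ary $=(n-k)$-ary Hom-Maltsev algebra
\[
(A^{k-1})' = \bigl(A, \bracket'', (\alpha_1,\ldots,\alpha_{m-2})\bigr),
\qquad
[x_{1,n-k}]'' = [x_{1,n-k}, a_k]^{(k-1)},
\]
which is multiplicative if $A^{k-1}$ is, hence if $A$ is.

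\medskip

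It then remains to identify $(A^{k-1})'$ with $A^k$. The twisting maps match: $m-2 = n-k-1$, so both have twisting maps $(\alpha_1,\ldots,\alpha_{n-k-1})$. For the product, unwind the definitions:
\[
[x_{1,n-k}]'' = [x_{1,n-k}, a_k]^{(k-1)}
= [x_{1,n-k}, a_k, a_{k-1}, a_{k-2}, \ldots, a_1]
= [x_{1,n-k}]^{(k)},
\]
which is precisely the formula defining $\bracket^{(k)}$ in the statement. So $(A^{k-1})' = A^k$, completing the induction.

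\medskip

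I do not anticipate a serious obstacle here; the content of the theorem is entirely in Theorem \ref{thm:reduction} (and hence in Lemma \ref{lem:reduction}), and the only thing requiring genuine care is the bookkeeping: checking that the arity stays $\ge 3$ throughout the induction (which is what the constraint $k \le n-2$ guarantees — at the final step the ambient arity is $n-k+1 \ge 3$), and checking that at stage $i$ the relevant hypothesis $\alpha_{n-i}(a_i) = a_i$ is indeed the hypothesis "$a_i$ is fixed by the last twisting map of $A^{i-1}$". One should also note that the anti-symmetry hypothesis needed to invoke Theorem \ref{thm:reduction} is inherited automatically: $\bracket^{(k-1)}$ is anti-symmetric because $A^{k-1}$ is a Hom-Maltsev algebra, so $[a_k, y_{2,n-k}, a_k]^{(k-1)} = 0$ and Lemma \ref{lem:reduction} applies with no further assumptions.

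\begin{proof}
We induct on $k$, the case $k=0$ being trivial since $A^0 = A$. Suppose the result holds for $k-1$, where $1 \le k \le n-2$; that is,
\[
A^{k-1} = \bigl(A, \bracket^{(k-1)}, (\alpha_1,\ldots,\alpha_{n-k})\bigr)
\]
is an $(n-k+1)$-ary Hom-Maltsev algebra, which is multiplicative if $A$ is, and
\[
[x_{1,n-k+1}]^{(k-1)} = [x_{1,n-k+1}, a_{k-1}, \ldots, a_1]
\]
for all $x_j \in A$. Since $k \le n-2$, the arity $n-k+1$ of $A^{k-1}$ is at least $3$, so Theorem \ref{thm:reduction} applies to $A^{k-1}$. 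The last twisting map of $A^{k-1}$ is $\alpha_{n-k}$, and by hypothesis $\alpha_{n-k}(a_k) = a_k$. Hence, by Theorem \ref{thm:reduction}, the $(n-k)$-ary Hom-algebra $(A^{k-1})' = (A, \bracket^{\prime}, (\alpha_1,\ldots,\alpha_{n-k-1}))$ with
\[
[x_{1,n-k}]^{\prime} = [x_{1,n-k}, a_k]^{(k-1)}
\]
is an $(n-k)$-ary Hom-Maltsev algebra, which is multiplicative if $A^{k-1}$ is, hence if $A$ is. Finally, unwinding the definition of $\bracket^{(k-1)}$,
\[
[x_{1,n-k}]^{\prime} = [x_{1,n-k}, a_k]^{(k-1)} = [x_{1,n-k}, a_k, a_{k-1}, \ldots, a_1] = [x_{1,n-k}]^{(k)},
\]
so $(A^{k-1})' = A^k$. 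This completes the induction.
\end{proof}
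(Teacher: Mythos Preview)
Your proof is correct and is exactly the argument the paper intends: the paper simply says the result follows by ``using Theorem \ref{thm:reduction} repeatedly,'' and you have written out that induction carefully, tracking the arity bound $n-k+1\ge 3$ and the fact that the last twisting map of $A^{k-1}$ is $\alpha_{n-k}$, whose fixed point is $a_k$.
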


The condition $\alpha_{n-i}(a_i) = a_i$ holds automatically if $\alpha_{n-i} = Id$.  Therefore, we have the following special case of Theorem \ref{thm2:reduction}.

\begin{corollary}
\label{cor1:reduction}
Let $(A,\bracket,\alpha=(\alpha_1,\ldots,\alpha_{n-1}))$ be an $n$-ary Hom-Maltsev algebra with $n \geq 3$ such that $\alpha_{n-i} = Id$ for $i \in \{1,\ldots,k\}$ for some $k \leq n-2$.  Then for any elements $a_1,\ldots,a_k \in A$, $A^k$ in \eqref{ak} is an $(n-k)$-ary Hom-Maltsev algebra.
\end{corollary}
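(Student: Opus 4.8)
The plan is to deduce Corollary \ref{cor1:reduction} from Theorem \ref{thm2:reduction} by verifying its single hypothesis. Theorem \ref{thm2:reduction} requires elements $a_1,\ldots,a_k \in A$ satisfying $\alpha_{n-i}(a_i) = a_i$ for each $i \in \{1,\ldots,k\}$, and then produces the $(n-k)$-ary Hom-Maltsev algebra $A^k$ of \eqref{ak}. Here we are given the stronger structural hypothesis that $\alpha_{n-i} = Id$ for all $i \in \{1,\ldots,k\}$, so the first step is simply to observe that for \emph{any} choice of elements $a_1,\ldots,a_k \in A$, we have $\alpha_{n-i}(a_i) = Id(a_i) = a_i$, so the fixed-point condition is automatic. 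Hence Theorem \ref{thm2:reduction} applies verbatim.

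Once that observation is made, the conclusion is immediate: $A^k = (A,\bracket^{(k)},(\alpha_1,\ldots,\alpha_{n-k-1}))$ with $[x_{1,n-k}]^{(k)} = [x_{1,n-k},a_k,a_{k-1},\ldots,a_1]$ is an $(n-k)$-ary Hom-Maltsev algebra by Theorem \ref{thm2:reduction}, and multiplicativity of $A^k$ follows from multiplicativity of $A$ by the same theorem. So the proof is essentially one line invoking the already-established Theorem \ref{thm2:reduction}. There is no real obstacle here — the work was all done in proving Theorem \ref{thm:reduction} (via Lemma \ref{lem:reduction}) and iterating it; this corollary just records the cleanest hypothesis under which the iteration runs without any compatibility assumptions between the chosen elements and the twisting maps.

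I would write it as follows.

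\begin{proof}
Since $\alpha_{n-i} = Id$ for each $i \in \{1,\ldots,k\}$, any elements $a_1,\ldots,a_k \in A$ satisfy $\alpha_{n-i}(a_i) = a_i$ for $i \in \{1,\ldots,k\}$. The assertion is therefore a special case of Theorem \ref{thm2:reduction}.
\end{proof}
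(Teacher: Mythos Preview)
Your proof is correct and matches the paper's approach exactly: the paper states the corollary as a special case of Theorem \ref{thm2:reduction}, noting just before it that ``the condition $\alpha_{n-i}(a_i) = a_i$ holds automatically if $\alpha_{n-i} = Id$.'' There is nothing to add.
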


Combining Corollary \ref{cor3:hommaltsev} and Theorem \ref{thm:reduction}, we have the following method of constructing a Hom-Maltsev algebra from a composition algebra.

\begin{corollary}
\label{cor2:reduction}
Let $(A,\ast)$ be a composition algebra with involution $\ast$ and $\alpha \colon A \to A$ be an algebra morphism that commutes with $\ast$ and preserves the multiplicative identity of $A$.  Suppose $a \in A$ satisfies $\alpha(a) = a$.  Then
\[
M(A)_\alpha' = (A,[,]_\alpha',\alpha)
\]
is a multiplicative binary Hom-Maltsev algebra, where
\[
[x,y]_\alpha' = \left\{\alpha(xy^*) - \langle x,y\rangle\right\}a - \langle y,a\rangle \alpha(x) + \langle x,a\rangle\alpha(y).
\]
for all $x,y \in A$ and $\langle x,y\rangle$ is the symmetric bilinear form in \eqref{bform}.
\end{corollary}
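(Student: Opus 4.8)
The plan is to obtain Corollary \ref{cor2:reduction} as the $n=3$ instance of the reduction procedure from Theorem \ref{thm:reduction}, applied to the ternary Hom-Maltsev algebra produced by Corollary \ref{cor3:hommaltsev}. First I would invoke Corollary \ref{cor3:hommaltsev}: under the stated hypotheses on $\alpha$, the Hom-algebra $M(A)_\alpha = (A,[,,]_\alpha,\alpha)$ is a multiplicative ternary Hom-Maltsev algebra, with
\[
[xyz]_\alpha = \alpha((xy^*)z) - \langle y,z\rangle \alpha(x) + \langle x,z\rangle\alpha(y) - \langle x,y\rangle \alpha(z).
\]
Since $a \in A$ satisfies $\alpha(a)=a$, which is exactly the condition $\alpha_{n-1}(a)=a$ with $n=3$ and $\alpha_2 = \alpha$, Theorem \ref{thm:reduction} applies and tells us that $M(A)_\alpha' = (A,[,]_\alpha',\alpha)$ is a (multiplicative) binary Hom-Maltsev algebra, where the binary bracket is given by $[x,y]_\alpha' = [x,y,a]_\alpha$.

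The only remaining task is to verify that $[x,y,a]_\alpha$ agrees with the displayed formula for $[x,y]_\alpha'$. Substituting $z=a$ into the ternary bracket gives
\[
[x,y,a]_\alpha = \alpha((xy^*)a) - \langle y,a\rangle\alpha(x) + \langle x,a\rangle\alpha(y) - \langle x,y\rangle\alpha(a).
\]
Here $\langle x,y\rangle \in \bk 1$ is a scalar multiple of the identity, and since $\alpha$ preserves the multiplicative identity we have $\langle x,y\rangle\alpha(a) = \langle x,y\rangle a$; likewise, because $\alpha$ is an algebra morphism, $\alpha((xy^*)a) = \alpha(xy^*)\alpha(a) = \alpha(xy^*)a$. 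Grouping the first and last terms as $\{\alpha(xy^*) - \langle x,y\rangle\}a$ then yields precisely the asserted formula for $[x,y]_\alpha'$. Multiplicativity is inherited from $M(A)_\alpha$ via the last clause of Theorem \ref{thm:reduction}.

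There is essentially no obstacle here: the substance of the corollary was already carried by Corollary \ref{cor3:hommaltsev} and Theorem \ref{thm:reduction}, and the present statement is a matter of specializing to $n=3$, $z=a$, and simplifying using that $\alpha$ fixes $a$, fixes $1$, and is multiplicative. The one point that deserves a sentence of care is the identification $\langle x,y\rangle \alpha(a) = \langle x,y\rangle a$ and $\alpha((xy^*)a) = \alpha(xy^*)a$, i.e. that one may ``pull $a$ out of $\alpha$'' precisely because $\alpha(a) = a$; this is where the hypothesis $\alpha(a)=a$ is used a second time, beyond its role in licensing the reduction theorem.
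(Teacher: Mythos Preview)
Your proposal is correct and follows exactly the route the paper indicates: the corollary is stated as an immediate consequence of combining Corollary \ref{cor3:hommaltsev} with Theorem \ref{thm:reduction} (with $n=3$), and your explicit verification of the formula for $[x,y]_\alpha'$ by substituting $z=a$ fills in the only detail left implicit. One tiny remark: the identity $\langle x,y\rangle\alpha(a)=\langle x,y\rangle a$ follows directly from $\alpha(a)=a$, so you need not invoke preservation of the multiplicative identity there.
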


With $a$ being the multiplicative identity $1$ of $A$, we have the following special case of Corollary \ref{cor2:reduction}.

\begin{corollary}
\label{cor3:reduction}
Let $(A,\ast)$ be a composition algebra with involution $\ast$ and $\alpha \colon A \to A$ be an algebra morphism that commutes with $\ast$ and preserves the multiplicative identity $1$ of $A$. Then
\[
M(A)_\alpha' = (A,[,]_\alpha',\alpha)
\]
is a multiplicative binary Hom-Maltsev algebra, where
\[
[x,y]_\alpha' = \alpha(xy^*) - \langle x,y\rangle - \langle y,1\rangle \alpha(x) + \langle x,1\rangle\alpha(y).
\]
for all $x,y \in A$.
\end{corollary}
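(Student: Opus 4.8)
The plan is to obtain this statement as the special case $a = 1$ of Corollary~\ref{cor2:reduction}, so that the whole argument reduces to checking one hypothesis and performing a trivial simplification of the bracket.

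First I would verify that $a = 1$ satisfies the requirement $\alpha(a) = a$ imposed in Corollary~\ref{cor2:reduction}. This is immediate: $\alpha$ preserves the multiplicative identity of $A$ by hypothesis, so $\alpha(1) = 1$. Hence Corollary~\ref{cor2:reduction} applies with $a = 1$ and yields a multiplicative binary Hom-Maltsev algebra $M(A)_\alpha' = (A,[,]_\alpha',\alpha)$ with
\[
[x,y]_\alpha' = \left\{\alpha(xy^*) - \langle x,y\rangle\right\}\cdot 1 - \langle y,1\rangle\,\alpha(x) + \langle x,1\rangle\,\alpha(y)
\]
for all $x,y \in A$.

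It then remains only to rewrite the first summand. Since $1$ is the multiplicative identity of $A$, we have $\alpha(xy^*)\cdot 1 = \alpha(xy^*)$; and since $\langle x,y\rangle \in \bk 1$, the product $\langle x,y\rangle \cdot 1$ equals $\langle x,y\rangle$ under the standing identification of $\bk 1$ with $\bk$. Therefore $\left\{\alpha(xy^*) - \langle x,y\rangle\right\}\cdot 1 = \alpha(xy^*) - \langle x,y\rangle$, and the bracket takes exactly the asserted form
\[
[x,y]_\alpha' = \alpha(xy^*) - \langle x,y\rangle - \langle y,1\rangle\,\alpha(x) + \langle x,1\rangle\,\alpha(y),
\]
which would complete the proof. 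Because the result is a pure specialization, there is no real obstacle here; the only points worth a line are the equality $\alpha(1) = 1$ (needed to invoke Corollary~\ref{cor2:reduction}) and the harmless identification $\langle x,y\rangle \cdot 1 = \langle x,y\rangle$ in $\bk 1$. One could alternatively re-derive the statement by combining Corollary~\ref{cor3:hommaltsev} with the $n=3$ case of Theorem~\ref{thm:reduction} and then setting $a = 1$, but going through Corollary~\ref{cor2:reduction} is the shortest route.
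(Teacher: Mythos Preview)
Your proposal is correct and matches the paper's approach exactly: the paper also obtains this corollary simply as the special case $a = 1$ of Corollary~\ref{cor2:reduction}, with no further argument given. Your verification that $\alpha(1)=1$ and your simplification of the bracket just make explicit what the paper leaves to the reader.
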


\begin{example}
\label{ex2:oct}
Recall from Example \ref{ex:octonion} the octonion algebra $\oct$, which is an eight-dimensional composition algebra with involution given by conjugation.  Let $\alpha \colon \oct \to \oct$ be any algebra morphism that commutes with conjugation and preserves the multiplicative identity $e_0$, such as the map in \eqref{octaut}.  The multiplicative binary Hom-Maltsev algebra $M(\oct)'_\alpha = (\oct,[,]'_\alpha,\alpha)$ in Corollary \ref{cor3:reduction} is given by
\[
\begin{split}
[x,y]'_\alpha &= \alpha(x\ybar) - \langle x,y\rangle - \Real(y)\alpha(x) + \Real(x)\alpha(y),
\end{split}
\]
where
\[
\Real(x) = \langle x,e_0\rangle = b_0e_0
\]
if $x = \sum_{i=0}^7 b_ie_i$ with $b_i \in \bk$.
\qed
\end{example}


\end{document}